 \newtheorem{theorem}{Theorem}[section]
 \newtheorem{lemma}[theorem]{Lemma}
 \newtheorem{proposition}[theorem]{Proposition}
\theoremstyle{definition}
 \newtheorem{remark}[theorem]{Remark}
\begin{document}
\title[Discretisation error and local times]{Discretisation error for stochastic integrals with respect to the fractional Brownian motion with discontinuous integrands and local times}

\author[V. Garino]{Valentin Garino}
\address{Uppsala University, Department of Mathematics, 751 06, Uppsala, Sweden}
\email{valentin.garino@math.uu.se}
\author[L. Viitasaari]{Lauri Viitasaari}
\address{Uppsala University, Department of Mathematics, 751 06, Uppsala, Sweden}
\email{lauri.viitasaari@math.uu.se}

\date{\today}

\begin{abstract}
We consider equidistant Riemann approximations of stochastic integrals $\int_0^T f(B^H_s)dB^H_s$ with respect to the fractional Brownian motion with $H>\frac12$, where $f$ is an arbitrary function of locally bounded variation, hence possibly possessing discontinuities. We prove that properly normalised approximation error converge in the $L^2$-topology to a functional of the local time, and we provide rate of convergence for this approximation. As such, our results complements some recent advances on the topic as well as provides new methods for simulation of local times.
\end{abstract}

\maketitle

\medskip\noindent
{\bf Mathematics Subject Classifications (2020)}: 
60G15, 60G22, 60H05, 26A33

\medskip\noindent
{\bf Keywords:} 
approximation of stochastic integral,
discontinuous integrands,
sharp rate of convergence,
local time,
fractional Brownian motions

\allowdisplaybreaks

\section{Introduction}
Rate of convergence for approximation error arising in discretisations of stochastic integrals have numerous applications and is hence an interesting subject of research. Indeed, such approximation error bounds allow to quantify how accurate simulations are or derive finite sample bounds for parameter estimates in various stochastic models, when only discrete observations are available.

The topic has been a topic of active research in is already relatively well-understood in the context of It\^o integrals, see e.g. \cite{Kloeden-Platen}. Later on the focus shifted to the case of fractional Brownian motion and related processes. In this case, the integrals can be understood in pathwise sense, by means of Young integration \cite{young1936inequality} for $H>\frac12$ or by means of rough paths \cite{Friz-Hairer} for $\frac14<H<\frac12$. The rate of convergence in the Riemann approximation of $\int_0^T f(B^H_u)dB^H_u$ in the case of fractional Brownian motion with $H>\frac12$ and (sufficiently) smooth $f$ is known to be proportional $n^{1-2H}$, see \cite{garino2022asymptotic} and refences therein. However, when $f$ possess discontinuities, the problem is more subtle as the concept of integration requires some additional work, see e.g. \cite{chen2019pathwise, HTV1,HTV2}, and it is not clear whether one obtains the same rate $n^{1-2H}$ in the case of discontinuous integrand. Indeed, even in the Brownian motion case, introducing discontinuities to $f$ reduces the rate from $n^{-1/2}$ to $n^{-1/4}$, see \cite{azmoodeh2015rate} where it was also proved that in the fBm case one has an upper bound proportional to $n^{1/2-H}$ which would again be half of the rate $n^{1-2H}$ of the smooth case. On the other hand, later on in \cite{chen2019pathwise}, surprisingly the rate was proved to be proportional to $n^{1-2H}$, that is in contrast to the expectations arising from the Brownian case. Finally, we mention a recent preprint \cite{azmoodeh2022sharp}, where the authors studied the sharp rate of convergence for the expectation in the presence of discontinuities, verifying that indeed the rate $n^{1-2H}$ is the correct one also in the presence of jumps. 

In this article we continue the work \cite{azmoodeh2022sharp} by proving stronger mode of convergence, together with the second order rate of convergence. That is, we prove that properly $n^{1-2H}$-normalised error convergences towards a (functional of) local time of the fractional Brownian motion, with a rate proportional to $n^{(H-1)/2}$. As a by-product, our result allows for simulations of the local time from a sample path of the process $B^H$. Obviously, we recover known results for more regular integrand $f$ as a particular case. Our proof is based on a technical estimate how expectations of functions of increments of $B^H$ can be approximated by using division between large and small increments, and this result may have independent interest with applications to other problems as well, see Proposition \ref{mainBound}.  

In order to present our main result, recall that when a real valued function is convex, its second derivative in the distributional sense can be identified to a positive and $\sigma$-finite Radon measure. 
The following result is the main finding of the present paper.

\begin{theorem}\label{main}
Let $B=(B_1,B_2)$ be a two dimensional fractional Brownian motion (with $B^1$ and $B^2$ independant) with Hurst index $H\in \left(\frac{1}{2},1\right)$. 
Let $f$ be the left derivative of a real valued convex function. 
For $(i,j)\in\llbracket 1,2\rrbracket^2$, $n\in\mathbb{N}^*$ and $t\in[0,T]$, let

$$S^{i,j}_n(t):=n^{2H-1}\left(\int_0^tf(B^i_s)dB^j_s-\sum_{k=0}^{\lfloor nt\rfloor}f\left(B^i_{\frac{k}{n}}\right)\left(B^j_{\frac{k+1}{n}\wedge t}-B^j_\frac{k}{n}\right)\right).$$

Then, there is a constant $P>0$ depending only on $T,H$ such that if the distributional derivative $\mu=f'$ verifies the growth condition

\begin{equation}\label{growthC}\int_{\mathbb{R}}e^{-P\frac{a^2}{2}}d\mu(a)<\infty,\end{equation}

then 

\begin{equation}\label{mainEq}
\left\|S_n^{i,j}(t)-\delta_{ij}\int_{\mathbb{R}}L^i_t(a)d\mu(a)\right\|_{L^2(\Omega)}\leq C\int_{\mathbb{R}}e^{-P\frac{a^2}{2}}d\mu(a)n^{-\frac{1-H}{2}},
\end{equation}

where $L^i$ is the local time of $B^i$, $\delta_{ij}$ is the Kronecker symbol and $C=C(H,T)$ is a constant depending only on $H,T$.
\end{theorem}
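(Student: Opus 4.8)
The plan is to reduce everything to a single-coordinate statement and then to an increment-by-increment analysis. First I would note that the case $i \neq j$ is genuinely different from $i=j$: when $i\neq j$, independence of $B^i$ and $B^j$ means $f(B^i_{k/n})$ is independent of the increment $B^j_{(k+1)/n}-B^j_{k/n}$, so each summand is (conditionally) centred, and by a direct second-moment computation conditioning on $B^i$ one gets that $\|S_n^{i,j}(t)\|_{L^2}$ is already of order $n^{2H-1}\cdot n^{1/2-H}\cdot\text{(something)}$, which is even smaller than the claimed bound; so the $\delta_{ij}=0$ case should follow from routine estimates plus the growth condition \eqref{growthC} to control $\int_{\mathbb R} L^i_t(a)\,d\mu(a)$-type quantities. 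The substance is the diagonal case $i=j$, so from now on fix one fBm $B=B^H$ and write $S_n(t)$ for $S_n^{i,i}(t)$.

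For the diagonal case, the first step is to pass from $f$ to $\mu$. Using the representation $f(x)=f(0)+\int_{\mathbb R}(\ind{a<x}-\ind{a<0})\,d\mu(a)$ (valid since $f$ is the left derivative of a convex function), and the analogous occupation-time / Tanaka-type identity $\int_0^t f(B_s)\,dB_s = (\text{a.c. part in }f(0),\ f'\text{ linear piece}) + \int_{\mathbb R} \big(\text{something involving }L_t(a)\big)\,d\mu(a)$, I would try to write both the continuous integral and its Riemann sum as integrals against $d\mu(a)$ of explicit functionals, so that
\[
S_n(t) - \int_{\mathbb R} L_t(a)\,d\mu(a) = \int_{\mathbb R} \Big(n^{2H-1}\big[\Phi_t(a) - \Phi_t^{(n)}(a)\big] - L_t(a)\Big)\,d\mu(a),
\]
where $\Phi_t(a)=\int_0^t \ind{B_s>a}\,dB_s$ and $\Phi_t^{(n)}(a)$ is its Riemann sum. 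By Minkowski's integral inequality, $\|S_n(t)-\int L_t\,d\mu\|_{L^2}\le \int_{\mathbb R}\big\|n^{2H-1}[\Phi_t(a)-\Phi_t^{(n)}(a)] - L_t(a)\big\|_{L^2}\,d\mu(a)$, so it suffices to prove the single-level bound
\[
\Big\| n^{2H-1}\big(\Phi_t(a)-\Phi_t^{(n)}(a)\big) - L_t(a)\Big\|_{L^2(\Omega)} \le C\, e^{-Pa^2/2}\, n^{-(1-H)/2}
\]
uniformly in $a$, which after integrating against $d\mu$ gives exactly \eqref{mainEq} (the exponential factor is what makes the growth condition \eqref{growthC} the natural hypothesis — it comes from Gaussian tail bounds on $\PP(B_s \approx a)$ for $s$ bounded below, together with the small-$s$ contribution).

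The heart of the matter is this single-level estimate, and here is where I expect to use Proposition \ref{mainBound}. The error $\Phi_t(a)-\Phi_t^{(n)}(a) = \sum_k \int_{k/n}^{(k+1)/n}\big(\ind{B_s>a}-\ind{B_{k/n}>a}\big)\,dB_s$ is a sum over intervals, and on each interval the integrand is nonzero only when $B$ crosses level $a$ during $[k/n,(k+1)/n]$. The idea is to split, on each block, the increment of $B$ into a "large" part (the increments over the endpoints of a coarser grid, or a conditional mean) and a "small" fluctuation, and to Taylor/Gaussian-expand the expectation of the relevant crossing functional; Proposition \ref{mainBound} is exactly the tool that quantifies the error in replacing $\EE[g(\text{increments})]$ by its leading term under such a large/small decomposition. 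Summing the leading terms over $k$ should produce a Riemann sum converging to $L_t(a)$ (this is the occupation-density heuristic: $n^{2H-1}\sum_k \ind{B \text{ crosses } a \text{ in block } k}\cdot(\text{typical increment size}) \to L_t(a)$), while the remainder, controlled by Proposition \ref{mainBound}, carries the extra factor $n^{-(1-H)/2}$ and the Gaussian weight $e^{-Pa^2/2}$. The main obstacle is carrying out this large/small split cleanly in the presence of the long-range dependence of fBm — the increments $B_{(k+1)/n}-B_{k/n}$ are not independent across $k$, so the covariance structure must be handled via the stationarity and the decay of the fBm increment correlations (of order $|k-l|^{2H-2}$), and the bookkeeping to show the cross-terms in $\|\cdot\|_{L^2}^2$ do not destroy the rate is the delicate part; this is presumably precisely what Proposition \ref{mainBound} is engineered to absorb.
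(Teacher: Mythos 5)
Your overall architecture matches the paper's: reduce to $f=\mathbb{I}_{\cdot>a}$, prove a single-level bound carrying a Gaussian weight $e^{-Pa^2/2}$, and integrate against $d\mu$ (the paper uses Jensen where you use Minkowski; both work). The off-diagonal case is also handled essentially as you describe, although ``routine estimates'' hides real work: the pathwise integral with a discontinuous integrand must first be represented via fractional (Weyl--Marchaud) derivatives before the conditional second moment can be computed, and the required bound on $\mathbb{E}\bigl[|\mathbb{I}_{B_{y_1}>a}-\mathbb{I}_{B_{s^1}>a}|\,|\mathbb{I}_{B_{y_2}>a}-\mathbb{I}_{B_{s^2}>a}|\bigr]$ is precisely part (1) of Proposition \ref{mainBound}, not a generic Gaussian estimate.

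The genuine gap is in the diagonal case. First, you keep the error in the form $\sum_k\int_{k/n}^{(k+1)/n}(\mathbb{I}_{B_s>a}-\mathbb{I}_{B_{k/n}>a})\,dB_s$; the paper's first move is to apply the pathwise change-of-variables formula \eqref{ChangeVar} to $x\mapsto|x-a|$, which collapses $S_n(t)$ into the purely discrete, nonnegative functional $n^{2H-1}\sum_k|B_{\frac{k+1}{n}\wedge t}-a|\,\mathbb{I}_{\{\mathrm{sgn}(B_{\frac{k+1}{n}\wedge t}-a)\,\mathrm{sgn}(B_{k/n}-a)<0\}}$, so that no stochastic integral remains. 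Second, and more importantly, your plan to ``sum the leading terms over $k$ to get a Riemann sum converging to $L_t(a)$, with the remainder controlled by Proposition \ref{mainBound}'' cannot produce an $L^2(\Omega)$ bound: the leading term of each block expectation is a deterministic number, and per-block expectation asymptotics alone say nothing about the $L^2$ distance between the random variables $S_n(t)$ and $\tfrac12 L_t(a)$. The mechanism the paper uses is the second-moment method: expand $\|S_n(t)-\tfrac12 L_t(a)\|_{L^2}^2=\mathbb{E}[S_n(t)^2]-\mathbb{E}[S_n(t)L_t(a)]+\tfrac14\mathbb{E}[L_t(a)^2]$ and prove, with rate $n^{H-1}$, that $\mathbb{E}[S_n(t)^2]\to\tfrac14\mathbb{E}[L_t(a)^2]$ and $\mathbb{E}[S_n(t)L_t(a)]\to\tfrac12\mathbb{E}[L_t(a)^2]$; this is where Proposition \ref{mainBound}(2) and the mollified local time $L_t^\epsilon$ enter, the ``small'' increments being the grid steps of length $1/n$ and the ``large'' ones the separations between distinct blocks (or between a block and the time variable in $L^\epsilon$). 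Your proposal contains no substitute for this step. Finally, note the normalisation: the limit of $n^{2H-1}\bigl(\Phi_t(a)-\Phi_t^{(n)}(a)\bigr)$ is $\tfrac12 L_t(a)$, not $L_t(a)$; the factor $2$ in $f(x)-f(y)=2\int_{\mathbb{R}}(\mathbb{I}_{x>a}-\mathbb{I}_{y>a})\,d\mu(a)$ is what restores \eqref{mainEq}.
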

\begin{remark}
\label{remark:BV}
It is immediate from our proof (see Remark \ref{remark:BV-proof}) that the result remains valid for any function $f\in BV_{loc}$, where $BV_{loc}$ denotes the space of functions that are of locally bounded variation. In this case the distributional derivative $\mu = f'$ is a signed measure with total variation measure $d|\mu|$, and the growth condition \eqref{growthC} is replaced by 
$$
\int_{\mathbb{R}}e^{-P\frac{a^2}{2}}d|\mu|(a)<\infty,
$$
and the integral on the right hand side of \eqref{mainEq} is with respect to $d|\mu|$.
\end{remark}

A particular case of the Theorem \ref{main} leads to the following result:

\begin{proposition}\label{particular}
Let $B$ be a one dimensional fractional Brownian motion of Hurst index $H>\frac{1}{2}$.
For all $a\in\mathbb{R}$, we have 

\begin{eqnarray}\label{particularEq}
&&\left\|2n^{2H-1}\sum_{k=0}^{\lfloor nt\rfloor}\left|B_{\frac{k+1}{n}\wedge t}\right|\mathbb{I}_{\left\{sgn\left(B_{\frac{k+1}{n}\wedge t}-a\right)sgn\left(B_{\frac{k}{n}}-a\right)<0\right\}}-L_t(a)\right\|_{L^2(\Omega)}\notag\\
&\leq& C(H,T)e^{-P\frac{a^2}{2}}n^{-\frac{1-H}{2}}\notag\\
\end{eqnarray}

\end{proposition}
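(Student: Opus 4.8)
The plan is to derive Proposition \ref{particular} as a direct specialisation of Theorem \ref{main} applied to a single, carefully chosen convex function. Recall that the function $x\mapsto 2|x-a|$ is convex with left derivative $f(x)=2\,\mathrm{sgn}(x-a)$ (using the left-continuous sign convention, so $f(x)=-2$ for $x\le a$ and $f(x)=2$ for $x>a$), and its distributional second derivative is $\mu = f' = 4\,\delta_a$, the Dirac mass at $a$ scaled by $4$. Wait---let me recompute: $\frac{d}{dx}\,\mathrm{sgn}(x-a) = 2\delta_a$, so $\mu = f' = 2\cdot 2\,\delta_a = 4\delta_a$. Then $\int_{\mathbb{R}} e^{-Pa'^2/2}\,d\mu(a') = 4e^{-Pa^2/2} < \infty$, so the growth condition \eqref{growthC} is satisfied for every $a\in\mathbb R$, and moreover $\int_{\mathbb R} L^1_t(a')\,d\mu(a') = 4 L_t(a)$. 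Hmm, this produces a factor $4$ where the statement has $2$; I would reconcile the normalisation by instead taking $f(x) = \mathrm{sgn}(x-a)$ directly, i.e. the left derivative of the convex function $x\mapsto |x-a|$, so that $\mu = 2\delta_a$, $\int L^1_t\,d\mu = 2L_t(a)$, and the growth integral equals $2e^{-Pa^2/2}$.

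Next I would rewrite the Riemann sum appearing in $S^{1,1}_n(t)$ for this $f$. With $f(x)=\mathrm{sgn}(x-a)$, the telescoping identity $\int_0^t \mathrm{sgn}(B_s-a)\,dB_s$ can be handled via the Tanaka-type formula that is implicitly behind Theorem \ref{main}, but more to the point, the discrete sum $\sum_{k=0}^{\lfloor nt\rfloor} f(B_{k/n})(B_{(k+1)/n\wedge t} - B_{k/n})$ together with the continuous integral reorganises, after an Abel summation / telescoping argument, into a sum over those indices $k$ where $B_{k/n}-a$ and $B_{(k+1)/n\wedge t}-a$ have opposite signs, each such crossing contributing a term proportional to $|B_{(k+1)/n\wedge t}-a|$. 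Concretely, on a crossing interval one has $f(B_{k/n})(B_{(k+1)/n\wedge t}-B_{k/n})$ differing from the "exact" increment of $|\cdot-a|$ by exactly $-2|B_{(k+1)/n\wedge t}-a|\,\mathbb I_{\{\text{crossing}\}}$ (the jump of $|\cdot-a|$'s antiderivative picked up when the path steps across level $a$), which is precisely the quantity inside the $L^2$ norm in \eqref{particularEq} up to the factor $2n^{2H-1}$. I would carry out this elementary rewriting carefully, tracking the endpoint term at $k=\lfloor nt\rfloor$ and the $\wedge t$ truncation.

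The main obstacle---really the only non-routine point---is matching the combinatorial/algebraic bookkeeping so that the left-hand side of \eqref{particularEq} becomes \emph{exactly} $\|S^{1,1}_n(t) - \int_{\mathbb R} L^1_t\,d\mu\|_{L^2(\Omega)}$ with $\mu=2\delta_a$, including getting the constant $2$ and the indicator $\mathbb I_{\{\mathrm{sgn}(B_{(k+1)/n\wedge t}-a)\,\mathrm{sgn}(B_{k/n}-a)<0\}}$ to come out right; there is a genuine risk of being off by a factor of $2$ or of mishandling the convention at $B_{k/n}=a$ (a null event for fBm, so harmless in $L^2$). Once that identification is made, \eqref{particularEq} is immediate from \eqref{mainEq} with $i=j=1$: the right-hand side becomes $C(H,T)\cdot 2e^{-Pa^2/2}n^{-(1-H)/2}$, and absorbing the $2$ into the constant $C(H,T)$ (which is permitted since $C$ depends only on $H,T$) yields exactly the claimed bound. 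I would close the proof by noting that the one-dimensional case is covered by Theorem \ref{main} upon taking $B^1=B^2=B$, equivalently by the remark that for $i=j$ the cross term $\delta_{ij}$ equals $1$.
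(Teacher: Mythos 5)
Your overall plan --- specialise Theorem \ref{main} to a single convex function whose second derivative is a point mass at $a$ --- is natural, and your algebra is right: with $f=\mathrm{sgn}(\cdot-a)$ one has $\mu=f'=2\delta_a$, and your telescoping/crossing computation correctly gives
$$S_n^{1,1}(t)=2n^{2H-1}\sum_{k=0}^{\lfloor nt\rfloor}\bigl|B_{\frac{k+1}{n}\wedge t}-a\bigr|\,\mathbb{I}_{\left\{\mathrm{sgn}\left(B_{\frac{k+1}{n}\wedge t}-a\right)\mathrm{sgn}\left(B_{\frac{k}{n}}-a\right)<0\right\}},$$
which is exactly the sum in \eqref{particularEq} (the missing $-a$ there is a typo). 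This is, however, not the route the paper takes: the paper obtains Proposition \ref{particular} as a restatement of what Steps 1 and 2 of the proof of Theorem \ref{main} establish for $f=\mathbb{I}_{\cdot>a}$, namely the estimate $\|S_{n,a}(t)-\tfrac12 L_t(a)\|_{L^2(\Omega)}\le\beta e^{-Da^2}n^{-\frac{1-H}{2}}$ together with the identity $S_{n,a}(t)=n^{2H-1}\sum_k|B_{\frac{k+1}{n}\wedge t}-a|\mathbb{I}_{\{\text{crossing}\}}$; the Proposition is an input to Step 3, not a corollary of the finished theorem.

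The factor of $2$ you flag as a ``risk'' is a genuine, unresolved gap, and it is fatal to the derivation as written. Taking the statement of Theorem \ref{main} at face value with $\mu=2\delta_a$, your argument yields $\|2n^{2H-1}\sum(\cdots)-2L_t(a)\|_{L^2(\Omega)}\lesssim n^{-\frac{1-H}{2}}$, i.e. the centering is $2L_t(a)$, not the $L_t(a)$ of \eqref{particularEq}; the two bounds are incompatible, since the normalised sum is not asymptotically negligible. The correct limit is the one in the Proposition: $n^{2H-1}\sum_k|B_{\frac{k+1}{n}\wedge t}-a|\mathbb{I}_{\{\text{crossing}\}}\to\tfrac12 L_t(a)$, as one reads off from Lemma \ref{calculus1} (whence the constants $\tfrac12$ and $\tfrac14$ in \eqref{mixed} and \eqref{squared}) and from the smooth-case consistency check $S_n^{i,i}(t)\to\tfrac12\int_0^tf'(B_s)\,ds$. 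The mismatch traces back to the statement of Theorem \ref{main} itself, whose centering should read $\tfrac{\delta_{ij}}{2}\int_{\mathbb{R}}L^i_t(a)\,d\mu(a)$ (as it does in Remark \ref{sharpness}): the representation $f(x)-f(y)=2\int(\mathbb{I}_{x>a}-\mathbb{I}_{y>a})\,d\mu(a)$ invoked in Step 3 is off by a factor $2$ from the standard one, $f(x)=\beta_I+\tfrac12\int\mathrm{sgn}(x-a)\,d\mu(a)$. Consequently Proposition \ref{particular} cannot be deduced from the literal statement of Theorem \ref{main}; to complete your argument you must either invoke the intermediate Steps 1--2 estimate directly, or first correct the constant in the theorem, after which your specialisation (now with $\tfrac12\int L_t\,d\mu=L_t(a)$) does go through.
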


The present paper completes a study initiated in \cite{azmoodeh2015rate} and pursued in \cite{azmoodeh2022sharp} and \cite{chen2019pathwise}.
Proposition \ref{particular} was initially proved in the case of the fractional Brownian motion of index $H<\frac{1}{2}$ (see \cite{mukeru2017representation}).
 Closely related results where proved in the recent papers \cite{matsuda2022extension} and \cite{podolskij2018comment}. Notice that none of the aforementioned
papers provide a quantitative bound of the form \eqref{particularEq}. On the other hand, the rate $\frac{1-H}{2}$ in the case $H<\frac{1}{2}$ already appears in
the recent paper \cite{altmeyer2021approximation}, which looks at various approximations of the local time (see \cite{altmeyer2021approximation}, Corollary 7).

\begin{remark} When one assume that $f$ is for example a $\mathcal{C}^1$ function, then we can write $d\mu=f'd\lambda$ with $\lambda$ the Lebesgue measure. 
Then, we can use the well known identity
$$\int_{\mathbb{R}}L^i_t(a)d\mu(a)=\int_{\mathbb{R}}L^i_t(a)f'(a)da=\int_0^tf'(B^i_s)ds,$$
and then recover existing results in the smooth case (see e.g \cite{garino2022asymptotic}).
\end{remark}

\begin{remark}\label{sharpness} When the function $f$ is not continous (take for example $x\rightarrow sgn(x)$), the rate $\frac{1-H}{2}$ (which we expect to be sharp)
in the r.h.s of \eqref{mainEq} is in variance with the regular case $f\in\mathcal{C}^1$, where we have (see \cite{garino2022asymptotic}, Corollary 3.4):

$$S_n^{i,j}(t)-\frac{\delta_{ij}}{2}\int_0^tf'(B^i)ds=O_{\mathbb{P}}\left(\kappa_H(n)\right),$$

where $\kappa_H$ is given by:

\begin{equation*}\kappa_H(n):=\left\{
      \begin{array}{cll}
        n^{-\frac{1}{2}} &\textit{ if } H\in(\frac12,\frac34)\\
        \left(n\ln(n)\right)^{-\frac{1}{2}}& \textit{ if } H=\frac{3}{4}\\
        n^{2H-2} &\textit{ if } H\in(\frac34,1)\\
      \end{array}
    \right.
\end{equation*}

In any case, we can see that $\kappa_H(n)n^{\frac{1-H}{2}}\underset{n\rightarrow\infty}\longrightarrow 0$. Investigating the asymptotic behaviour of the second order error
$$\epsilon(n):=n^{\frac{1-H}{2}}\left(S_n^{i,j}-\frac{\delta_{ij}}{2}\int_0^tL_t^i(a)d\mu(a)\right)$$
could be a further interesting problem.

\end{remark}

The main technical result required in the proof of Theorem \ref{main} is the quantitative bound stated in Proposition \ref{mainBound}, which might be of independent interest. 
Before stating it, let us fix some notations that will be used throughout the paper.

\subsection{Notations and a technical result}\label{notTech}

Throughout, we let $T<\infty$ be a positive real number and we consider intervals $[0,T]$. For all $s>0$ and $n\in\mathbb{N}^*$, we write $s_n=\frac{\lfloor ns\rfloor}{n}$, where $\lfloor \cdot\rfloor$ denotes the floor function. We also use $\Gamma(\cdot)$ as the usual Gamma function. For given time indices $0=t_0\leq t_1\leq\ldots\leq t_p$, we denote by $\Sigma((0,t_1),\ldots,(t_{p-1},t_p))$ the covariance matrix of the Gaussian vector $(B_{t_i}-B_{t_{i-1}})_{i\in\llbracket 1,p\rrbracket}$,
where $B$ is a fractional Brownian motion of Hurst index $H\in(0,1)$ (see Section \ref{prl} for more details). Finally, for a positive definite matrix $Q\in\mathbb{R}^{q\times q}$,
we define a function 
\begin{equation}\label{densityExp}
d_Q(x)=e^{-x^TQ^{-1}x}, \quad x\in\mathbb{R}^q.
\end{equation}

\medskip 

Let us choose $p,q\in\mathbb{N}^*$. For every subset $A\subset\llbracket 1,p+q\rrbracket$, we will write $\bar A:=\llbracket 1,p+q\rrbracket\setminus A$,
 and for $i\in\{1,\ldots,Card(A)\}$, $A[i]$ will denote the $i$-th element of $A$ (in the canonical order).
Finally, in what follow, $D$ and $\beta$ will denote generic constants depending only on $T,H,p,q$ and possibly $M$ (see below the statement of hypothesis $(H_1)$ in Proposition \ref{mainBound})
and whose value will potentially change from line to line.

\medskip 

Consider a functional of the increments of a fractional Brownian motion $Y:=F((B_{t_i}-B_{t_{i-1}})_{i\in\llbracket 1,m\rrbracket})$. Our main technical ingredient is the forthcoming Proposition \ref{mainBound} that provides a bound
for the expectation of $\mathbb{E}[Y]$. More importantly, under appropriate growth conditions on $F$ and when some of the time increments $|t_i-t_{i-1}|$ are ''small'' and some others are ''large'', it  provides a quantitative bound on the difference between the expected value of $Y$ and the expected value of the same function evaluated on a Gaussian vector with independent components. Roughly speaking, this means that ''small'' increments are close to being independent from ''large'' increments provided that ''small'' increments are not adjacent to each other. Intuitively, this means that as long as there is at least one large increment in between, small increments are almost independent of each other.

\smallbreak The proof of Proposition \ref{mainBound} relies on a detailed study of the covariance matrix of the vector $(B_{t_i}-B_{t_{i-1}})_{i\in\llbracket 1,m\rrbracket}$, and is postponed into Section \ref{prfMain2}.

\begin{proposition}\label{mainBound}
Let $p,q\in\mathbb{N}^*$ with $p\leq q$ and let $0=t_0<t_1\ldots<t_{p+q}\leq T$. Let $B$ be a fractional Brownian motion of Hurst index $H>\frac{1}{2}$ and let $F:\mathbb{R}^{p+q}\mapsto\mathbb{R}$ be a function
 with at most polynomial growth. Set also the following three hypothesis:

\begin{enumerate}
\item[$(H_1)$] There is a real $M>0$, a function $G:\mathbb{R}^{p+q}\rightarrow\mathbb{R}_+$ with at most polynomial growth, and a subset $J\subset\llbracket 1,p+q\rrbracket$ with cardinality $p$
and $1\in \bar J$  such that, for all $(x_1,\ldots,x_{p+q})$,
\begin{eqnarray}\label{entoglement}
\left|F\left(x_1,x_{1}+x_2,\ldots,\sum_{k=1}^{p+q}x_k\right)\right|\leq G(x_1,\ldots,x_{p+q})\prod_{i=1}^{q}\mathbb{I}_{\left\{|x_{\bar J[i]}|\leq M\sum_{l=1}^p|x_{J[l]}|\right\}}.\notag\\
\end{eqnarray}
\item[$(H_2)$] There exists an $h\in(0,1)$ such that for all $(i,j)\in J\times\bar J$ we have 
$$(t_{i}-t_{i-1})\leq h(t_j-t_{j-1}).$$
\item[$(H_3)$] For all $(i,j)\in J^2$, we have $i\neq j\implies |i-j|\geq 2$.
\end{enumerate}
Then we have:
\bigskip

\begin{enumerate}
\item[(1.)] Under the hypothesis $(H_1)$, there are two constant $\beta,D>0$, independent of the chosen time points $t_i$, such that for all $a\in\mathbb{R}$ we have 
\begin{eqnarray}\label{Basic}
&&\mathbb{E}\left[F(B_{t_1}-a,\ldots, B_{t_{p+q}}-a)\right]\notag\\
&\leq& \frac{\beta e^{-Da^2}}{\prod_{i=1}^{p+q}|t_i-{t_{i-1}}|^H}\int_{\mathbb{R}^{p+q}}\left|F\left(y_1,\ldots, \sum_{k=1}^{p+q}y_k\right)\right|\prod_{i=1}^pe^{-D\frac{y^2_{J[i]}}{|t_{J[i]}-t_{J[i]}-1|^{2H}}}dy_i.\notag\\
\end{eqnarray}
\item[(2.)] Assume in addition that $(H_2)$ and $(H_3)$ also holds true. Define the centered Gaussian vector $X:=(X_1,\ldots,X_p)$ via covariance 
$$Cov(X_i,X_j)=\delta_{ij}\left(t_{J[j]}-t_{J[j]-1}\right)^{2H},$$ and, for $x=(x_1,\ldots,x_{q})$, let $\tilde B(x)=(\tilde B_1,\ldots,\tilde B_{p+q})$ be defined, for all $k\in\llbracket 1,p+q\rrbracket,$ as
\begin{equation*}\tilde B_{k}(x)-\tilde B_{k-1}(x):=\left\{
      \begin{array}{cll}
        x_{i} \textit{ if } k= \bar J[i]\\
        X_{i}-X_{i-1}\textit{ if } k= J[i],\\
      \end{array}
    \right.
    \end{equation*}
    where $\tilde B_0 = 0$. 
If $\Sigma':=\Sigma\left((t_{\bar J[i-1]},t_{\bar J[i]})\right)_{i\in\llbracket 1,p+q\rrbracket}$ denotes the covariance matrix of $(B_{t_{\bar J[i]}}-B_{t_{\bar J[i-1]}})_{i\in\llbracket 1,q\rrbracket}$,
then, for all $a\in\mathbb{R}$, we have
\begin{eqnarray}\label{complete}
&&\left|\mathbb{E}\left[F(B_{t_1}-a,\ldots, B_{t_{p+q}}-a)\right]-\frac{d_{\Sigma'}((a,0,\ldots,0))}{(2\pi)^\frac{q}{2}\sqrt{det(\Sigma')}}\right.\notag\\
&&\left.\times\int_{\mathbb{R}^{q}}\mathbb{E}[F(\tilde B(y))]dy_{1}\ldots dy_q\right|\notag\\
&\leq&\frac{h^{2-2H}\beta e^{-Da^2}}{\prod_{i=1}^{p+q}|t_i-{t_{i-1}}|^H}\int_{\mathbb{R}^{p+q}}\left|F\left(y_1,\ldots, \sum_{k=1}^{p+q}y_k\right)\right|\prod_{i=1}^pe^{-D\frac{y^2_{J[i]}}{|t_{J[i]}-t_{J[i]-1}|^{2H}}}dy_i.\notag\\
\end{eqnarray}
where $d_{\Sigma'}$ is given by \eqref{densityExp}, and $\beta,D$ are as in \eqref{Basic}.
\end{enumerate} 

\end{proposition}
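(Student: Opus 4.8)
The plan is to prove Proposition \ref{mainBound} by a careful analysis of the covariance structure of the increments of $B^H$, splitting them into the "large" block indexed by $J$ (which will be shown to behave essentially like an independent Gaussian vector) and the "small" block indexed by $\bar J$. The starting point for part (1.) is to write $\mathbb{E}[F(B_{t_1}-a,\ldots,B_{t_{p+q}}-a)]$ as a Gaussian integral over the increment vector $(y_1,\ldots,y_{p+q})$ with density proportional to $\frac{1}{(2\pi)^{(p+q)/2}\sqrt{\det\Sigma}}d_\Sigma^{1/2}$-type weight, where $\Sigma=\Sigma((t_{i-1},t_i))_i$. The key structural fact, to be established via the detailed covariance computation promised for Section \ref{prfMain2}, is a two-sided comparison: $\Sigma^{-1}$ restricted to the $J$-coordinates is bounded below by $c\,\mathrm{diag}(|t_{J[i]}-t_{J[i]-1}|^{-2H})$, which produces the Gaussian factors $\prod_i e^{-D y_{J[i]}^2/|t_{J[i]}-t_{J[i]-1}|^{2H}}$ in \eqref{Basic}, while the remaining variables $y_{\bar J[i]}$ are integrated out freely against the indicator constraints in $(H_1)$, absorbing the normalizing determinant into the prefactor $\prod_i|t_i-t_{i-1}|^{-H}$. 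The term $e^{-Da^2}$ arises because the shift by $a$ enters the exponent quadratically and, since $1\in\bar J$, the coordinate $y_1$ carrying the $a$-dependence is one of the "free" variables, so completing the square in $y_1$ yields a clean factor $e^{-Da^2}$ after bounding the cross terms.

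**For part (2.),** the quantitative independence statement, I would use the following strategy. Condition on the "large" increments indexed by $J$: given these, the conditional law of the "small" increments indexed by $\bar J$ is Gaussian with a conditional mean that is linear in the large increments and a conditional covariance $\Sigma'_{\text{cond}}$; the claim is that, under $(H_2)$, both (i) the conditional mean is $O(h^{2-2H})$ times the large increments and (ii) $\Sigma'_{\text{cond}}$ differs from $\Sigma'$ (the unconditional covariance of the small increments) by $O(h^{2-2H})$. Here $(H_3)$ is essential: because no two indices of $J$ are adjacent, each large interval $[t_{J[i]-1},t_{J[i]}]$ is genuinely "long" compared to its small neighbors, and the relevant entries of the cross-covariance between large and small increments — which for fBm with $H>1/2$ are given by second differences of $|u-v|^{2H}$ — are small: a second difference over a short interval sitting next to a long one scales like (short length)$\cdot$(long length)$^{2H-1}$, hence like $h^{2H-1}$ relative to the normalization, and one more power from the Schur-complement algebra gives the stated $h^{2-2H}$. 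Then \eqref{complete} follows by inserting this conditional decomposition into $\mathbb{E}[F]$, replacing the conditional Gaussian by the independent product Gaussian $X$ with the error controlled by a first-order Taylor/mean-value estimate on the Gaussian densities, and bounding the resulting remainder by exactly the same integral expression as on the right side of \eqref{Basic}, now with the extra gain $h^{2-2H}$. The polynomial growth of $F$ (via $G$ in $(H_1)$) guarantees all these Gaussian integrals converge and that the Taylor remainders are integrable.

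**The main obstacle** I anticipate is step (i)–(ii) of part (2.): obtaining the explicit rate $h^{2-2H}$ for the discrepancy between the conditioned and unconditioned small-increment laws, uniformly over all admissible configurations of time points. This requires precise estimates on entries and on the inverse of the fBm covariance matrix, showing that the off-diagonal blocks coupling $J$ and $\bar J$ are small in the right norm after rescaling each increment by its standard deviation $|t_i-t_{i-1}|^H$. The subtlety is that individual entries of $\Sigma$ are \emph{not} small — it is only the \emph{second differences} (equivalently, the Schur complement / conditional-covariance corrections) that are small, and making this rigorous needs the convexity/concavity properties of $u\mapsto |u|^{2H}$ together with the non-adjacency hypothesis $(H_3)$ to ensure that every "small" interval is flanked by a genuinely larger one. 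A secondary, more bookkeeping-type difficulty is keeping the constants $\beta,D$ genuinely independent of the time points $t_i$ while tracking the determinant factors; this is handled by always normalizing increments to unit variance before estimating, so that all bounds are in terms of ratios of interval lengths, which are controlled by $h$ and $(H_3)$ alone.
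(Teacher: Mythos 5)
Your overall architecture -- write the expectation as a Gaussian integral over the increments, prove a quantitative block structure for $\Sigma^{-1}$ and $\det(\Sigma)$ under the small/large dichotomy, and use the indicator in $(H_1)$ to make the ``free'' integrals converge -- is the same as the paper's, and you correctly identify the hard part (uniform entrywise control of the inverse covariance). However, three points need fixing. First, you have the roles of $J$ and $\bar J$ reversed throughout: by $(H_2)$ the $J$-indexed increments are the \emph{small} ones and the $\bar J$-indexed ones are \emph{large}; $(H_3)$ says the small increments are pairwise non-adjacent, hence each is flanked by large ones; the block that decouples into the independent diagonal Gaussian $X$ in \eqref{complete} is the small block $J$, while $\Sigma'$ is the retained joint covariance of the large block $\bar J$. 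Second, the factor $e^{-Da^2}$ in \eqref{Basic} cannot come from ``completing the square in $y_1$'': the right-hand side of \eqref{Basic} carries Gaussian weights only in the $J$-variables and integrates $y_1$ with no weight at all, so once the $y_1$-Gaussian is discarded no $a$-dependence survives. The paper instead uses a dichotomy driven by $(H_1)$: either $(y_1+a)^2\geq a^2/4$ (and the $y_1$-factor already yields $e^{-Da^2}$ before being dropped), or $|y_1|\geq |a|/2$, which on the support of $F$ forces $\sum_{l}|y_{J[l]}|\geq |a|/(2M)$ and hence extracts $e^{-Da^2}$ from the retained $J$-Gaussians. This transfer of the $a$-dependence onto the $J$-block via $(H_1)$ is essential and is missing from your sketch.

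Third, and most substantively, your claimed rate for the conditional-law comparison does not come out as stated. The normalized cross-covariance between a small increment of length $\delta$ and an adjacent large one of length $\Delta$ is of order $\delta\Delta^{2H-1}/(\delta^H\Delta^H)=h^{1-H}$, so the conditional mean correction for the Gaussian law is $O(h^{1-H})$, \emph{not} $O(h^{2-2H})$; since $h^{1-H}\geq h^{2-2H}$ for $H>\tfrac12$, a term of that order would destroy the claimed rate. The rate is rescued only because, on the support of $F$, hypothesis $(H_1)$ bounds each $|y_{\bar J[i]}|$ by $M\sum_l|y_{J[l]}|$, i.e.\ the large-block variables are confined to the scale of the small increments; after this restriction the cross terms in the exponent contribute $O(h)$ and the within-$J$ off-diagonal and determinant corrections contribute $O(h^{2-2H})$, all of which are $\leq h^{2-2H}$ for $H>\tfrac12$. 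Relatedly, the limiting object in \eqref{complete} freezes the $\bar J$-density at the single point $(a,0,\ldots,0)$ rather than evaluating it at the integration variable, and justifying that replacement again requires the $(H_1)$ confinement, not just the covariance asymptotics. The paper implements all of this by a direct cofactor expansion of $\det(\Sigma)$ and $\Sigma^{-1}$ (Lemma \ref{decompDet}), an inductive diagonal lower bound $x^T\Sigma^{-1}x\gtrsim\sum_i x_i^2|t_i-t_{i-1}|^{-2H}$ (Lemma \ref{Diagonalisation}), and the elementary inequality $|e^x-1|\leq|x|e^{x\vee 0}$; your Schur-complement/conditioning route is essentially equivalent but must be supplemented by the $(H_1)$-restriction step to reach $h^{2-2H}$.
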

The subset $J$ in condition $(H_1)$ corresponds to the indices of the ''small'' increments, and condition $(H_1)$ allows to upper bound the function $F$ evaluated at points $B_{t_k}$ with a function $G$ depending on the increments. The complement $\overline{J}$ corresponds to the indices of ''large'' increments. Moreover, the product of indicators in $(H_1)$ implies that variables corresponding to large increments are bounded by (a multiple) of small increments, indexed with $J$. As a consequence of this, upper bounds on the right hand sides of statements involving only density for small increments is well-defined. Condition $(H_2)$ simply measures (with a constant $h$) how much larger the large increments are compared to the small ones. Similarly, $(H_3)$ simply means that small increments are not adjacent to each other. Then the more important second statement gives an explicit upper bound for the difference if one compares the true expectation to the one where small increments are considered independent, and dependence is taken account only through the covariance matrix consisting of large increments indexed by $\overline{J}$. 

The rest of the paper is organised as follows. In Section \ref{prl} we recall some useful notions and results related to the fractional Brownian motion, the local time, and to the stochastic integration of discontinuous processes.
 In Section \ref{prfMain1} we prove our main result, Theorem \ref{main}. The proof of our main technical ingredient, Proposition \ref{mainBound}, is presented in  Section \ref{prfMain2}. Finally, some other useful technical lemmas are gathered into the Appendix.

\section{Preliminaries}\label{prl}

\subsection{The fractional Brownian motion}

Let $H\in (0,1)$. The (one dimensional) fractional Brownian motion of Hurst index $H$ is defined as a centered Gaussian process $(B_t)_{t\geq 0}$ with $B_0=0$ and a covariance function $\mathbb{E}[B_sB_t]=\frac{1}{2}\left(t^{2H}+s^{2H}-|t-s|^{2H}\right).$ The fact that the function $f:s,t\rightarrow\frac{1}{2}\left(t^{2H}+s^{2H}-|t-s|^{2H}\right)$ is symmetric and semidefinite is proved in e.g \cite{nourdin2012selected}, yielding that we have a proper covariance function.
In the particular case $H=\frac{1}{2}$, the covariance reduces to $\mathbb{E}[B_sB_t]=s\wedge t$ that corresponds to the covariance of the standard Brownian motion.

\medskip 

We make use of the following well-known elementary properties.

\begin{lemma}\label{bounds}
Let $B$ be a fractional Brownian motion with Hurst parameter $H\in (0,1)$. Then,

\begin{itemize}
\item $B$ is $H$-self similar, i.e. $\forall c>0$, 
$$
(B_{ct})_{t\geq 0}\overset{\mathcal L}{=}(c^HB_{t})_{t\geq 0},
$$
where $\overset{\mathcal L}{=}$ stands for equality in law.
\medskip

\item $B$ has stationary increments, i.e. $$(B_{t+s}-B_{s})_{s\geq 0}\overset{\mathcal L}{=}(B_t)_{t\geq 0}.$$

\medskip

\item There is a modification of $B$ with almost surely $\alpha$-H\"older continuous paths for any $0<\alpha<H$.

\end{itemize}
\end{lemma}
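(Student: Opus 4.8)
The plan is to verify each of the three properties directly from the defining covariance function $R(s,t)=\frac12(t^{2H}+s^{2H}-|t-s|^{2H})$, using the fact that a centered Gaussian process is determined in law by its covariance. First I would establish $H$-self-similarity: for fixed $c>0$ the process $(B_{ct})_{t\ge0}$ is centered Gaussian, so it suffices to compute its covariance and compare with that of $(c^H B_t)_{t\ge0}$. One has $\EE[B_{cs}B_{ct}]=\frac12\big((ct)^{2H}+(cs)^{2H}-|ct-cs|^{2H}\big)=c^{2H}R(s,t)$, while $\EE[c^HB_s\,c^HB_t]=c^{2H}R(s,t)$ as well; since both processes are centered Gaussian with the same covariance, they have the same law. (One should also note $B_0=0$ a.s. on both sides, which is immediate.)

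Next, for stationarity of the increments, fix $s\ge0$ and consider the centered Gaussian process $(B_{t+s}-B_s)_{t\ge0}$. Its covariance at times $t,u$ is $\EE[(B_{t+s}-B_s)(B_{u+s}-B_s)] = R(t+s,u+s) - R(t+s,s) - R(s,u+s) + R(s,s)$. Expanding each term with the formula for $R$, the $s$-dependent pieces cancel in pairs — the key cancellation is that $(t+s)^{2H}$ and $(u+s)^{2H}$ appear with opposite signs, $s^{2H}$ appears an even number of times, and $|(t+s)-(u+s)|^{2H}=|t-u|^{2H}$ is $s$-free — leaving exactly $\frac12(t^{2H}+u^{2H}-|t-u|^{2H}) = R(t,u)$. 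Hence $(B_{t+s}-B_s)_{t\ge0}$ and $(B_t)_{t\ge0}$ are centered Gaussian with identical covariance, so they agree in law.

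Finally, for the existence of a H\"older-continuous modification I would invoke the Kolmogorov--Chentsov continuity theorem. Using stationarity of increments together with Gaussianity, for $p\ge1$ we have $\EE\big[|B_t-B_s|^{p}\big] = \EE\big[|B_{t-s}|^p\big] = c_p\,|t-s|^{pH}$, where $c_p = \EE[|N|^p]$ for a standard normal $N$ (here $B_{t-s}\sim \mathcal N(0,|t-s|^{2H})$). Applying Kolmogorov--Chentsov with exponent $pH$ on the right and the normalising factor $1$ in the time index, one obtains a modification whose paths are a.s.\ $\gamma$-H\"older continuous for every $\gamma<(pH-1)/p = H - 1/p$; letting $p\to\infty$ covers every $\alpha<H$, and a standard patching/diagonal argument produces a single modification that is $\alpha$-H\"older for all $\alpha<H$ simultaneously.

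None of these steps presents a genuine obstacle — the lemma is ``well-known'' and folklore — so the only thing to be careful about is bookkeeping: tracking signs in the increment-covariance expansion so the cancellation is transparent, and stating the Kolmogorov criterion with the correct relation between the moment exponent and the resulting H\"older exponent. One may simply cite a standard reference (e.g.\ \cite{nourdin2012selected}) for the details if a fully self-contained argument is not desired.
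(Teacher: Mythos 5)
Your proof is correct and is exactly the standard argument: the paper states this lemma without proof, treating it as folklore (citing \cite{nourdin2012selected} only for the positive definiteness of the covariance), so there is nothing in the paper to compare against. Your covariance computations for self-similarity and stationarity of increments check out, and the Kolmogorov--Chentsov step with $\gamma < H - 1/p$ and $p\to\infty$ is the right way to obtain the H\"older modification.
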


Computations with the fractional Brownian motions are often more involved than for the standard Brownian motion due to the complex structure of the covariance function. Fortunately, they can be simplified with the use of some useful estimates given below. For the proof, see \cite{jaramillo2021approximation}.

\begin{lemma}\label{estimates}
Let $B$ be a fractional Brownian motion with $H>\frac{1}{2}$ and let $T>0$. Then, there exists a constant $\beta>0$, depending solely on $T$ and $H$, such that:
\begin{itemize}
\item for all $0\leq t\leq T$, $0\leq u\leq v\leq T$, $|\mathbb{E}[(B_v-B_u)B_t]|\leq t^{2H-1}|v-u|$.
\item for all $0\leq u\leq v\leq T$ and $0\leq x\leq y\leq T$ with $u\leq x$ and $v-u\leq\frac{|x-v|}{2}$,
 $$|\mathbb{E}[(B_v-B_u)(B_y-B_x)]|\leq \beta(v-u)(y-x)|y-u|^{2H-2}.$$
\end{itemize}
\end{lemma}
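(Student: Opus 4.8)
The plan is to base both estimates on the integral (spectral) representation of the fractional Brownian covariance that is available precisely when $H>\frac12$: for any $0\le a\le b$ and $0\le c\le d$,
\begin{equation*}
\mathbb{E}[(B_b-B_a)(B_d-B_c)]=H(2H-1)\int_a^b\int_c^d|r-w|^{2H-2}\,dr\,dw .
\end{equation*}
This follows from the covariance formula $\mathbb{E}[B_sB_t]=\frac12(s^{2H}+t^{2H}-|s-t|^{2H})$ by differentiating in the two endpoints (or it may be quoted as standard). Since $2H-2\in(-1,0)$ the kernel $|r-w|^{2H-2}$ is locally integrable, so every integral below is finite, and the whole argument reduces to estimating explicit double integrals.

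For the first bound I apply the representation with $[a,b]=[u,v]$ and $[c,d]=[0,t]$, giving $\mathbb{E}[(B_v-B_u)B_t]=H(2H-1)\int_u^v\bigl(\int_0^t|r-w|^{2H-2}\,dr\bigr)\,dw$, so it suffices to bound the inner integral uniformly in $w$. For $w\in[0,t]$ one has $\int_0^t|r-w|^{2H-2}\,dr=\frac{w^{2H-1}+(t-w)^{2H-1}}{2H-1}$, which by concavity of $x\mapsto x^{2H-1}$ is maximised at $w=t/2$ and is therefore at most $\frac{2^{2-2H}}{2H-1}t^{2H-1}$; for $w>t$ one gets $\frac{w^{2H-1}-(w-t)^{2H-1}}{2H-1}\le\frac{t^{2H-1}}{2H-1}$ by subadditivity of $x\mapsto x^{2H-1}$. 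Integrating the uniform bound over $w\in[u,v]$ yields $|\mathbb{E}[(B_v-B_u)B_t]|\le H2^{2-2H}t^{2H-1}|v-u|$, which is the first claim up to the multiplicative constant $H2^{2-2H}$ (which can be absorbed into $\beta$; note that the literal constant $1$ is in fact slightly too small, e.g. it fails for $H=\frac34$ at $u=0$, $v=\frac34 t$).

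The second bound is where the work lies. I first extract the geometry from the hypotheses: from $u\le x$ and $v-u\le\frac12|x-v|$ one checks that necessarily $v\le x$ (if $x\in[u,v)$ then $|x-v|=v-x\le v-u$ would force $v-u\le\frac12(v-u)$), so the two intervals are disjoint with gap $x-v\ge 2(v-u)$, and in particular the covariance is a positive integral, $\mathbb{E}[(B_v-B_u)(B_y-B_x)]=H(2H-1)\int_u^v\int_x^y(r-w)^{2H-2}\,dr\,dw$. The key step is to remove the $w$-dependence: for $r\ge x$ and $w\le v$ we have $r-w\ge r-v$, and since $v-u\le\frac12(x-v)\le\frac12(r-v)$ we get $r-u\le\frac32(r-v)$, hence $r-w\ge r-v\ge\frac23(r-u)$. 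As $2H-2<0$ this gives $(r-w)^{2H-2}\le(\tfrac32)^{2-2H}(r-u)^{2H-2}$, and after integrating in $w$ it remains to prove the scalar inequality $\int_x^y(r-u)^{2H-2}\,dr\le\beta'(y-x)(y-u)^{2H-2}$.

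This scalar inequality is the main obstacle, precisely because the naive bound by the value of the integrand at the left endpoint $r=x$ costs a factor $\bigl(\tfrac{y-u}{x-u}\bigr)^{2-2H}$ that is unbounded when $y\gg x$, so the decay of the kernel must genuinely be used. To do this I compute $\int_x^y(r-u)^{2H-2}\,dr=\frac{(y-u)^{2H-1}-(x-u)^{2H-1}}{2H-1}$ and, setting $\theta=\frac{x-u}{y-u}\in[0,1]$, reduce the claim to $\frac{1-\theta^{2H-1}}{2H-1}\le\beta'(1-\theta)$, i.e. to the boundedness on $[0,1]$ of $\theta\mapsto\frac{1-\theta^{2H-1}}{(2H-1)(1-\theta)}$. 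This function is continuous with finite limits $\frac{1}{2H-1}$ at $\theta=0$ and $1$ at $\theta=1$, hence is bounded by a constant depending only on $H$. Combining this with the factor $(\tfrac32)^{2-2H}$ and the prefactor $H(2H-1)$ delivers the second estimate with a constant $\beta=\beta(H,T)$, all constants depending only on $H$ (and trivially on $T$, since $t,u,v,x,y\in[0,T]$), as required.
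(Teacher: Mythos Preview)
Your argument is correct. Note, however, that the paper does not actually prove this lemma: it simply refers the reader to \cite{jaramillo2021approximation}. What you have supplied is therefore a self-contained proof rather than a different route to the same destination.

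Your approach is the natural one for $H>\tfrac12$: both estimates reduce, via the identity
\[
\mathbb{E}[(B_b-B_a)(B_d-B_c)]=H(2H-1)\int_a^b\!\int_c^d|r-w|^{2H-2}\,dr\,dw,
\]
to explicit bounds on double integrals of the kernel $|r-w|^{2H-2}$. For the first item your case split on $w\le t$ versus $w>t$ and the concavity/subadditivity of $x\mapsto x^{2H-1}$ are clean; your observation that the literal constant $1$ fails (so the introductory $\beta$ is implicitly meant to multiply the first bound as well) is accurate and worth noting. For the second item, the key geometric step---deducing $v\le x$ and then $r-w\ge\tfrac23(r-u)$ from the gap condition $v-u\le\tfrac12(x-v)$---is exactly what is needed to replace $(r-w)^{2H-2}$ by $(r-u)^{2H-2}$ up to a constant, after which the reduction to the boundedness of $\theta\mapsto\frac{1-\theta^{2H-1}}{(2H-1)(1-\theta)}$ on $[0,1]$ is tidy and correct. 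This is essentially the argument one finds in the cited reference, so there is no substantive divergence to discuss.
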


\begin{remark} In the case $u=k,v=k+1,x=j$, and $y=j+1$ for $j\leq k$, we recover as a particular case of Lemma \ref{estimates} the formula
\begin{eqnarray*}\mathbb{E}[(B_{j+1}-B_j)(B_{k+1}-B_k)]=\mathbb{E}[B_1(B_{k+1-j}-B_{k-j})] \leq \beta(k-j)^{2H-2}.
\end{eqnarray*}
This should be compared to the known exact asymptotic expression \begin{eqnarray*}\mathbb{E}[(B_{j+1}-B_j)(B_{k+1}-B_k)]=\mathbb{E}[B_1(B_{k+1-j}-B_{k-j})]\\\sim_{k-j\rightarrow\infty}H(2H-1)(k-j)^{2H-2}.\end{eqnarray*}
\end{remark}
In the sequel, we will write, for all $\{(a^1_1,a^1_2),\ldots,(a^m_1,a^m_2)\}\in [0,t]^{2m}$, 
\begin{eqnarray}\label{covariance}
&&\Sigma\left((a^1_1,a^1_2),\ldots,(a^m_1,a^m_2)\right)\\
&:=&\begin{bmatrix}
    \mathbb{E}[(B_{a^1_1}-B_{a^1_2})^2]  & \dots  & \mathbb{E}[(B_{a^1_1}-B_{a^1_2})(B_{a^m_1}-B_{a^m_2})] \\
    \vdots & \vdots & \vdots & \\
    \mathbb{E}[(B_{a^m_1}-B_{a^m_2})(B_{a^1_1}-B_{a^1_2})] &\dots  & \mathbb{E}[(B_{a^m_1}-B_{a^m_2})^2]
\end{bmatrix},\notag
\end{eqnarray}
the covariance matrix of the increments of $B^H$. We will also make use of the following property, known as the \textit{local non-determinism of the fractional Brownian motion}\footnote{The original formulation of the local non-determinism property, introduced in \cite{berman1973local}, is stated in terms of conditional variances. As in the Gaussian case conditional processes are Gaussian as well, one can deduce the equivalent formulation presented here in the case of the fractional Brownian motion, see \cite[Lemma 2.3]{berman1973local}.}:
 for all $m\in\mathbb{N}^*$ there exists a constant $L_{H,m}>0$ such that for all $0=s_0<s_1\leq\ldots\leq s_m$ and for all $(u_1,\ldots,u_m)$,
\begin{equation}\label{localNd}Var\left(\sum_{i=1}^mu_i(B_{s_i}-B_{s_{i-1}})\right)\geq L_{H,m}\sum_{i=1}^m|u_i|^2(s_i-s_{i-1})^{2H}.\end{equation}
By using the H\"older inequality, it is easy to establish the converse bound
\begin{equation}\label{VarLow}
Var\left(\sum_{i=1}^mu_i(B_{s_i}-B_{s_{i-1}})\right)\leq m\sum_{i=1}^m|u_i|^2(s_i-s_{i-1})^{2H}.
\end{equation}
Consequently, local non-determinism means that the variance of the linear combination of the increments behaves, up to constants, as the linear combination of the variances of the increments. More details about the local determinism property of Gaussian processes can be found in \cite{berman1973local} and \cite{xiao2006properties}. 
\smallbreak

The non-determinism implies the following property that will be used in the proof of Proposition \ref{mainBound}. 
Let $0\leq s_0<s_1\leq s_2<s_3\ldots\leq s_{2m-2}<a_{2m-1}$ and let $\Sigma((s_{2i+1},s_{2i})_{i\in\llbracket 0,m-1\rrbracket})$ be as in \eqref{covariance}.
 Then, there is a constant $k_{H,m}>0$ (depending only on $H,T,m$) such that

\begin{equation}\label{Sandwich}
k_{H,m}\prod_{i=0}^{m-1}|s_{2i+1}-s_{2i}|^{2H}\leq det\left(\Sigma\left((s_{2i+1},s_{2i})_{i\in\llbracket 0,m-1\rrbracket}\right)\right).
\end{equation}

We similarly have the upper bound

\begin{equation}\label{Sandwich2}
det\left(\Sigma\left((s_{2i+1},s_{2i})_{i\in\llbracket 0,m-1\rrbracket}\right)\right)\leq m!\prod_{i=0}^{m-1}|s_{2i+1}-s_{2i}|^{2H}.
\end{equation}

As a consequence of \eqref{localNd} and \eqref{VarLow}, the eigenvalues $\lambda_1,\ldots,\lambda_m$ of the matrix $\Sigma\left((s_i,s_{i-1})_{i\in\llbracket 1,m\rrbracket}\right)$ verifies

\begin{equation}\label{eigenvalues}\forall i\in\{1,\ldots,m\}, ~L_{H,m}\min_{j\in\{1,\ldots,m\}}|s_j-s_{j-1}|^{2H}\leq\lambda_i\leq m\max_{j\in\{1,\ldots,m\}}|s_j-s_{j-1}|^{2H}.
\end{equation}
When there is no ambiguity, we will drop the dependency on $m$ and simply write $k_H$ and $L_H$.

\subsection{Local times}
\smallbreak
Given a generic $d$-dimensional stochastic process $(X_t)_{t\geq 0}$, we define the occupation measure (on an interval $I$) as 
$$
\eta_{X}(B) = Leb(s\in I:X_s \in B).
$$
That is, $\mu_X$ measures the ''time'' the process $X$ spends on a set $B$. If the occupation measure $\eta$ is absolutely continuous with respect to the Lebesgue measure, the density $L_I(a)$ of $\eta_X$ with respect to the Lebesgue measure is called \emph{the local time} of $X$. In what follows, we write simply $L_t(a) = L_{[0,t]}(a)$, for a given $t>0$. From the very definition of the local time, we have the occupation times formula
$$
\int_0^t f(X_s)ds = \int_{-\infty}^\infty f(y)L_t(y)dy.
$$
Consequently, we have 
$$L_t(a):=\lim_{\epsilon\rightarrow 0}\frac{1}{2^d\epsilon^d}\int_0^t\mathbb{I}_{B_s\in C_{\epsilon}(a)}ds,$$
where $C_{\epsilon}(a)=[a-\epsilon,a+\epsilon]^d$, and the limit exists for Lebesgue almost every $a$, or for every $a$ provided that the local time $L_t(a)$ is a continuous function (almost surely). In the case of the $d$-dimensional fractional Brownian motion,
 it is well known that the local time exists and is continuous for any Hurst index $H\in(0,1)$ such that $Hd<1$, see \cite{geman1980occupation}.

\medskip 

Let now $d=1$. For $u,v>0$ and $x,y\in\mathbb{R},$ let $\phi_{u,v}(x,y)$ be the density of $(B_u,B_v)$ at $(x,y)$, that is
\begin{equation}
    \label{density}
    \phi_{u,v}(x,y):=\frac{1}{2\pi\sqrt{det(\Sigma_{u,v})}}e^{-\frac{v^{2H}x^2}{2det(\Sigma_{u,v})}-\frac{u^{2H}y^2}{2det(\Sigma_{u,v})}+\frac{xy\mathbb{E}[B_uB_v]}{det(\Sigma_{u,v})}},
\end{equation}
where $\Sigma_{u,v}$ denotes the covariance matrix of the vector $(B_u,B_v)$. More generally, let $\phi_{u_1,\ldots,u_p}(x_1,\ldots,x_p)$ be the density of the $(B_{u_1},\ldots, B_{u_p})$ at $(x_1,\ldots,x_p)$. Then we have the following elementary result
regarding the approximation of the fractional Brownian motion local time in the $L^p(\Omega)$ norm.

\begin{lemma}\label{ApproxLocalTime}Let $H\in (0,1)$, $t\geq0$ and $L_t(a)$ the local time of the fractional Brownian motion of index $H$. Then, for all $p>0$,
$$L^{\epsilon}_t(a):=\frac{1}{2\epsilon}\int_0^t\mathbb{I}_{B_s\in [a-\epsilon,a+\epsilon]}ds\overset{L^p}{\underset{\epsilon\rightarrow 0}\longrightarrow} L_t(a).$$
\end{lemma}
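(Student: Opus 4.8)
The plan is to prove the $L^p(\Omega)$ convergence $L^\epsilon_t(a) \to L_t(a)$ by establishing that $(L^\epsilon_t(a))_{\epsilon>0}$ is a Cauchy family in $L^p(\Omega)$, which combined with the known almost-sure convergence $L^\epsilon_t(a) \to L_t(a)$ (valid since the fBm local time is jointly continuous for $H \in (0,1)$, $d=1$) upgrades the convergence to $L^p$. Since $L^p$-boundedness for large $p$ implies $L^p$-convergence for all smaller $p$, it suffices to treat the case where $p \geq 2$ is an even integer. First I would write
$$
\mathbb{E}\bigl[(L^\epsilon_t(a))^p\bigr] = \frac{1}{(2\epsilon)^p}\int_{[0,t]^p}\mathbb{P}\bigl(B_{s_1}\in [a-\epsilon,a+\epsilon],\ldots,B_{s_p}\in[a-\epsilon,a+\epsilon]\bigr)\,ds_1\cdots ds_p,
$$
and similarly express $\mathbb{E}[(L^\epsilon_t(a)-L^{\epsilon'}_t(a))^p]$ by expanding the power and using the occupation-measure representation.

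The key analytic input is a uniform bound on the joint density $\phi_{s_1,\ldots,s_p}(x_1,\ldots,x_p)$ of $(B_{s_1},\ldots,B_{s_p})$. Ordering the times $s_1 < \cdots < s_p$ (the integrand is symmetric, so up to the combinatorial factor $p!$ we may restrict to the ordered simplex), the local non-determinism property \eqref{localNd} together with \eqref{eigenvalues} gives that the determinant of the covariance matrix of the increments $(B_{s_i} - B_{s_{i-1}})$ is bounded below by $k_{H,p}\prod_{i=1}^p|s_i - s_{i-1}|^{2H}$, hence
$$
\phi_{s_1,\ldots,s_p}(x_1,\ldots,x_p) \leq \frac{C_{H,p}}{\prod_{i=1}^p|s_i - s_{i-1}|^{H}}
$$
uniformly in $(x_1,\ldots,x_p)$. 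Plugging this into the integral representation of $\mathbb{E}[(L^\epsilon_t(a))^p]$, the factors of $\epsilon$ cancel exactly, leaving
$$
\mathbb{E}\bigl[(L^\epsilon_t(a))^p\bigr] \leq p!\,C_{H,p}\int_{0 < s_1 < \cdots < s_p < t}\prod_{i=1}^p|s_i - s_{i-1}|^{-H}\,ds_1\cdots ds_p,
$$
and the right-hand integral is finite precisely because $H < 1$ (each one-dimensional integration of $u^{-H}$ converges near $0$), giving a bound independent of $\epsilon$. This already gives $L^p$-boundedness, hence uniform integrability of $(L^\epsilon_t(a))^{p'}$ for $p' < p$, which with the a.s. convergence yields $L^{p'}$-convergence for every $p' > 0$.

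To get the genuine Cauchy estimate (and thus a self-contained proof not relying on the a.s. statement), I would instead estimate $\mathbb{E}[(L^\epsilon_t(a) - L^{\epsilon'}_t(a))^p]$ directly: after expansion one obtains an alternating sum of integrals of probabilities $\mathbb{P}(B_{s_j} \in I_j)$ where each $I_j$ is either $[a-\epsilon,a+\epsilon]$ or $[a-\epsilon',a+\epsilon']$, normalized by the appropriate powers. Using the density bound above, each such probability is at most $C_{H,p}\,(\text{length of } I_j) \prod|s_i-s_{i-1}|^{-H}$, and the differences $|\mathbf{1}_{I_j}(x)/(2\epsilon) - \mathbf{1}_{I'_j}(x)/(2\epsilon')|$ together with a dominated-convergence argument (the integrable dominating function being $p!\,C_{H,p}\prod|s_i-s_{i-1}|^{-H}$ on the simplex) show the expansion tends to $0$ as $\epsilon, \epsilon' \to 0$. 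The main obstacle is organizing this dominated-convergence argument cleanly — one must check that the pointwise limit of the normalized indicator differences vanishes for a.e.\ configuration $(s_1,\ldots,s_p)$ of distinct times and that the family is dominated uniformly — but since the dominating function is exactly the one already shown to be integrable, this is routine bookkeeping rather than a real difficulty. I would then conclude via completeness of $L^p(\Omega)$ that the limit exists, and identify it with $L_t(a)$ using the occupation times formula and continuity of the local time.
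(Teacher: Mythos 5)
Your main argument is essentially the paper's proof: the paper likewise bounds $\sup_{0<\epsilon<1}\mathbb{E}[|L^{\epsilon}_t(a)|^p]$ by writing the $p$-th moment as an integral of the joint density over $[0,t]^p$, invoking local non-determinism to get $\phi_{s_1,\ldots,s_p}\leq C_{H,p}\prod_i|s_i-s_{i-1}|^{-H}$ on the ordered simplex, and then combines the resulting uniform integrability with the known convergence in probability of $L^\epsilon_t(a)$ to $L_t(a)$. Your supplementary Cauchy argument is a valid (if unnecessary) self-contained alternative, with one caveat: the normalized indicator differences $\mathbf{1}_{I_\epsilon}(x)/(2\epsilon)-\mathbf{1}_{I_{\epsilon'}}(x)/(2\epsilon')$ do not converge pointwise in $x$; the correct statement is that for each fixed configuration of distinct times the integral of each expanded term against the continuous density tends to $\phi_{s_1,\ldots,s_p}(a,\ldots,a)$, so the alternating sum telescopes to $(1-1)^p=0$, after which dominated convergence on the simplex applies as you describe.
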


\begin{proof}
Since $L^{\epsilon}_t$ converges in probability to $L_t$ for all $t>0$, it is sufficient to prove that $L^{\epsilon}_t$ is uniformly integrable in every space $L^p(\Omega)$. For this on the other hand, it is enough to show that $\forall p\in\mathbb{N}^*, \sup_{0<\epsilon<1}\mathbb{E}[|L^{\epsilon}_t|^p]<\infty$.

\medskip 

For $p\in\mathbb{N}^*$, we have 
\begin{eqnarray*}
\mathbb{E}[|L^{\epsilon}(a)_t|^p]&=&\frac{1}{2^p\epsilon^p}\int_{[0,t]^p}\prod_{i=1}^p\mathbb{E}[\mathbb{I}_{B_{u_i}\in [a-\epsilon,a+\epsilon]}]du_1\ldots du_p\\
&=&\frac{1}{2^p\epsilon^p}\int_{[0,t]^p}\int_{[a-\epsilon,a+\epsilon]^p}dx_1\ldots dx_pdu_1\ldots du_p\\
&&\times\frac{e^{-\frac{1}{2}x^T\Sigma^{-1}((0,u_1),\ldots,(0,u_p))x}}{(2\pi)^\frac{p}{2}\sqrt{det\left(\Sigma((0,u_1),\ldots,(0,u_p))\right)}}\\
&\leq &K^p_H\int_{[0,t]^p}\frac{1}{(2\pi)^\frac{p}{2}\sqrt{det\left(\Sigma((0,u_1),\ldots,(0,u_p))\right)}}du_1\ldots du_p\\
&\leq&p!\int_{0=u_0< u_1<\ldots<u_p< t=u_{p+1}}\frac{1}{(2\pi)^\frac{p}{2}}\prod_{i=1}^{p+1}(u_i-u_{i-1})^{-H}du_1\ldots du_p\\
&<&+\infty,
\end{eqnarray*}
where the penultimate line comes from the local non-determinism property of the fractional Brownian motion. This conclude the proof.
\end{proof}
As a result of the previous computations, we recover the well-known formula:
\begin{equation}\label{squareLocalTime}\forall t\geq 0,\mathbb{E}[(L_t(a))^p]=\int_0^t\ldots\int_0^t\phi_{u_1,\ldots,u_p}(a,\ldots,a)du_1\ldots du_p.\end{equation}

\subsection{Riemann-Stieltjes integration}
When $x,y:\mathbb{R}\rightarrow\mathbb{R}$ are two $\alpha$ (resp $\beta$)-H\"older continuous functions with $\alpha+\beta>1$ and $0\leq a\leq b$,
 it is well-known already since the early 1900's (see \cite{young1936inequality}) that one can define the integral $\int_a^b xdy$ as
$$\int_a^bx_sdy_s:=\lim_{n\rightarrow\infty}\sum_{k=\lfloor na\rfloor}^{\lfloor nb\rfloor-1}x_\frac{k}{n}\left(y_\frac{k+1}{n}-y_{\frac{k}{n}}\right),$$
and this can be slightly extended to functions with $\frac{1}{\alpha}$ (resp $\frac{1}{\beta}$)-finite variations. However, this cannot be applied in a straightforward manner in the case $x_s=F(u_s)$, where $u$ is not Lipschitz continuous and $F$ has discontinuities.
Indeed, then the paths of $x$ can easily have infinite $p$-variation for every $p>0$ as can be seen already the simple case $F(\cdot) = I_{\cdot > a}$ and $u$ being a path of, e.g. a fractional Brownian motion. In such cases, the integral can be defined by using either Z\"ahle type arguments (see \cite{chen2019pathwise})
 or an extension of the sewing Lemma (see \cite{yaskov2019pathwise}), provided that the path $u$ satisfies so-called sufficient variability condition (see \cite{chen2019pathwise} and later extensions \cite{HTV1,HTV2}). We will adopt the first approach based on fractional calculus that is more convenient for explicit computations. For details, we refer to the seminal paper \cite{zahle1998integration}.

Let $a<b$ and let $f\in L^1(\mathbb{R})$. The fractional integrals of order $\alpha>0$ of $f$ are defined as
\begin{eqnarray*}
&&I^\alpha_{a+}f(t):=\frac{1}{\Gamma(\alpha)}\int_a^t\frac{f(s)}{(t-s)^{1-\alpha}}ds,\\
&&I^\alpha_{b-}f(t):=\frac{(-1)^{\alpha}}{\Gamma(\alpha)}\int_t^b\frac{f(s)}{(t-s)^{1-\alpha}}ds.
\end{eqnarray*}
The above integrals converge for almost all $t\in (a,b)$. Furthermore, the operator $I_{a+}$ and $I_{b-}$ defines an injective linear mapping from $L^1$ to $L^1$.
 For $\alpha\in (0,1)$ and $f\in I^\alpha_{a+}(L^1)$, $g\in I^{\alpha}_{b-}(L^1)$, one can define the \textit{Weyl-Marchaud derivatives} as
\begin{eqnarray*}
&&D^{\alpha}_{a+}f(t):=\frac{1}{\Gamma(1-\alpha)}\left(\frac{f(t)}{(t-a)^\alpha}+\int_a^t\frac{f(t)-f(s)}{(t-s)^{\alpha+1}}ds\right)\\
&&D^{\alpha}_{b-}f(t):=\frac{(-1)^{\alpha}}{\Gamma(1-\alpha)}\left(\frac{f(t)}{(t-b)^\alpha}+\int_t^b\frac{f(t)-f(s)}{(t-s)^{\alpha+1}}ds\right).
\end{eqnarray*}

The following result on the existence of Riemann-Stieltjes integrals can be found in \cite{chen2019pathwise}:

\begin{theorem}\label{DefIntegral}

Let $X,Y$ be two real valued stochastic processes satisfying the following conditions:

\begin{enumerate}
\item $X$ (resp $Y$) is $\alpha$ (resp $\beta$)-H\"older continuous, with $\alpha+\beta>1$.
\item For almost every $t\in (0,T)$, $X_t$ has a density $p_t$ satisfying 
$$\int_0^T(\sup_{x\in\mathbb{R}}p_t(x))dt<\infty.$$
\item There exists $H>0$ and $r>\frac{1}{H}$ such that
$$\frac{\mathbb{E}[|X_t-X_s|^r]^\frac{1}{r}}{|t-s|^H}<\infty.$$
\end{enumerate}

Then, if $F$ is a real valued function of locally bounded variation, the Riemann sum 
$$\sum_{k=\lfloor na\rfloor}^{\lfloor nb\rfloor}F\left(X_\frac{k}{n}\right)\left(Y_{\frac{k+1}{n}\wedge b}-Y_\frac{k}{n}\right)$$
converges in probability for all $t\in (0,T)$ to a limit, denoted by $\int_a^bF(X_s)dY_s$. In addition, by setting $f_s=F(X_s)$ and assuming $\alpha>1-\beta$, we have for any $\gamma\in (1-\beta,\alpha)$ that
\begin{eqnarray*}
&&\int_a^bf_tdY_s\\
&=&(-1)^{\alpha}\int_a^bD^{\gamma}_{a+}\left(f-f_{a+}\right)(s)D^{1-\gamma}_{b-}\left(Y-Y_{b}\right)(s)ds+f_{a+}\left(Y_b-Y_a\right).
\end{eqnarray*}
Furthermore, if $F$ is a Lipschitz function whose derivative is of locally bounded variations, one has, almost surely,

\begin{equation}\label{ChangeVar}F(x_t)=F(x_0)+\int_0^tF'(x_s)dx_s.\end{equation}
\end{theorem}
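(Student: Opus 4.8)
The statement is the classical Zähle-type existence-and-change-of-variables result, and the plan is to deduce it from the fractional integration-by-parts formula together with a quantitative regularity estimate for the process $s\mapsto f_s:=F(X_s)$. First I would recall the abstract fact (from \cite{zahle1998integration}) that for $\gamma\in(1-\beta,\alpha)$ the generalised Stieltjes integral $\int_a^b f\,dY$ exists, is given by the displayed formula, and coincides with the limit of the Riemann sums, provided $f-f_{a+}\in I^\gamma_{a+}(L^1(a,b))$ and $Y-Y_b\in I^{1-\gamma}_{b-}(L^1(a,b))$. Since $Y$ is $\beta$-Hölder and $1-\gamma<\beta$, the second membership holds pathwise and $D^{1-\gamma}_{b-}(Y-Y_b)$ is bounded on $[a,b]$ by a constant times the $\beta$-Hölder seminorm of $Y$; hence everything reduces to showing that, almost surely,
\[
\int_a^b\Big|\tfrac{f(s)-f(a)}{(s-a)^\gamma}+\gamma\int_a^s\tfrac{f(s)-f(u)}{(s-u)^{\gamma+1}}\,du\Big|\,ds<\infty .
\]

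Next I would exploit that $F$ is of locally bounded variation: writing $\nu:=F'$ for its distributional derivative (a signed Radon measure) and $|\nu|$ for its total variation, one has $F(x)-F(y)=\int_{\mathbb{R}}\big(\mathbb{I}_{\{y<a\le x\}}-\mathbb{I}_{\{x<a\le y\}}\big)\,d\nu(a)$, so that $|F(X_s)-F(X_u)|\le\int_{\mathbb{R}}\big|\mathbb{I}_{\{a\le X_s\}}-\mathbb{I}_{\{a\le X_u\}}\big|\,d|\nu|(a)$. Because $X$ has $\alpha$-Hölder paths, $\sup_{[0,T]}|X_s|<\infty$ almost surely, so after restricting to the events $\{\sup_{[0,T]}|X_s|\le N\}$ and letting $N\to\infty$ at the end, one may assume $|\nu|$ is supported on a fixed compact and is therefore finite.

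The heart of the argument is the bound $\mathbb{E}\big[|F(X_s)-F(X_u)|\big]\le C\,|s-u|^{\kappa}$ for a suitable $\kappa>\gamma$, which is possible because $\gamma$ may be taken arbitrarily close to $\alpha$ and $r>1/H$. To obtain it, fix $a$ and a truncation level $\lambda>0$; then $\mathbb{E}\big|\mathbb{I}_{\{a\le X_s\}}-\mathbb{I}_{\{a\le X_u\}}\big|=\mathbb{P}(X_u<a\le X_s)+\mathbb{P}(X_s<a\le X_u)\le 2\lambda\sup_x p_u(x)+\mathbb{P}(|X_s-X_u|>\lambda)\le 2\lambda\sup_x p_u(x)+\lambda^{-r}\mathbb{E}|X_s-X_u|^r$ by Chebyshev and hypothesis (3); optimising over $\lambda$, integrating in $a$ against $d|\nu|$ and then in $u$ over $[a,b]$ (where hypothesis (2) makes $\int_a^b\sup_x p_u(x)\,du$ finite) yields the claimed power of $|s-u|$. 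Plugging this into the integrand above, taking expectations and using Tonelli gives $\int_a^b\mathbb{E}\big|D^\gamma_{a+}(f-f_{a+})(s)\big|\,ds<\infty$, whence the pathwise integrability holds almost surely; this proves existence of $\int_a^b F(X_s)\,dY_s$ and, by comparing each Riemann sum with the fractional integral via the same $L^1(\Omega)$ estimates, their convergence in probability to this limit.

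Finally, for the change-of-variables formula I would mollify: set $F_\varepsilon=F*\rho_\varepsilon\in\mathcal{C}^\infty$, so that the classical Young chain rule gives $F_\varepsilon(X_t)=F_\varepsilon(X_0)+\int_0^t F_\varepsilon'(X_s)\,dX_s$; since $F$ is Lipschitz, $F_\varepsilon\to F$ uniformly on compacts, which handles the left-hand side, while $F_\varepsilon'=F'*\rho_\varepsilon$ is bounded and of locally bounded variation with its derivative measure converging to $F''$ in the appropriate weak sense, so the stability estimates of the previous paragraph applied to the $BV$ function $F_\varepsilon'-F'$ force $\int_0^t F_\varepsilon'(X_s)\,dX_s\to\int_0^t F'(X_s)\,dX_s$ in probability, giving \eqref{ChangeVar}. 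I expect the main obstacle to be the third paragraph: making the estimate $\mathbb{E}|F(X_s)-F(X_u)|\le C|s-u|^\kappa$ genuinely uniform in $s,u$ and with exponent $\kappa$ large enough to absorb some $\gamma\in(1-\beta,\alpha)$, which is exactly where conditions (2), (3) and the constraint $\alpha+\beta>1$ are used in tandem.
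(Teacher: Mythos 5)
The paper does not actually prove this statement: Theorem \ref{DefIntegral} is imported verbatim from \cite{chen2019pathwise}, so there is no internal proof to compare against. Your overall strategy for the existence part --- reduce to Z\"ahle's fractional integration-by-parts, note that the condition on $Y$ is automatic from $\beta$-H\"older continuity since $1-\gamma<\beta$, and control $D^\gamma_{a+}(f-f_{a+})$ in $L^1(\Omega\times[a,b])$ via $|F(X_s)-F(X_u)|\le\int|\mathbb{I}_{\{a\le X_s\}}-\mathbb{I}_{\{a\le X_u\}}|\,d|\nu|(a)$ together with the density/Chebyshev bound $2\lambda\sup_x p_u(x)+\lambda^{-r}\mathbb{E}|X_s-X_u|^r$ optimised in $\lambda$ --- is indeed the mechanism of the cited reference, and that part is essentially sound. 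Two points need repair, though. First, the direction of $\gamma$: the constraint you must beat is $\kappa>\gamma$ with $\gamma$ taken close to $1-\beta$ (not close to $\alpha$; pushing $\gamma$ up to $\alpha$ makes the condition on $f$ \emph{harder}, and nothing about $f=F(X)$ is $\alpha$-H\"older anyway). The optimisation gives $\kappa=rH/(r+1)$, so you need $rH/(r+1)>1-\beta$; with hypothesis (3) read literally (a single pair $H>0$, $r>1/H$) this can fail, so you should either record it as an additional requirement or observe that in the intended application ($X$ an fBm) one may take $r$ arbitrarily large so that $\kappa\uparrow H>1-\beta$.

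The genuine gap is the last step. Your stability estimate bounds $\int_0^t(F_\varepsilon'-F')(X_s)\,dX_s$ by a constant times the total variation of the derivative measure of $F_\varepsilon'-F'$, i.e.\ $|\nu_\varepsilon-\nu|(\mathbb{R})$ with $\nu=F''$ and $\nu_\varepsilon=\nu*\rho_\varepsilon$. Mollification does \emph{not} make this small: if $F(x)=|x|$ then $\nu=2\delta_0$ and $\nu_\varepsilon=2\rho_\varepsilon\,dx$ are mutually singular, so $|\nu_\varepsilon-\nu|(\mathbb{R})=4$ for every $\varepsilon$. Hence the claimed convergence $\int_0^tF_\varepsilon'(X_s)\,dX_s\to\int_0^tF'(X_s)\,dX_s$ does not follow from the estimates you have set up. A workable fix is to linearise in the measure: use $F'(x)=c+\int\mathbb{I}_{\{x>a\}}\,d\nu(a)$ and Fubini to write $\int_0^tF'(X_s)\,dX_s=c(X_t-X_0)+\int_{\mathbb{R}}\bigl(\int_0^t\mathbb{I}_{\{X_s>a\}}\,dX_s\bigr)d\nu(a)$, prove that $a\mapsto\int_0^t\mathbb{I}_{\{X_s>a\}}\,dX_s$ is a.s.\ continuous and bounded (which needs its own estimate), and then pass to the limit using only the weak-$*$ convergence $\nu_\varepsilon\to\nu$; alternatively, prove \eqref{ChangeVar} directly by telescoping $F(X_{(k+1)/n})-F(X_{k/n})$ and controlling the second-order remainder with the same $\kappa$-moment bound. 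As written, the mollification paragraph does not close.
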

Notice that if $X=B^1$ and $Y=B^2$ are two fractional Brownian motions with Hurst indices $H_1$ and $H_2$ with $H_1+H_2>1$, the assumptions of Theorem \ref{DefIntegral}
 are satisfied for $\alpha=H_1$, $\beta=H_2$ and $p_t:x\rightarrow \frac{1}{\sqrt{2\pi}t^H}e^{-\frac{x^2}{2t^{2H}}}$. Consequently, the Riemann-Stieltjes type integrals in Theorem \ref{prfMain1} exists and the problem studied in the present paper is well-posed.

\section{Proof of Theorem \ref{main}}\label{prfMain1}
The present section is dedicated to the proof of the Theorem \ref{main}. The proof is divided into three steps. 
In the first two steps, we prove the result in the case where $f(x)=\mathbb{I}_{x>a}$ for some $a\in\mathbb{R}$, first for $i\neq j$ and then for $i=j$. As such, these two steps prove Proposition \ref{particular} concerning this particular case. In the final step, we apply this result to cover general functions $f$. Throughout the proof, the unimportant constants $\beta$ and $D$ may change from line to line.

\medskip

\subsection{Step 1: Proof of Theorem \ref{main} for $i\neq j$ and $f:x\rightarrow\mathbb{I}_{x>a}$.}
Let $\gamma\in \left(\frac{1}{2},H\right)$ and recall that $s^k_n=\frac{\lfloor ns^k\rfloor}{n}$, for $k\in \{1,2\}$. Thanks to the Theorem \ref{DefIntegral}, we have \footnote{By fractional integration by parts formula, one obtains that the roles of upper and lower bounds $a$ and $b$ in $f$ and $g$ can be interchanged which we have applied here in order to simplify few steps in our computations.}
\begin{eqnarray}\label{Part1}&&\mathbb{E}[\left|S_n^{i,j}(t)\right|^2]\\&=&\beta n^{2(2H-1)}\int_{[0,t]^2}\notag\\
&&\times\mathbb{E}\left[\prod_{k=1}^2\left(\frac{\mathbb{I}_{B^i_{\left(s^k_n+\frac{1}{n}\right)\wedge t>a}}-\mathbb{I}_{B^i_{s^k}>a}}{(\left(s^k_n+\frac{1}{n}\right)\wedge t-s^k)^{1-\gamma}}
    +\int_{s^k}^{\left(s^k_n+\frac{1}{n}\right)\wedge t}\frac{\mathbb{I}_{B^i_{y_k}>a}-\mathbb{I}_{B^i_{s^k}>a}}{(y_k-s^k)^{2-\gamma}}dy_k\right)\right]\notag\\
&&\times\mathbb{E}\left[\prod_{k=1}^2\left(\frac{B^j_{s^k}-B^j_{s^k_n}}{\left(s^k-s^k_n\right)^{\gamma}}+\int_{s^k_n}^{s^k}\frac{B^j_{s^k}-B^j_{y_k}}{(y_k-s^k)^{1+\gamma}}dy_k\right)\right]ds^1ds^2.\notag
\end{eqnarray}
By symmetry, it suffices to consider $0<s^1< s^2$ which is what we will do in the sequel. Then, for $y_1\in [s^1_n,s^1]$ and $y_2\in [s^2_n,s^2]$, we have, thanks to Lemma \ref{estimates} (when $s^2_n-s^1\geq\frac{2}{n}$) and the H\"older inequality (when $|s^2_n-s^1|\leq\frac{2}{n}$), that

\begin{eqnarray}\label{low1}
\left|\mathbb{E}\left[\prod_{k=1}^2\left(B^j_{y_k}-B^j_{s^k}\right)\right]\right|
      \leq \left(\beta\frac{\left|y_2-s^2\right|\left|y_1-s^1\right|}{|s^2_n-s^1|^{2-2H}}\wedge\left|y_2-s^2\right|^H\left|y_1-s^1\right|^H \right).\notag\\
\end{eqnarray}

\medskip

Let 
$$F:(v,w,x,y)\rightarrow\left(\mathbb{I}_{w\geq 0}-\mathbb{I}_{v\geq 0}\right)\left(\mathbb{I}_{y\geq 0}-\mathbb{I}_{x\geq 0}\right).$$
Then $F$ satisfies hypothesis $(H_1)$ in Proposition \ref{mainBound} with $p=q=2$, $J=\{2,4\}$, $M=1$ and $G\equiv 1$. 
Hence we may apply \eqref{Basic} to see that, for all $0<s^1< s^2$, for all $y_1\in [s^1,\left(s^1_n+\frac{1}{n}\right)\wedge t]$ and $y_2\in [s^2,\left(s^2_n+\frac{1}{n}\right)\wedge t],$ we have 

\begin{eqnarray*}
&&\mathbb{E}\left[\prod_{k=1}^2\left|\mathbb{I}_{B^i_{y_k}>a}-\mathbb{I}_{B^i_{s^k}>a}\right|\right]\\
&\leq& \frac{\beta e^{-Da^2}}{\prod_{i=1}^4|t_i-t_{i-1}|^H}\int_{\mathbb{R}^4}F\left(x_1,\ldots,\sum_{i=1}^4x_i\right)e^{-\frac{Dx_2^2}{|y_1-s^1|^{2H}}-\frac{Dx_4^2}{|y_2-s^2|^{2H}}}\prod_{i=1}^4dx_i,\\
\end{eqnarray*}
with $t_1=s^1$, $t_2=y_1$, $t_3=s^2$, $t_4=y_2$. Using Lemma \ref{calculus2}, we then have
\begin{equation}\label{low2}
\mathbb{E}\left[\prod_{k=1}^2\left|\mathbb{I}_{B^i_{y_k}>a}-\mathbb{I}_{B^i_{s^k}>a}\right|\right]\leq \beta e^{-Da^2}\frac{|y_2-s^2|^H|y_1-s^1|^H}{|s^1|^H|s^2-y_1|^H}.
\end{equation}
On the other hand, for any $s^1\leq s^2$, we also have, using again Lemma \ref{calculus2}, that
\begin{eqnarray}\label{low3}
\mathbb{E}\left[\prod_{k=1}^2\left|\mathbb{I}_{B^i_{y_k}>a}-\mathbb{I}_{B^i_{s^k}>a}\right|\right]&\leq&\mathbb{E}[|\mathbb{I}_{B^i_{y_2}>a}-\mathbb{I}_{B^i_{s^2}>a}|]\leq \beta e^{-Da^2}\frac{|y_2-s^2|^H}{(s^2)^H}.
\end{eqnarray}
As a consequence of \eqref{low2} and \eqref{low3}, we have, for $s^1<s^2$,

\begin{eqnarray}\label{low4}
&&\int_{s^1}^{s^1_n+\frac{1}{n}\wedge t}\int_{s^2}^{s^2_n+\frac{1}{n}\wedge t}\prod_{k=1}^2\mathbb{E}\left[\frac{|\mathbb{I}_{B_{y_k>a}}-\mathbb{I}_{s^k>a}|}{|y_k-s^k|^{2-\gamma}}\right]dy_2dy_1\notag\\
&=&\int_{s^1}^{s^1_n+\frac{1}{n}\wedge t}\int_{s^2}^{s^2_n+\frac{1}{n}\wedge t}\prod_{k=1}^2\mathbb{E}\left[\frac{|\mathbb{I}_{B_{y_k>a}}-\mathbb{I}_{s^k>a}|}{|y_k-s^k|^{2-\gamma}}\right]dy_2dy_1\mathbb{I}_{\left(s^1_n+\frac{1}{n}\right)\wedge t<s^2-\frac{1}{n}}\notag\\
&&+\int_{s^1}^{s^1_n+\frac{1}{n}\wedge t}\int_{s^2}^{s^2_n+\frac{1}{n}\wedge t}\prod_{k=1}^2\mathbb{E}\left[\frac{|\mathbb{I}_{B_{y_k>a}}-\mathbb{I}_{s^k>a}|}{|y_k-s^k|^{2-\gamma}}\right]dy_2dy_1\mathbb{I}_{\left(s^1_n+\frac{1}{n}\right)\wedge t\geq s^2-\frac{1}{n}}\notag\\
&\leq&\beta e^{-Da^2}\int_{s^1}^{s^1_n+\frac{1}{n}\wedge t}\int_{s^2}^{s^2_n+\frac{1}{n}\wedge t}\frac{|y_2-s^2|^H|y_1-s^1|^H}{|s^1|^H|s^2-y_1|^H}dy_2dy_1\mathbb{I}_{\left(s^1_n+\frac{1}{n}\right)\wedge t<s^2-\frac{1}{n}}\notag\\
&&+\int_{s^1}^{s^1_n+\frac{1}{n}\wedge t}\int_{s^2}^{s^2_n+\frac{1}{n}\wedge t}\prod_{k=1}^2\mathbb{E}\left[\frac{|\mathbb{I}_{B_{y_k>a}}-\mathbb{I}_{s^k>a}|}{|y_k-s^k|^{2-\gamma}}\right]dy_2dy_1\mathbb{I}_{\left(s^1_n+\frac{1}{n}\right)\wedge t\geq s^2-\frac{1}{n}}\notag\\
&\leq& \beta e^{-Da^2}\frac{1}{(s^1)^H\left(s^2-\left(s^1_n+\frac{1}{n}\right)\wedge t\right)^H}\frac{1}{n^{2\gamma+2H-2}}\mathbb{I}_{\left(s^1_n+\frac{1}{n}\right)\wedge t<s^2-\frac{1}{n}}\notag\\
&&+\int_{s^1}^{s^1+\frac{s^2-s^1}{2}}\int_{s^2}^{\left(s^2_n+\frac{1}{n}\right)\wedge t}\prod_{k=1}^2\mathbb{E}\left[\frac{|\mathbb{I}_{B_{y_k>a}}-\mathbb{I}_{s^k>a}|}{|y_k-s^k|^{2-\gamma}}\right]dy_2dy_1\mathbb{I}_{\left(s^1_n+\frac{1}{n}\right)\wedge t\geq s^2-\frac{1}{n}}\notag\\
&&+\int_{s^1+\frac{s^2-s^1}{2}}^{\left(s^1_n+\frac{1}{n}\right)\wedge t}\frac{1}{|y_1-s^1|^{2-\gamma}}dy_1\int_{s^2}^{\left(s^2_n+\frac{1}{n}\right)\wedge t}\mathbb{E}\left[\frac{|\mathbb{I}_{B_{y_2>a}}-\mathbb{I}_{s^2>a}|}{|y_2-s^2|^{2-\gamma}}\right]dy_2\notag\\&&\times\mathbb{I}_{\left(s^1_n+\frac{1}{n}\right)\wedge t\geq s^2-\frac{1}{n}}\notag\\
&\leq&\beta e^{-Da^2}\frac{1}{(s^1)^H\left(s^2-\left(s^1_n+\frac{1}{n}\right)\wedge t\right)^H}\frac{1}{n^{2\gamma+2H-2}}\mathbb{I}_{\left(s^1_n+\frac{1}{n}\right)\wedge t<s^2-\frac{1}{n}}\notag\\
&&+\beta e^{-Da^2}\frac{1}{(s^1)^H\left(\frac{s^2-s^1}{2}\right)^H}\frac{1}{n^{2\gamma+2H-2}}\mathbb{I}_{\left(s^1_n+\frac{1}{n}\right)\wedge t\geq s^2-\frac{1}{n}}\notag\\
&&+\beta e^{-Da^2}\frac{1}{(s^2)^H\left(\frac{s^2-s^1}{2}\right)^{1-\gamma}}\frac{1}{n^{\gamma+H-1}}\mathbb{I}_{\left(s^1_n+\frac{1}{n}\right)\wedge t\geq s^2-\frac{1}{n}}.
\end{eqnarray}

On the other hand, estimating the boundary and cross product terms leads to:
\begin{eqnarray}\label{low5}
  &&\left|\prod_{k=1}^2\mathbb{E}\left[\frac{\mathbb{I}_{B^i_{\left(s^k_n+\frac{1}{n}\right)\wedge t>a}}-\mathbb{I}_{B^i_{s^k}>a}}{(\left(s^k_n+\frac{1}{n}\right)\wedge t-s^k)^{1-\gamma}}\right]\right|\notag\\
  &\leq& \beta e^{-Da^2}\frac{n^{2-2H-2\gamma}}{(s^1)^H(s^2-(s^1_n+\frac{1}{n})\wedge t)^H}\mathbb{I}_{\left(s^1_n+\frac{1}{n}\right)\wedge t<s^2-\frac{1}{n}},
  \end{eqnarray}
and
\begin{eqnarray}\label{low6}
&&\left|\mathbb{E}\left[\frac{\mathbb{I}_{B^i_{\left(s^1_n+\frac{1}{n}\right)\wedge t>a}}-\mathbb{I}_{B^i_{s^1}>a}}{(\left(s^1_n+\frac{1}{n}\right)\wedge t-s^1)^{1-\gamma}}\int_{s^2}^{\left(s^2_n+\frac{1}{n}\right)\wedge t}\frac{|\mathbb{I}_{B^i_{y_2}>a}-\mathbb{I}_{B^i_{s^2}>a}|}{|y_2-s^2|^{2-\gamma}}dy_2\right]\right|\notag\\
&\leq&\frac{\beta e^{-Da^2}n^{2-2H-2\gamma}}{(s^1)^H(s^2-(s^1_n+\frac{1}{n})\wedge t)^H}\mathbb{I}_{\left(s^1_n+\frac{1}{n}\right)\wedge t< s^2-\frac{1}{n}}\notag\\
&&+\frac{\beta e^{-Da^2}n^{1-\gamma-H}}{((s^1_n+\frac{1}{n})\wedge t-s^1)^{1-\gamma}(s^2)^H}\mathbb{I}_{\left(s^1_n+\frac{1}{n}\right)\wedge t\geq s^2-\frac{1}{n}}.
\end{eqnarray}

Plugging \eqref{low1}, \eqref{low4}, \eqref{low5} and \eqref{low6} into \eqref{Part1}, and noticing that when $\left(s^1_n+\frac{1}{n}\right)\wedge t<s^2-\frac{1}{n}$
 we have $s^2-\left(s^1_n+\frac{1}{n}\right)\wedge t\leq \frac{s^2-s^1}{2}$, we can write 

\begin{eqnarray*}
\mathbb{E}[\left|S_n^{i,j}(t)\right|^2]\leq 2\left(R^{1,n}+R^{2,n}\right),
\end{eqnarray*}

with 

\begin{eqnarray*}
R^{1,n}&:=&n^{4H-2}\beta e^{-Da^2}\int_0^{t-\frac{2}{n}}\int_{s^1}^{s^1+\frac{2}{n}}\left(\frac{n^{2-2H-2\gamma}}{(s^1)^H|s^2-s^1|^H}\right.\\
&&\left.+\left(\frac{1}{(s^2)^H|s^2-s^1|^{1-\gamma}}+\frac{1}{(s^2)^H((s^1_n+\frac{1}{n})\wedge t-s^1)^{1-\gamma}}\right.\right.\\
&&\left.\left.+\frac{1}{(s^1)^H((s^2_n+\frac{1}{n})\wedge t-s^2)^{1-\gamma}}\right)n^{1-\gamma-H}\right)n^{2\gamma-2H}ds^2ds^1\\
&\leq&\beta e^{-Da^2} n^{1-H}.
\end{eqnarray*}

\begin{eqnarray*}
R^{2,n}&:=&n^{4H-2}\beta e^{-Da^2}\int_0^t\int_{s^1+\frac{2}{n}}^t\left(\frac{n^{2-2H-2\gamma}}{(s^1)^H|s^2-s^1|^H}\right)\frac{n^{2\gamma-2}}{|s^2-s^1|^{2-2H}}ds^2ds^1\\
&\leq&\beta n^{2H-2}\int_0^t\int_{s^1+\frac{2}{n}}^t\frac{1}{(s^1)^H|s^2-s^1|^{2H-2}}ds^2ds^1\\
&\leq& \beta e^{-Da^2} n^{1-H}.
\end{eqnarray*}

This concludes the proof of the first step.

\subsection{Step 2: Proof of Theorem \ref{main} for $i=j$ and $f:x\rightarrow\mathbb{I}_{x>a}$.}
For $i=j$, we drop the superscripts in the notation and use the change of variable formula \eqref{ChangeVar} to write

\begin{eqnarray*}S_n(t)&=&n^{2H-1}\left(\int_0^t\mathbb{I}_{B_s>a}dB_s-\sum_{k=1}^{\lfloor nt\rfloor}\mathbb{I}_{B_{\frac{k}{n}}>a}\left(B_{\frac{k+1}{n}\wedge t}-B_\frac{k}{n}\right)\right)\\
&=&n^{2H-1}\sum_{k=0}^{\lfloor nt\rfloor}\left(\left|B_{\frac{k+1}{n}\wedge t}-a\right|-\left|B_\frac{k}{n}-a\right|-\mathbb{I}_{B_{\frac{k}{n}}>a}\left(B_{\frac{k+1}{n}\wedge t}-B_\frac{k}{n}\right)\right)\\
&=&n^{2H-1}\sum_{k=0}^{\lfloor nt\rfloor}\left|B_{\frac{k+1}{n}\wedge t}-a\right|\mathbb{I}_{\left\{sgn\left(B_{\frac{k+1}{n}\wedge t}-a\right)sgn\left(B_\frac{k}{n}-a\right)<0\right\}}.
\end{eqnarray*}

To prove the claim, it is enough to establish the two following bounds:

\begin{equation}\label{mixed}
\left|\mathbb{E}[S_n(t)L_t(a)]-\frac{1}{2}\mathbb{E}[L_t(a)^2]\right|\leq Ce^{-Da^2}n^{1-H},
\end{equation}

\begin{equation}\label{squared}
\left|\mathbb{E}[S_n(t)^2]-\frac{1}{4}\mathbb{E}[L_t(a)^2]\right|\leq C e^{-Da^2}n^{1-H}.
\end{equation}
We will proceed as in Step 1, with the help of Proposition \ref{mainBound}. 
We will first show the bound \eqref{mixed}, and then briefly sketch the proof for the bound \ref{squared} that can be handled similarly.

\begin{proof}[Proof\nopunct]\textit{of \eqref{mixed}:}
In order to obtain hypothesis $(H_1)$, we introduce a modified version 
\begin{eqnarray*}
&&\left(S^{\epsilon}_n(t)\right)[k]:=\left|B_{\frac{k+1}{n}\wedge t}-a\right|\mathbb{I}_{\left\{sgn\left(B_{\frac{k+1}{n}\wedge t}-a\right)sgn\left(B_\frac{k}{n}-a\right)<0\right\}}\mathbb{I}_{\left|B_{\frac{k+1}{n}\wedge t}-a\right|>\epsilon}.
\end{eqnarray*}
Now, according to Lemma \ref{ApproxLocalTime} and the H\"older inequality, we have that

\begin{eqnarray*}\mathbb{E}[S_n(t)L_t(a)]
&=&\lim_{\epsilon\rightarrow 0}\mathbb{E}[S_n(t)L^{\epsilon}_t(a)]\\
&=&\lim_{\epsilon\rightarrow 0}\mathbb{E}[S^{\epsilon}_n(t)L^{\epsilon}_t(a)]\\
&=&\lim_{\epsilon\rightarrow 0} n^{2H-1}\sum_{k=0}^{\lfloor nt\rfloor}\int_0^tA^{\epsilon}_{s,k/n}ds\\
&=&\lim_{\epsilon\rightarrow 0} n^{2H}\int_{0}^t\int_0^tA^{\epsilon}_{u,v_n}du dv,
\end{eqnarray*}
with $A^{\epsilon}_{s,k/n}=\frac{1}{2\epsilon}\mathbb{E}[\left(S^{\epsilon}_n(t)\right)[k]\mathbb{I}_{B_s\in [a-\epsilon,a+\epsilon]}]$. Recall that by \eqref{squareLocalTime}, we have
$$
\mathbb{E}[L_t(a)^2] = \int_0^t \int_0^t \phi_{u,v}(a,a)dudv.
$$
This leads to the bound 
$$
\left|\mathbb{E}[S_n(t)L_t(a)]-\frac{1}{2}\mathbb{E}[L_t(a)^2]\right|\leq R^{1,n}+R^{2,n}+R^{3,n}+R^{4,n},
$$
with
\begin{eqnarray*}
R^{1,n}&:=&\lim\sup_{\epsilon\rightarrow 0}n^{2H}\int_{0}^t\int_0^t\left|A^{\epsilon}_{u,v_n}-\frac{\phi_{u,v}(a,a)}{2n^{2H}}\right|\\
&&\times\mathbb{I}_{min\left(u,v_n,|u-v_n|\right)>\frac{2}{n}}dudv\\
R^{2,n}&=&n^{2H}\int_{0}^\frac{2}{n}\int_\frac{2}{n}^t\left(\sup_{\epsilon\in (0,1)}A^{\epsilon}_{u,v_n}+\frac{\phi_{u,v}(a,a)}{2n^{2H}}\right)dudv\\
R^{3,n}&=&n^{2H}\int_{\frac{2}{n}}^t\int_{0}^\frac{1}{n}\left(\sup_{\epsilon\in (0,1)}A^{\epsilon}_{u,v_n}+\frac{\phi_{u,v}(a,a)}{2n^{2H}}\right)dudv\\
R^{4,n}&=&n^{2H}\int_{0}^t\int_{u-\frac{2}{n}\vee 0}^{u+\frac{1}{n}\wedge t}\left(\sup_{\epsilon\in (0,1)}A^{\epsilon}_{u,v_n}+\frac{\phi_{u,v}(a,a)}{2n^{2H}}\right)dudv.\\
\end{eqnarray*}
We prove that we have 
\begin{eqnarray*}
&&\lim\sup_{\epsilon\rightarrow 0}\left|A^{\epsilon}_{s,\frac{k}{n}}-\frac{\left(\left(\frac{k}{n}+\frac{1}{n}\right)\wedge t-\frac{k}{n}\right)^{2H}\phi_{\frac{k}{n},s}(a,a)}{2}\right|\notag\\
&\leq&\beta h^{2-2H}e^{-Da^2}\frac{1}{\left(\frac{k}{n}\right)^H\left|s-\frac{k}{n}\right|^Hn^{2H}}
\end{eqnarray*}
for $\min\left(s,\frac{k}{n},\left|s-\frac{k}{n}\right|\right)\geq \frac{2}{n}$ that allows to handle $R^{1,n}$, and
\begin{eqnarray*}
A^{\epsilon}_{s,\frac{k}{n}}&\leq&\frac{\beta e^{-Da^2}}{\min\left(s,\frac{k}{n}\right)^H\left|s-\frac{k}{n}\right|^H\frac{1}{\left(\frac{k+1}{n}\wedge t-\frac{k}{n}\right)^H}}\\
&&\times\int_{\mathbb{R}^2}|x+y|\mathbb{I}_{sgn(x)\neq sgn(y)}e^{-D\frac{\left(\frac{k+1}{n}\wedge t-\frac{k}{n}\right)^{2H}x^2}{2}}dxdy\\
&\leq&\beta\frac{n^{-2H}}{min\left(s,\frac{k}{n}\right)^H\left|s-\frac{k}{n}\right|^H}e^{-Da^2}
\end{eqnarray*}
for $\min\left(s,\frac{k}{n},\left|s-\frac{k}{n}\right|\right)<\frac{2}{n}$ that allows to handle $R^{2,n}-R^{4,n}$. 
\begin{itemize}
\item Let first $min\left(s,\frac{k}{n}\left|s-\frac{k}{n}\right|\right)>\frac{2}{n}$. We define a function $F^{\epsilon}:\mathbb{R}^3\mapsto\mathbb{R}$ defined as
\begin{equation*}F^\epsilon(x):=\left\{
      \begin{array}{cll}
        &&\frac{1}{2\epsilon}\mathbb{I}_{x_1\in [-\epsilon,\epsilon]}|x_3|\mathbb{I}_{sgn(x_2)\neq sgn(x_3)}\mathbb{I}_{|x_3|>\epsilon} \textit{ if } s<\frac{k}{n}\\
        &&\frac{1}{2\epsilon}\mathbb{I}_{x_3\in [-\epsilon,\epsilon]}|x_2|\mathbb{I}_{sgn(x_1)\neq sgn(x_2)}\mathbb{I}_{|x_2|>\epsilon}\textit{ otherwise.}\\
      \end{array}
    \right.
    \end{equation*}
Now $F^\epsilon$ satisfies the hypothesis $(H_2)$ and $(H_3)$ of Proposition \ref{mainBound}, with $p=1, q=2$, $h=\frac{1}{n\min\left(s,\frac{k}{n},s-\frac{k}{n}\right)}$,
 $t_1=s, t_2=\frac{k}{n}, t_3=\frac{k+1}{n}$ if $s<\frac{k}{n}$ and $t_1=\frac{k}{n}, t_2=\frac{k+1}{n}, t_3=s$ otherwise. 

\medskip 

Moreover, $(H_1)$ is satisfied with $M=3$ and
\begin{equation*}G^\epsilon(x):=\left\{
      \begin{array}{cll}
        &&\frac{1}{2\epsilon}|x_3|\mathbb{I}_{x_1\in [-\epsilon,\epsilon]} \textit{ if } s<\frac{k}{n}\\
        &&\frac{1}{2\epsilon}|x_2|\mathbb{I}_{x_3\in [-\epsilon,\epsilon]}\textit{ otherwise.}\\
      \end{array}
    \right.
\end{equation*}

For simplicity, assume that $s>\frac{k}{n}$ (the case $s<\frac{k}{n}$ can be handled in the exact same way). Noticing that the density of a vector $(X_1,X_2)$ evaluated in $(a,a)$ is the same as
 the density of $(X_1,X_2-X_1)$ evaluated in $(a,0)$, we have, using \eqref{complete} in Proposition \ref{mainBound}, for all $\epsilon>0$, that

\begin{eqnarray*}
&&\left|A^{\epsilon}_{s,\frac{k}{n}}-\phi_{\frac{k}{n},s}(a,a)\int_{\mathbb{R}^2}\frac{1}{2\epsilon}\mathbb{E}[F^{\epsilon}(\tilde B(x))]dx_1dx_2\right|\\
&\leq&\frac{\beta h^{2-2H}e^{-Da^2}}{\prod_{i=1}^3|t_{i}-t_{i-1}|^H}\int_{\mathbb{R}^2}\left(\int_{\mathbb{R}}G^{\epsilon}(y_1,y_2,y_3)dy_3\right)\\
&&\times|y_1+y_2|\mathbb{I}_{sgn(y_1)\neq sgn(y_1+y_2)}e^{-D\left(\frac{k+1}{n}\wedge t-\frac{k}{n}\right)^{2H}y_2^2}dy_1dy_2\\
&\leq&\beta h^{2-2H}e^{-Da^2}\frac{1}{\left(\frac{k}{n}\right)^H\left|s-\frac{k}{n}\right|^Hn^{2H}},
\end{eqnarray*}
where $\tilde B(x)$ is defined in the subsection \ref{notTech}, $\phi_{u,v}$ is defined in \eqref{density}, and where we used Lemma \ref{calculus1} in the last line.

We also have
\begin{eqnarray*}
&&\int_{\mathbb{R}^2}\frac{1}{2\epsilon}\mathbb{E}[F^{\epsilon}(\tilde B(x))]dx_1dx_2\\
&=&\int_{\mathbb{R}^2}\left(\frac{1}{2\epsilon}\int_{\mathbb{R}}\mathbb{I}_{x_3\in [x_1+x_2-\epsilon,x_1+x_2+\epsilon]}dx_3\right)\mathbb{I}_{|x_2|>\epsilon}\\
&&\times\frac{|x_1+x_2|\mathbb{I}_{sgn(x_1)\neq sgn(x_1+x_2)}e^{-\left(\frac{k+1}{n}\wedge t-\frac{k}{n}\right)\frac{x_1^2}{2}}}{\sqrt{2\pi\left(\frac{k+1}{n}\wedge t-\frac{k}{n}\right)^H}}dx_1dx_2\\
&\underset{\epsilon\rightarrow 0}\longrightarrow&\int_{\mathbb{R}^2}\frac{|x_1+x_2|\mathbb{I}_{sgn(x_1)\neq sgn(x_1+x_2)}e^{-\left(\frac{k+1}{n}\wedge t-\frac{k}{n}\right)\frac{x_1^2}{2}}}{\sqrt{2\pi\left(\frac{k+1}{n}\wedge t-\frac{k}{n}\right)^H}}dx_1dx_2\\
&=&\frac{1}{2}\left(\frac{k+1}{n}\wedge t-\frac{k}{n}\right)^{2H},
\end{eqnarray*}
where we used Lebesgue's dominated convergence theorem and where the last equality follows from Lemma \ref{calculus1}. In the similar way, the case $s<\frac{k}{n}$ yields the exact same bound and thus, we obtain
\begin{eqnarray}\label{termePrincipal}
&&\lim\sup_{\epsilon\rightarrow 0}\left|A^{\epsilon}_{s,\frac{k}{n}}-\frac{\left(\left(\frac{k}{n}+\frac{1}{n}\right)\wedge t-\frac{k}{n}\right)^{2H}\phi_{\frac{k}{n},s}(a,a)}{2}\right|\notag\\
&\leq&\beta h^{2-2H}e^{-Da^2}\frac{1}{\left(\frac{k}{n}\right)^H\left|s-\frac{k}{n}\right|^Hn^{2H}}.
\end{eqnarray}

\item Let next $\min\left(s,\frac{k}{n},\left|s-\frac{k}{n}\right|\right)<\frac{2}{n}$. In this case we can use \eqref{Basic} in Proposition \ref{mainBound} to obtain that
\begin{eqnarray*}
A^{\epsilon}_{s,\frac{k}{n}}&\leq&\frac{\beta e^{-Da^2}}{\min\left(s,\frac{k}{n}\right)^H\left|s-\frac{k}{n}\right|^H\frac{1}{\left(\frac{k+1}{n}\wedge t-\frac{k}{n}\right)^H}}\\
&&\times\int_{\mathbb{R}^2}|x+y|\mathbb{I}_{sgn(x)\neq sgn(y)}e^{-D\frac{\left(\frac{k+1}{n}\wedge t-\frac{k}{n}\right)^{2H}x^2}{2}}dxdy\\
&\leq&\beta\frac{n^{-2H}}{min\left(s,\frac{k}{n}\right)^H\left|s-\frac{k}{n}\right|^H}e^{-Da^2},
\end{eqnarray*}
where the last line follows again from Lemma \ref{calculus1}.
\end{itemize}

\bigskip

Now we are ready to estimate terms $R^{1,n}-R^{4,n}$. For $R^{2,n}$, the above bound gives 
\begin{eqnarray*}
|R^{2,n}|&\leq& \beta n^{2H}n^{-2H}\int_0^\frac{2}{n}\int_0^te^{-Da^2}\frac{1}{u^H\left(|u-v|^H+|u-v_n|^H\right)}dudv\\
&\leq&\beta n^{2H}n^{-2H}\int_0^\frac{2}{n}\int_0^te^{-Da^2}\frac{1}{u^H\left(3|u-v|^H+\left|\frac{1}{n}-u\right|^H\mathbb{I}_{v_n=\frac{1}{n}}\right)}dudv\\
&\leq& \frac{\beta}{n}e^{-Da^2}\int_0^\frac{2}{n}\frac{1}{u^H\left(\frac{2}{n}-u\right)^H}+K\int_0^\frac{2}{n}\int_\frac{2}{n}^t\frac{1}{3|v-\frac{1}{n}|^Hu^H}dudv\\
&\leq& \beta n^{1-H}e^{-Da^2},
\end{eqnarray*}
and the terms $R^{3,n}, R^{4,n}$ can be treated similarly. Finally, thanks to \eqref{termePrincipal}, we have

\begin{eqnarray*}
|R^{1,n}|&\leq& \int_{[0,T]^2}dudv\left(\frac{\beta e^{-Da^2} h^{2-2H}}{min\left(v_n,u\right)^H\left|v_n-u\right|^H}\right.\\
&&\left.+\left|\frac{\phi_{u,v}(a,a)}{2}-n^{2H}\frac{\left(\left(u_{n}+\frac{1}{n}\right)\wedge t-u_n\right)^{2H}}{2}\phi_{u,v_n}(a,a)\right|\right)\mathbb{I}_{(u,v)\in\mathcal C},
\end{eqnarray*}
with $\mathcal{C}=\left\{(u,v)\in [0,T]^2, \min(u,v_n,|u-v_n|)\geq\frac{2}{n}\right\}$.

According to Lemma \ref{calculus3}, we then have
\begin{eqnarray*}
|R^{1,n}|&\leq&\int_0^t\int_{t-\frac{1}{n}}^{t}\phi_{u,v}(a,a)dvdu\\
&&+\int_{[0,T]^2}\frac{\beta e^{-Da^2} h^{2-2H}}{min\left(v_n,u\right)^H\left|v_n-u\right|^H}\mathbb{I}_{(u,v)\in\mathcal C}dudv+ \beta e^{-Da^2}n^{H-1}\\
&\leq& \beta e^{-\frac{a^2}{2T}}n^{H-1}\\
&&+e^{-Da^2}\left(\int_{[0,T]^2}\frac{\beta e^{-Da^2} h^{2-2H}}{4^Hmin\left(v,u\right)^H\left|v-u\right|^H}\mathbb{I}_{(u,v)\in\mathcal C}dudv+n^{H-1}\right)\\
&\leq&e^{-Da^2}\left(\int_{[0,T]^2}\beta e^{-Da^2} n^{2-2H}\left(\frac{1}{4^Hmin\left(v,u\right)^H\left|v-u\right|^{1+H}}\right.\right.\\
&&\left.\left.+\frac{1}{4^Hmin\left(v,u\right)^{1+H}\left|v-u\right|^{H}}\right)\mathbb{I}_{(u,v)\in\mathcal C}dudv+n^{H-1}\right)\\
&\leq& \beta e^{-Da^2}n^{H-1}.
\end{eqnarray*}
The bound \eqref{mixed} is then proved.
\end{proof}

\begin{proof}[Proof\nopunct]\textit{of \eqref{squared}:}
\bigskip In order to prove the bound \eqref{squared}, we can see that 
$$\mathbb{E}[S_n(t)^2]=\sum_{k,j=0}^{\lfloor nt\rfloor}A_{\frac{k}{n},\frac{j}{n}},$$
with 

\begin{eqnarray*}
A_{\frac{k}{n},\frac{j}{n}}&:=&\mathbb{E}\left[F\left(B_\frac{k}{n},B_{\frac{k+1}{n}\wedge t},B_\frac{j}{n},B_{\frac{j+1}{n}\wedge t}\right)\right],\\
F:(u,v,w,x)&\rightarrow&|v-a|\mathbb{I}_{sgn(u-a)\neq sgn(v-a)}|x-a|\mathbb{I}_{sgn(w-a)\neq sgn(x-a)}.
\end{eqnarray*}

The function $F$ satisfies the hypotheses of Proposition \ref{mainBound} with
 $q=2$, $p=2$ and $h=\frac{1}{n\min\left(\frac{k}{n},\frac{j}{n}, |\frac{j}{n}-\frac{k+1}{n}|,|\frac{j+1}{n}-\frac{k}{n}|\right)}$.
 Finally, applying the Proposition \ref{mainBound} and proceeding as in the proof of \eqref{mixed}, we obtain that when $\min\left(\frac{k}{n},\frac{j}{n},\frac{|k-j|}{n}\right)>\frac{2}{n}$,
 
\begin{eqnarray*}
&&\left|A_{\frac{k}{n},\frac{j}{n}}-\frac{\left(\frac{k+1}{n}\wedge t-\frac{k}{n}\right)^{2H}\left(\frac{j+1}{n}\wedge t-\frac{j}{n}\right)^{2H}}{4}\phi_{\frac{k}{n},\frac{j}{n}}(a,a)\right|\\
&\leq&\beta e^{-Da^2}h^{2-2H}\frac{n^{-4H}}{\min\left(\left(\frac{k}{n}\right)^H,\left(\frac{j}{n}\right)^H\right)\left|\frac{k}{n}-\frac{j}{n}\right|^H}.
\end{eqnarray*}

Thus, proceeding exactly as in the proof of \eqref{mixed} and applying Lemma \ref{calculus3}, we obtain the desired bound.
\end{proof}

\subsection{Step 3: general case}
We have established in Step 1 and Step 2 that for some constant $D,\beta$, when $f=\mathbb{I}_{\cdot>a}$, and writing 

$$S_{n,a}^{i,j}(t):=n^{2H-1}\left(\int_0^t\mathbb{I}_{B^i_s>a}dB^j_s-\sum_{k=0}^{\lfloor nt\rfloor}\mathbb{I}_{B^i_{\frac{k}{n}}>a}\left(B^j_{\frac{k+1}{n}\wedge t}-B^j_\frac{k}{n}\right)\right),$$ 
we have for all $t\in [0,T]$,

$$\left\|S_{n,a}^{i,j}(t)-\frac{\delta_{ij}}{2}L^i_t(a)\right\|_{L^2(\Omega)}\leq \beta n^{-\frac{1-H}{2}}e^{-Da^2}.$$

Moreover, it is well known (see for example p. 224 in \cite{revuz2013continuous}) that we can write for all $x\in I$ with $I\subset \mathbb{R}$ an interval,
$$f(x)=\beta_I+\int_I sgn(x-a)\mu(da).$$
It follows that for $x,y\in I$ we have
\begin{eqnarray*}f(x)-f(y)&=&\int_I(sgn(x-a)-sgn(y-a))\mu(da)\\
&=&2\int_{\mathbb{R}}(\mathbb{I}_{x>a}-\mathbb{I}_{y>a})\mu(da)
\end{eqnarray*}
and hence the formula follows for arbitrary $x,y\in \mathbb{R}$.

Assume now that $\mu$ verifies the growth condition \eqref{growthC} with $P=D$, where $D$ is as in Step 1 and Step 2.
Using Jensen's inequality and Fubini's theorem (which is possible since $\mu$ is a $\sigma$-finite Radon measure), we get

\begin{eqnarray*}
&&\left\|S_n^{i,j}(t)-\delta_{ij}\int_{\mathbb{R}}L^i_t(a)d\mu(a)\right\|^2_{L^2(\Omega)}\\
&\leq&4\left\|\int_{\mathbb{R}}\left|S_{n,a}^{i,j}(t)-\delta_{ij}L^i_t(a)\right|d\mu(a)\right\|^2_{L^2(\Omega)}\\
&\leq&4\left\|\int_{\mathbb{R}}\frac{\left|S_{n,a}^{i,j}(t)-\delta_{ij}L^i_t(a)\right|}{\exp(-\frac{Pa^2}{2})}e^{-\frac{Pa^2}{2}}d\mu(a)\right\|^2_{L^2(\Omega)}\\
&\leq&4\int_{\mathbb{R}}e^{-\frac{Pa^2}{2}}d\mu(a)\int_{\mathbb{R}}\frac{\|S^{n,a}_{i,j}(t)-\frac{\delta_{ij}}{2}L_t(a)\|^2_{L^2(\Omega)}}{\exp(-Pa^2)}d\mu(a)\\
&\leq&K\beta n^{1-H}\int_{\mathbb{R}}e^{-\frac{Pa^2}{2}}d\mu(a)\int_{\mathbb{R}}e^{-Pa^2}d\mu(a)\\
&\leq& C(H,T)\left(\int_{\mathbb{R}}e^{-\frac{Pa^2}{2}}d\mu(a)\right)^2n^{1-H}.
\end{eqnarray*}
which concludes the proof of Theorem \ref{main}.
\begin{remark}
\label{remark:BV-proof}
The claim of Remark \ref{remark:BV} follows directly from the argument of Step 3, if we use Jordan decomposition and write a function of bounded variation as $f=f_1-f_2$, where $f_1$ and $f_2$ are derivatives of convex functions. Then $\mu = f' = \mu_1 - \mu_2$, and it simply remains to use total variation measure $|\mu| = \mu_1 + \mu_2$ in the upper bounds. 
\end{remark}

\section{Proof of Proposition \ref{mainBound}}\label{prfMain2}
Let us introduce the following notation:
\begin{eqnarray}\label{D}\mathcal D&=&\llbracket 1,p+q\rrbracket^2\setminus J\times J,\end{eqnarray} 
where $J$ is as in the statement of Proposition \ref{mainBound}.

\smallbreak 

We will start by proving the following lemma, which provides a useful decomposition of the determinant of the vector $(B_{t_k}-B_{t_{k-1}})_{k\in\llbracket 1,p+q\rrbracket}$.

\begin{lemma}\label{decompDet}
Let $\Sigma:=\Sigma\left(\left(0,t_1\right),\ldots,(t_{p+q},t_{p+q-1}))\right)$ and\\ $\Sigma'=\Sigma\left(\left(t_{\bar J[i]},t_{\bar J[i-1]})_{i\in\llbracket 1,q\rrbracket}\right)\right)$.
 Assume that the hypothesis $(H_2)$ and $(H_3)$ of Proposition \ref{mainBound} are satisfied. Then, there is a constant $\beta>0$ (independent from $(t_1,\ldots,t_{p+q})$)
 and functions $\Theta_j:\mathbb{R}\rightarrow\mathbb{R}, j\in\{1,2,3,4\}$ such that for all  $ 0<h<1$ we have $|\Theta_j(h)|\leq \beta h^{2-2H}$ and

\begin{enumerate}
\item[(a.)] \begin{eqnarray}\label{interm1}
det\left(\Sigma\right)=det\left(\Sigma'\right)\prod_{i=1}^p|t_{J_{[i]}}-t_{J_{[i]}-1}|^{2H}\left(1+\Theta_1(h)\right).
\end{eqnarray}

\medskip

\item[(b.)] For $i\in J$,
\begin{equation}\label{Cofacteur1}\Sigma^{-1}_{ii}=\frac{1}{|t_i-t_{i-1}|^{2H}}\left(1+\Theta_2(h)\right)\end{equation}

\medskip

\item[(c.)] For $i\neq j\in J$,
\begin{equation}\label{Cofacteur2}\Sigma^{-1}_{ij}=\frac{1}{|t_i-t_{i-1}|^{H}|t_j-t_{j-1}|^H}\Theta_3(h)\end{equation}

\medskip

\item[(d.)] For $i,j\in \llbracket 1,q\rrbracket$,
\begin{equation}\label{Cofacteur3}\Sigma^{-1}_{\bar J[i],\bar J[j]}=\Sigma'^{-1}_{ij}\left(1+\Theta_4(h)\right)\end{equation}

\item[(e.)] For $(i,j)\in\mathcal D$,
\begin{equation}\label{Cofacteur4}
|\Sigma^{-1}_{ij}|\leq \frac{\beta}{\max\left(|t_i-t_{i-1}||t_j-t_{j-1}|^{2H-1},|t_i-t_{i-1}|^{2H-1}|t_j-t_{j-1}|\right)}.
\end{equation}
\end{enumerate}

\end{lemma}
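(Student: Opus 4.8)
The plan is to reduce the whole statement to a perturbative block analysis of $\Sigma$ in which $\Sigma'$ appears \emph{exactly} as a diagonal block. Set $\Delta_k=B_{t_k}-B_{t_{k-1}}$, put $\bar J[0]:=0$, and introduce the Gaussian vector
$$V=\big(\Delta_{J[1]},\dots,\Delta_{J[p]},\ B_{t_{\bar J[1]}}-B_{t_{\bar J[0]}},\dots,B_{t_{\bar J[q]}}-B_{t_{\bar J[q-1]}}\big)=M\Delta ,$$
the small increments followed by the ``merged large'' increments. Since $1\in\bar J$ and, by $(H_3)$, no two consecutive indices lie in $J$, one has $\bar J[m]-\bar J[m-1]\le 2$, so the $\{0,1\}$-matrix $M$ becomes, after a suitable permutation of its rows, lower triangular with unit diagonal; hence $\det M=\pm1$ and $M,M^{-1}$ have entries bounded by $1$. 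Therefore $\det\Sigma=\det\big(M\Sigma M^{T}\big)$, and $M\Sigma M^{T}=\begin{pmatrix}A&\widetilde B\\ \widetilde B^{T}&\Sigma'\end{pmatrix}$ where $A=\mathrm{Cov}\big((\Delta_{J[i]})_i\big)$ and $\Sigma'$ is exactly the matrix of the statement; moreover $\Sigma^{-1}=M^{T}\big(M\Sigma M^{T}\big)^{-1}M$. With the Schur complement $S:=A-\widetilde B\Sigma'^{-1}\widetilde B^{T}$ one gets $\det\Sigma=\det\Sigma'\det S$, while the $J\times J$ block of $\big(M\Sigma M^{T}\big)^{-1}$ equals $S^{-1}$, the $\bar J\times\bar J$ block equals $\Sigma'^{-1}+\Sigma'^{-1}\widetilde B^{T}S^{-1}\widetilde B\Sigma'^{-1}$, and the mixed block equals $-S^{-1}\widetilde B\Sigma'^{-1}$; transporting these through $M$ (a permutation composed with the nilpotent ``merging'' part) is pure bookkeeping.

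It then remains to control the blocks after rescaling by $\Lambda_J=\mathrm{diag}\big(|t_{J[i]}-t_{J[i]-1}|^{H}\big)$ and $\Lambda'=\mathrm{diag}\big(|t_{\bar J[i]}-t_{\bar J[i-1]}|^{H}\big)$. First, $\Lambda_J^{-1}A\Lambda_J^{-1}=I+E_1$ with operator norm $\|E_1\|\le\beta h^{2-2H}$: the diagonal is exactly $1$, and for $i\neq j$ hypothesis $(H_3)$ places a large increment strictly between the $i$-th and $j$-th small increments, so Lemma \ref{estimates} gives $|A_{ij}|\le\beta\,|t_{J[i]}-t_{J[i]-1}|\,|t_{J[j]}-t_{J[j]-1}|\,d_{ij}^{2H-2}$ with $d_{ij}$ at least the larger of the two small lengths divided by $h$, so the rescaled entry is $\le\beta h^{2-2H}$ by $(H_2)$. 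Second, entrywise $|\widetilde B_{km}|\le\beta\,|t_{J[k]}-t_{J[k]-1}|\,|t_{\bar J[m]}-t_{\bar J[m-1]}|^{2H-1}$: one distinguishes whether the $k$-th small increment is absorbed into the $m$-th merged block, adjacent to it, or separated from it by a gap $g$, and in each case the bound follows from Lemma \ref{estimates} together with the elementary bounds $\le\beta\,\epsilon\ell g^{2H-2}$ and $\le\beta\,\epsilon(\ell+g)^{2H-1}$ on the relevant second difference of $x\mapsto x^{2H}$ (the first used when $g>\ell$, the second when $g\le\ell$, $\epsilon,\ell$ being the small and merged-large lengths; a mirror version of Lemma \ref{estimates}, obtained by direct computation, covers the case where the small increment lies to the right). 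Dividing by $|t_{J[k]}-t_{J[k]-1}|^{H}|t_{\bar J[m]}-t_{\bar J[m-1]}|^{H}$ and invoking $(H_2)$ yields $\|\Lambda_J^{-1}\widetilde B\,\Lambda'^{-1}\|\le\beta h^{1-H}$. Third, the merged large increments live over a genuine partition $0=t_{\bar J[0]}<\dots<t_{\bar J[q]}$, so local non-determinism \eqref{localNd}–\eqref{VarLow} shows $\Lambda'^{-1}\Sigma'\Lambda'^{-1}$ has spectrum in $[L_H,q]$, whence $\|\Lambda'\Sigma'^{-1}\Lambda'\|\le 1/L_H$ and $\det\Sigma'$ is comparable to $\prod_m|t_{\bar J[m]}-t_{\bar J[m-1]}|^{2H}$. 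Combining the three facts, $\Lambda_J^{-1}\widetilde B\Sigma'^{-1}\widetilde B^{T}\Lambda_J^{-1}$ has norm $\le\beta h^{2-2H}$, so $\Lambda_J^{-1}S\Lambda_J^{-1}=I+E_2$ with $\|E_2\|\le\beta h^{2-2H}$.

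Assume first that $h$ is small enough that $\beta h^{2-2H}\le\tfrac12$. A Neumann expansion of $(I+E_2)^{-1}$ and $\det(I+E_2)=1+O(\|E_2\|)$ give (a) from $\det\Sigma=\det\Sigma'\,\big(\prod_i|t_{J[i]}-t_{J[i]-1}|^{2H}\big)\det(I+E_2)$, and (b)–(c) from $\Sigma^{-1}|_{J\times J}=\Lambda_J^{-1}(I+E_2)^{-1}\Lambda_J^{-1}$; for (d), $\Sigma^{-1}|_{\bar J\times\bar J}=\Sigma'^{-1}+\Sigma'^{-1}\widetilde B^{T}S^{-1}\widetilde B\Sigma'^{-1}$, and the correction, rescaled by $\Lambda'\cdot\Lambda'$, has norm $\le\beta h^{2-2H}$, which together with $\Sigma'^{-1}_{mm}\asymp|t_{\bar J[m]}-t_{\bar J[m-1]}|^{-2H}$ on the diagonal and with the entrywise bound below off the diagonal produces the claimed relative error. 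For $h\in[\tfrac12,1)$ all of these quantities are bounded by a constant depending only on $H,T,p,q$ (again by \eqref{eigenvalues}), while $h^{2-2H}\ge(\tfrac12)^{2-2H}$, so every statement holds after enlarging $\beta$. Finally (e): its mixed case comes from $-S^{-1}\widetilde B\Sigma'^{-1}$, whose $(k,m)$-entry is comparable to $-|t_{J[k]}-t_{J[k]-1}|^{-2H}\,\widetilde B_{km}\,|t_{\bar J[m]}-t_{\bar J[m-1]}|^{-2H}$, into which $|\widetilde B_{km}|\le\beta\,|t_{J[k]}-t_{J[k]-1}|\,|t_{\bar J[m]}-t_{\bar J[m-1]}|^{2H-1}$ plugs to give exactly the bound of (e); the $\bar J\times\bar J$ case follows once one has, with $\ell_m=|t_{\bar J[m]}-t_{\bar J[m-1]}|$, the decay $|\Sigma'^{-1}_{mm'}|\le\beta\big(\max(\ell_m\ell_{m'}^{2H-1},\ell_m^{2H-1}\ell_{m'})\big)^{-1}$, obtained from the cofactor formula for $\Sigma'^{-1}_{mm'}$, the determinant estimate for $\Sigma'$, and Lemma \ref{estimates}.

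The linear-algebra skeleton above is routine. The main obstacle is the scale-free, configuration-uniform covariance estimates of the second paragraph: proving $|\widetilde B_{km}|\le\beta\,|t_{J[k]}-t_{J[k]-1}|\,|t_{\bar J[m]}-t_{\bar J[m-1]}|^{2H-1}$ for \emph{every} relative position of a (possibly tiny) small increment and a (possibly very long) merged large increment, and pinning down the exponents in the off-diagonal decay of $\Sigma'^{-1}$ needed for (e); both force one beyond the ready-made Lemma \ref{estimates} and into direct estimates on second differences of $x\mapsto x^{2H}$.
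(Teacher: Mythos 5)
Your route is genuinely different from the paper's: the paper proves (a.) and (b.) by expanding $\det\Sigma$ and the cofactors through the Leibniz formula, separating the permutations that stabilise $\bar J$ from those that do not, whereas you pass to the vector of small increments and \emph{merged} large increments via a unimodular matrix $M$ and then run a Schur-complement/Neumann-series perturbation argument. The analytic core of your argument (the rescaled bounds on $A$, $\widetilde B$ and $\Sigma'$, resting on Lemma \ref{estimates}, second differences of $x\mapsto x^{2H}$, and \eqref{localNd}--\eqref{VarLow}) is correct and is the same content the paper's permutation expansion uses. However, there is a concrete false step in the linear algebra. Because a merged interval $(\bar J[m-1],\bar J[m]]$ may contain an index of $J$, the column of $M$ indexed by $J[i]$ carries a $1$ both in the small row $i$ and in the merged row $m(i)$; hence, writing $W:=(M\Sigma M^{T})^{-1}$, the identity $\Sigma^{-1}=M^{T}WM$ gives
$$\Sigma^{-1}_{J[i],J[j]}=W_{ij}+W_{i,\,p+m(j)}+W_{p+m(i),\,j}+W_{p+m(i),\,p+m(j)},$$
and similarly for the mixed entries. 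Your claimed identity $\Sigma^{-1}|_{J\times J}=\Lambda_J^{-1}(I+E_2)^{-1}\Lambda_J^{-1}$, on which (b.), (c.) and the mixed case of (e.) rest, is therefore wrong as written (only the $\bar J\times\bar J$ formula is exact, since no small row meets a column indexed by $\bar J$). The extra terms are in fact harmless: the cross block contributes a relative error $O(h)$ and the $\bar J\times\bar J$ block one of order $O(h^{2H})$, both dominated by $h^{2-2H}$ precisely because $H>\frac12$; but this comparison of exponents is a necessary step, not ``pure bookkeeping'', and it is absent from your write-up.

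The second gap concerns the regime $h\in[\tfrac12,1)$, or more precisely $\beta h^{2-2H}>\tfrac12$. For (a.)--(d.) the crude fallback works once you replace the appeal to \eqref{eigenvalues} by the quadratic-form comparison $L_H\Lambda^2\le\Sigma\le(p+q)\Lambda^2$ from \eqref{localNd}--\eqref{VarLow}, which yields $\Sigma^{-1}_{ii}\asymp|t_i-t_{i-1}|^{-2H}$ and, by Cauchy--Schwarz, $|\Sigma^{-1}_{ij}|\le\beta|t_i-t_{i-1}|^{-H}|t_j-t_{j-1}|^{-H}$. But item (e.) is \emph{strictly stronger} than Cauchy--Schwarz when the two increments have very different lengths: for $i\in J$, $j\in\bar J$ it amounts to $|\Sigma^{-1}_{ij}|\le\beta\,|t_i-t_{i-1}|^{-H}|t_j-t_{j-1}|^{-H}\bigl(|t_i-t_{i-1}|/|t_j-t_{j-1}|\bigr)^{1-H}$, with a ratio that can be arbitrarily small. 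Your derivation of (e.) goes through the Neumann expansion of $S^{-1}$, which you only justify below the threshold $\beta h^{2-2H}\le\tfrac12$; above it you have no argument. A fix within your framework is to bound $S^{-1}$ directly for all $h<1$, e.g. $(S^{-1})_{ii}=(e_{J[i]}-e_{\bar J[m(i)]})^{T}\Sigma^{-1}(e_{J[i]}-e_{\bar J[m(i)]})\le\beta|t_{J[i]}-t_{J[i]-1}|^{-2H}$ by the quadratic-form comparison, followed by Cauchy--Schwarz inside $S^{-1}$, and then rerun your estimate of $-S^{-1}\widetilde B\Sigma'^{-1}$ without the Neumann step.
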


We will only do the proof of items (a.) and (b.) since the proof of the other items is similar after writing $\Sigma^{-1}_{ij}=\frac{(-1)^{i+j}C_{ij}}{det(\Sigma)}$, where $C$ is the cofactor matrix of $\Sigma$. Throughout the proof, we denote by $\Theta(h)$ a generic function that satisfies $|\Theta(h)|\leq \beta h^{2-2H}$.

\medskip

\begin{proof}[Proof\nopunct]\textit{of item (a.):}

The proof is divided into two steps. In the first step, we show an intermediate result 
\begin{eqnarray}\label{interm2}&&det\left(\Sigma\left(\left(t_{\bar J[i]},t_{\bar J[i]-1}\right)_{i\in\llbracket 1,q\rrbracket}\right)\right)=det(\Sigma')(1+\Theta(h)),\notag\\
\end{eqnarray} 
where the notation $\Sigma((t^i_1,t^i_{2}))_{i\leq m}$ is defined in \eqref{covariance}, before finishing the actual proof in the second step. 

\medskip

\textit{\underline{Step 1: Proof of \eqref{interm2}}}
Thanks to hypothesis $(H_1)$, we have that for all $i\in\llbracket 1,q\rrbracket$,

 $$\left|(t_{\bar J[i]}-t_{\bar J[i-1]})-(t_{\bar J[i]}-t_{\bar J[i]-1})\right|\leq h\min_{l\in \llbracket 1,q\rrbracket}\left|t_{\bar J[l]}-t_{\bar J[l]-1}\right|.$$

Furthermore, thanks to Lemma \ref{estimates} and the H\"older inequality, we have for all $i,j\in\llbracket 1,q\rrbracket$,
\begin{eqnarray*}
&&\left|\mathbb{E}\left[\left(B_{t_{J[j]}}-B_{t_{J[j]-1}}\right)\left(B_{t_{J[i]}}-B_{t_{J[i]-1}}\right)\right]\right.\\
&&\left.-\mathbb{E}\left[\left(B_{t_{J[j]}}-B_{t_{J[j-1]}}\right)\left(B_{t_{J[i]}}-B_{t_{J[i-1]}}\right)\right]\right|\\
&\leq&h\min_{j\in \llbracket 1,q\rrbracket}\left|t_{\bar J[l]}-t_{\bar J[l]-1}\right|\left(|t_{\bar J[i]}-t_{\bar J[i]-1}|^{2H-1}+|t_{\bar J[j]}-t_{\bar J[j]-1}|^{2H-1}\right)\\
&&+ h^{2H}\min_{j\in \llbracket 1,q\rrbracket}\left|t_{\bar J[l]}-t_{\bar J[l]-1}\right|^{2H}\\
&\leq& 3h|t_{\bar J[j]}-t_{\bar J[j]-1}|^H|t_{\bar J[i]}-t_{\bar J[i]-1}|^H.
\end{eqnarray*}

We then have 
\begin{eqnarray*}
&&det\left(\Sigma\left(\left(t_{\bar J[i]},t_{\bar J[i]-1}\right)_{i\in\llbracket 1,q\rrbracket}\right)\right)\\
&=&\sum_{\tau\in \mathfrak S_{\llbracket 1,q\rrbracket}}sgn(\tau)\prod_{i=1}^{q}\mathbb{E}\left[\left(B_{t_{\bar J[j]}}-B_{t_{\bar J[j]-1}}\right)\left(B_{t_{\bar J[\tau(j)]}}-B_{t_{\bar J[\tau(j)]-1}}\right)\right]\\
&=&det\left(\Sigma\left(\left(t_{\bar J[i]},t_{\bar J[i-1]}\right)_{i\in\llbracket 1,q\rrbracket}\right)\right)+G(h),
\end{eqnarray*}
where $\mathfrak S_{\llbracket 1,q\rrbracket}$ is the permutation group of $\llbracket 1,q\rrbracket$ and
\begin{eqnarray*}|G(h)|&\leq& \sum_{r=1}^{q}\binom{q}{r}(3h)^r\sum_{\tau\in\mathfrak S_{\llbracket 1,q\rrbracket}}\prod_{i=1}^{q}|t_{\bar J[i]}-t_{\bar J[i]-1}|^H|t_{\bar J[\tau(i)]}-t_{\bar J[\tau(i)]-1}|^H\\
&\leq& h3^{q}2^{q}q!\prod_{i=1}^{q}|t_{\bar J[i]}-t_{\bar J[i]-1}|^{2H}\\
&\leq& h^{2-2H}\frac{3^{q}2^{q}(q)!}{k_H}det\left(\Sigma'\right),
\end{eqnarray*}
where the last inequality is obtained with \eqref{Sandwich}. This conclude the proof of the first step.

\medskip

\textit{\underline{Step 2: Proof of \eqref{interm1}}}
We factorise the determinant as

\begin{eqnarray*}
det\left(\Sigma\right)
&=&det\left(\Sigma\left(\left(t_{\bar J[i]-1},t_{\bar J[i]}\right)_{i\in\llbracket 1,q\rrbracket}\right)\right)det\left(\Sigma\left(\left(t_{ J[i]-1},t_{ J[i]}\right)_{i\in\llbracket 1,p\rrbracket}\right)\right)\\
&&+\sum_{\tau\in\mathfrak H}sgn(\tau)\prod_{j=1}^{p+q}\mathbb{E}\left[\left(B_{t_j}-B_{t_{j-1}}\right)\left(B_{t_{\tau(j)}}-B_{t_{\tau(j)-1}}\right)\right]\\
\end{eqnarray*}
where $\mathfrak H$ is the set of permutations of $\llbracket 1,p+q\rrbracket$ that does not stabilize $\bar J$, i.e. there exists at least one element in $\bar J$ whose image under the permutation belongs to $J$.

\begin{itemize}
\item Similarly as in step 1, we have that 
\begin{eqnarray*}
&&det\left(\Sigma\left(\left(t_{J[i]},t_{ J[i]-1}\right)_{i\in\llbracket 1,p\rrbracket}\right)\right)=\prod_{i=1}^p|t_{J[i]}-t_{J[i]-1}|^{2H}\\
&&+\sum_{\kappa\in\mathfrak  S_{\llbracket 1,p\rrbracket},\kappa\neq Id}sgn(\kappa)\prod_{j=1}^p\mathbb{E}\left[\left(B_{t_{J[j]}}-B_{t_{J[j]-1}}\right)\left(B_{t_{J[\kappa(j)]}}-B_{t_{J[\kappa(j)]-1}}\right)\right].
\end{eqnarray*}
Let $R(j,\tau):=\mathbb{E}\left[\left(B_{t_{J[j]}}-B_{t_{J[j]-1}}\right)\left(B_{t_{J[\kappa(j)]}}-B_{t_{J[\kappa(j)]-1}}\right)\right]$.
For any $\kappa\in\mathfrak S_{\llbracket 1,p\rrbracket}$, we have that
\begin{equation*}\left\{
      \begin{array}{cll}
        R(j,\tau)&=& |t_{J[j]}-t_{J[j]-1}|^{2H} \textit{ if } \kappa(j)=j\\
        |R(j,\tau)|&\leq& \frac{|t_{J[j]}-t_{J[j]-1}||t_{J[\kappa(j)]}-t_{J[\kappa(j)]-1}|}{|t_{max\{j-1,\kappa(j)-1\}}-t_{min\{j,\kappa(j)\}}|^{2-2H}} \textit{ if } \kappa(j)\neq j,\\
      \end{array}
    \right.
    \end{equation*}
where the inequality in the second case comes from Lemma \ref{estimates}.
 Thanks to the hypothesis $(H_2)$, we have that if $\tau(j)\neq j$, there is at least one element $k\in \bar J$ such that $k\in \llbracket t_{min\{j,\tau(j)\}+1},t_{max\{j-1,\tau(j)-1\}}\rrbracket$. Hence, 
\begin{eqnarray*}
&&\frac{|t_{J[j]}-t_{J[j]-1}||t_{J[\kappa(j)]}-t_{J[\kappa(j)]-1}|}{|t_{max\{j-1,\kappa(j)-1\}}-t_{min\{j,\kappa(j)\}}|^{2-2H}}\\
&\leq& h^{2-2H}|t_{J[j]}-t_{J[j]-1}|^H|t_{J[\kappa(j)]}-t_{J[\kappa(j)]-1}|^H.
\end{eqnarray*}
Finally, for a permutation $\kappa\in\mathfrak S_{\llbracket 1,p\rrbracket}$, let $T_{\kappa}\subset\llbracket 1,p\rrbracket$ denotes the set of points $j$ such that $\kappa(j)\neq j$. We have 

\begin{eqnarray*}
&&\left|\sum_{\kappa\in\mathfrak  S_{\llbracket 1,p\rrbracket},\kappa\neq Id}sgn(\kappa)\prod_{j=1}^p\mathbb{E}\left[\left(B_{t_{J[j]}}-B_{t_{J[j]-1}}\right)\left(B_{t_{J[\kappa(j)]}}-B_{t_{J[\kappa(j)]-1}}\right)\right]\right|\\
&\leq&\sum_{\kappa\in\mathfrak  S_{\llbracket 1,p\rrbracket},\kappa\neq Id}h^{Card(T_\kappa)(2-2H)}\prod_{j=1}^p|t_{J[j]}-t_{J[j]-1}|^H|t_{J[\tau(j)]}-t_{J[\tau(j)]-1}|^H\\
&\leq& h^{2-2H}p!\prod_{j=1}^p|t_{J[j]}-t_{J[j]-1}|^{2H}\\
&\leq& \Theta(h)\prod_{j=1}^p|t_{J[j]}-t_{J[j]-1}|^{2H}.
\end{eqnarray*}

\item Thanks to the first step, we also have that 
\begin{eqnarray*}
det\left(\Sigma\left(\left(t_{\bar J^1_{\sigma}[i]},t_{\bar J^1_{\sigma}[i]-1}\right)_{i\in\llbracket 1,p+q\rrbracket}\right)\right)=det\left(\Sigma'\right)(1+\Theta(h)).
\end{eqnarray*}
This combined with the previous point leads to
\begin{eqnarray*}
&&det\left(\Sigma\left(\left(t_{\bar J[i]},t_{\bar J[i]-1}\right)_{i\in\llbracket 1,q\rrbracket}\right)\right)det\left(\Sigma\left(\left(t_{ J[i]},t_{ J[i]-1}\right)_{i\in\llbracket 1,p\rrbracket}\right)\right)\\
&=&det\left(\Sigma'\right)\prod_{i=1}^p|t_{J[i]}-t_{J[i]-1}|^{2H}
\left(1+\Theta(h)\right).
\end{eqnarray*}

\item Finally, let us analyze the expression
$$\sum_{\tau\in\mathfrak H}sgn(\tau)\prod_{j=1}^{p+q}\mathbb{E}\left[\left(B_{t_j}-B_{t_{j-1}}\right)\left(B_{t_{\tau(j)}}-B_{t_{\tau(j)-1}}\right)\right].$$

Since $\tau$ does not stabilize $\bar J$, there is a couple $(k_0,k_1)\in \bar J\times J$ such that $\tau(k_0)\in J$ and $\tau(k_1)\in \bar J$. Thanks again to Lemma \ref{estimates}, we have 
\begin{eqnarray*}
&&\mathbb{E}\left[\left(B_{t_{k_0}}-B_{t_{k_0-1}}\right)\left(B_{t_{\tau(k_0)}}-B_{t_{\tau(k_0)-1}}\right)\right]\\
&\leq& |t_{\tau(k_0)}-t_{\tau(k_0)-1}||t_{k_0}-t_{k_0-1}|^{2H-1}\\
&\leq& h^{1-H}|t_{k_0}-t_{k_0-1}|^H|t_{\tau(k_0)}-t_{\tau(k_0)-1}|^H.
\end{eqnarray*}

and
\begin{eqnarray*}
&&\mathbb{E}\left[\left(B_{t_{k_1}}-B_{t_{k_1-1}}\right)\left(B_{t_{\tau(k_1)}}-B_{t_{\tau(k_1)-1}}\right)\right]\\
&\leq& h^{1-H}|t_{k_1]}-t_{k_1-1}|^H|t_{\tau(k_1)}-t_{\tau(k_1)-1}|^H.
\end{eqnarray*}

Also, for $k\neq k_0,k_1$, we can use the H\"older inequality to obtain
\begin{eqnarray*}&&\mathbb{E}\left[\left(B_{t_{k}}-B_{t_{k-1}}\right)\left(B_{t_{\tau(k)]}}-B_{t_{\tau(k)-1}}\right)\right]\\
&\leq& |t_{k}-t_{k-1}|^H|t_{\tau(k)}-t_{\tau(k)-1}|^H. 
\end{eqnarray*}

Putting all these elements together, we have
\begin{eqnarray*}
&&\left|\sum_{\tau\in\mathfrak H}sgn(\tau)\prod_{j=1}^{p+q}\mathbb{E}\left[\left(B_{t_j}-B_{t_{j-1}}\right)\left(B_{t_{\tau(j)}}-B_{t_{\tau(j)-1}}\right)\right]\right|\\
&\leq&(p+q)!h^{2-2H}\prod_{i=1}^{p+q}|t_{k}-t_{k-1}|^H|t_{\tau(k)}-t_{\tau(k)-1}|^H\\
&\leq&(p+q)!h^{2-2H}\prod_{i=1}^{p+q}|t_{k}-t_{k-1}|^{2H}\\
&\leq&(p+q)!h^{2-2H}\prod_{i=1}^{q}|t_{\bar J[k]}-t_{\bar J[k-1]}|^{2H}\prod_{i=1}^p|t_{J[k]}-t_{ J[k-1]}|^{2H}\\
&\leq&(p+q)!\frac{h^{2-2H}}{k_H}det\left(\Sigma\left(\left(t_{\bar J[i]},t_{\bar J[i-1]}\right)_{i\in\llbracket 1,q\rrbracket}\right)\right)\prod_{i=1}^p|t_{J[i]}-t_{J[i]-1}|^{2H}.
\end{eqnarray*}
where the last inequality follows from the local non-deteminism property, see \eqref{Sandwich}. Thus, the proof is complete.
\end{itemize}
\end{proof}

\begin{proof}[Proof\nopunct]\textit{of \eqref{Cofacteur1}:}
Let $i\in J$. We have that $\Sigma^{-1}_{ii}=\frac{C_{ii}}{det(\Sigma)}$, with $C$ the cofactor matrix of $\Sigma$.
\begin{itemize}
\item We have established that 
$$det(\Sigma)=det(\Sigma')\prod_{k=1}^p|t_{J[i]}-t_{J[k-1]}|^{2H}(1+\Theta(h)).$$
\item Proceeding in the exact same way, we can show that 
$$C_{ii}=det(\Sigma')\prod_{k\neq i}|t_{J[k]}-t_{J[k-1]}|^{2H}(1+\Theta(h)).$$
\end{itemize} 
This complete the proof of \eqref{Cofacteur1}.
\end{proof}

We will also need the following elementary result. 
\begin{lemma}\label{Diagonalisation}

Let $0\leq s_0<s_1\leq s_2<s_3\ldots\leq s_{2m-2}< s_{2m-1}\leq T$ and $\Sigma:=\Sigma\left((s_{2i-1},s_{2i-2})_{i\in\llbracket 1,m\rrbracket}\right)$ be as in \ref{covariance}.
 Then, there is a constant $D=D(m,H,T)>0$ independent of the choice of $s_i$ such that for all $x\in\mathbb{R}^m$,
$$x^T\Sigma^{-1}x\geq D(m,H,T)\sum_{i=1}^m\frac{x_i^2}{|s_{2i-1}-s_{2i-2}|^{2H}}.$$

\end{lemma}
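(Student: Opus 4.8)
The plan is to derive the estimate from a cheap \emph{upper} bound on the covariance matrix $\Sigma$ together with the order-reversing property of matrix inversion, rather than from local non-determinism (which controls $\Sigma^{-1}$ from above and is the ``hard'' direction used elsewhere in the paper). Write $d_i:=s_{2i-1}-s_{2i-2}$ and $W_i:=B_{s_{2i-1}}-B_{s_{2i-2}}$ for $i\in\llbracket 1,m\rrbracket$, and set $\Lambda:=\mathrm{diag}\big(d_1^{2H},\dots,d_m^{2H}\big)$, so that the right-hand side of the claim is $D\,x^{T}\Lambda^{-1}x$. From the covariance function of $B$ one has $\mathbb{E}[W_i^{2}]=d_i^{2H}$, hence by the Cauchy--Schwarz inequality, for every $u\in\mathbb{R}^{m}$,
$$u^{T}\Sigma u=\mathrm{Var}\!\Big(\sum_{i=1}^{m}u_iW_i\Big)=\sum_{i,j=1}^{m}u_iu_j\,\mathbb{E}[W_iW_j]\ \le\ \Big(\sum_{i=1}^{m}|u_i|\,d_i^{H}\Big)^{2}\ \le\ m\sum_{i=1}^{m}u_i^{2}d_i^{2H}=m\,u^{T}\Lambda u .$$
(Alternatively this follows from \eqref{VarLow} applied to the consecutive partition $s_0<s_1\le\dots\le s_{2m-1}$ with coefficient $0$ on every ``gap'' increment $(s_{2i-1},s_{2i})$.) Thus $\Sigma\preceq m\Lambda$ in the sense of symmetric matrices, with a constant depending only on $m$.

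It then remains to invert this inequality. I would use the variational identity, valid for any positive definite $\Sigma$, namely $x^{T}\Sigma^{-1}x=\max_{v\neq 0}\frac{(v^{T}x)^{2}}{v^{T}\Sigma v}$, which is just Cauchy--Schwarz in the inner product induced by $\Sigma$. Taking the competitor $v=\Lambda^{-1}x$ (legitimate since $\Lambda$ is diagonal with positive entries, and the claim is trivial when $x=0$) and using $\Sigma\preceq m\Lambda$ gives
$$x^{T}\Sigma^{-1}x\ \ge\ \frac{(x^{T}\Lambda^{-1}x)^{2}}{(\Lambda^{-1}x)^{T}\Sigma(\Lambda^{-1}x)}\ \ge\ \frac{(x^{T}\Lambda^{-1}x)^{2}}{m\,(\Lambda^{-1}x)^{T}\Lambda(\Lambda^{-1}x)}\ =\ \frac{1}{m}\,x^{T}\Lambda^{-1}x\ =\ \frac{1}{m}\sum_{i=1}^{m}\frac{x_i^{2}}{|s_{2i-1}-s_{2i-2}|^{2H}},$$
which is the asserted bound with $D=1/m$ (in particular independent of the $s_i$, so it may be absorbed into a constant $D(m,H,T)$). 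Equivalently, one can invoke the operator-antitonicity of $A\mapsto A^{-1}$ to pass directly from $\Sigma\preceq m\Lambda$ to $\Sigma^{-1}\succeq\frac{1}{m}\Lambda^{-1}$.

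There is no genuine obstacle here: the lemma is elementary once one observes that it is the ``easy'' direction — a lower bound on $x^{T}\Sigma^{-1}x$ only requires the trivial upper bound $\Sigma\preceq m\Lambda$, in contrast to the density-type estimates in the paper that require lower bounds on $\Sigma$ via \eqref{localNd}. The one point deserving a little care is keeping the direction of the Loewner order straight when passing to the inverse; everything else is Cauchy--Schwarz.
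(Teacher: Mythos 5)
Your proof is correct, and it takes a genuinely different --- and considerably simpler --- route than the paper's. You observe that the desired lower bound on $x^{T}\Sigma^{-1}x$ is dual to an \emph{upper} bound on $\Sigma$ in the Loewner order: the triangle inequality in $L^{2}$ together with Cauchy--Schwarz gives $\Sigma\preceq m\Lambda$ with $\Lambda=\mathrm{diag}\left(|s_{2i-1}-s_{2i-2}|^{2H}\right)$ (this is exactly the elementary estimate \eqref{VarLow}, which holds verbatim for non-overlapping increments), and operator antitonicity of inversion --- or, equivalently, your variational identity with the competitor $v=\Lambda^{-1}x$ --- then yields $\Sigma^{-1}\succeq\frac{1}{m}\Lambda^{-1}$, i.e.\ the claim with the explicit constant $D=1/m$, independent of $H$ and $T$. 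The paper instead argues by induction on $m$, ordering the increments by size, introducing a cutoff parameter $\kappa$, splitting into several cases, and invoking the entrywise bounds on $\Sigma^{-1}$ adapted from Lemma \ref{decompDet} together with the eigenvalue estimates \eqref{eigenvalues}; this is substantially longer and produces a far less explicit constant. Your route correctly identifies that only the ``easy'' direction is needed here: local non-determinism \eqref{localNd}, the hard direction, is what controls $\det\Sigma$ (hence the normalising prefactor via \eqref{Sandwich}), not the quadratic form in the exponent. The only point worth stating explicitly is that $\Sigma$ is positive definite --- guaranteed by \eqref{localNd} since every $s_{2i-1}-s_{2i-2}>0$ --- so that $\Sigma^{-1}$ and the variational identity make sense; this is anyway presupposed by the statement of the lemma.
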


\begin{proof}
Notice that the result is trivially valid when $m=1$. We proceed by induction on $m$. Assume now that the result is true for every $k\leq m$ for some $m\in\mathbb{N}^*$,
 and let us consider $(s_i)_{i\in\llbracket 0, 2m+1\rrbracket}$ verifying the same condition as in the statement.
 Without loss of generality, we suppose that increments are ordered such that $\forall i\in\llbracket 1,m\rrbracket,~|s_{2i-1}-s_{2i-2}|\geq |s_{2i+1}-s_{2i}|$.

\smallbreak

Notice first that if $l\in\llbracket1,m\rrbracket$, a slight modification of \eqref{Cofacteur3} and \eqref{Cofacteur4} implies the existence of a function $\Theta$, that is independent from the choice of $s_i$, 
 such that $|\Theta(h)|\leq \beta h^{2-2H}$ and for all $i,j>l$,
\begin{equation}\label{CofacteurBis1}
\Sigma^{-1}\left((s_{2k-1},s_{2k-2})_{k\in\llbracket 1,m+1\rrbracket}\right)_{ij}=\Sigma^{-1}\left((s_{2k-1},s_{2k-2})_{k\in\llbracket l+1,m+1\rrbracket}\right)_{ij}(1+\Theta(h_l)),
\end{equation}
with $h_l=\frac{|s_{2l+1}-s_{2l}|}{|s_{2l-1}-s_{2l-2}|}$, and 
\begin{equation}\label{CofacteurBis2}
\left|\Sigma^{-1}\left((s_{2k-1},s_{2k-2})_{k\in\llbracket 1,m+1\rrbracket}\right)_{ij}\right|\leq\frac{K}{|s_{2i-1}-s_{2i-2}|^H|s_{2j-1}-s_{2j-2}|^{H}}.
\end{equation}

\smallbreak

 We introduce a cut-off parameter $\kappa>0$ such that
 \begin{equation}
 \label{kappa}
 \max(\sqrt{\kappa},|\Theta(\kappa)|)\leq\left(\frac{D(m)}{16K(m+1)^2}\right)
 \end{equation}
and let $x\in\mathbb{R}^{m+1}$. We will distinguish several cases:

\begin{enumerate}
 \item[(i.)] Assume that for all $i\in\llbracket 1,m\rrbracket$, $\frac{|s_{2i+1}-s_{2i}|}{|s_{2i-1}-s_{2i-2}|}\geq\kappa$. Then we have,
 according to \eqref{eigenvalues}, $$x^T\Sigma^{-1}x\geq \frac{1}{(m+1)|s_1-s_0|^{2H}}\sum_{i=1}^{m+1}x_i^2\geq\frac{\kappa^{2H(m+1)}}{(m+1)}\sum_{i=1}^{m+1}\frac{x_i^2}{|s_{2i-1}-s_{2i-2}|^{2H}}.$$
\item[(ii.)] Assume now that the previous condition does not hold true and let $i_0=\min\left\{i\in\llbracket 1,m\rrbracket |\frac{|s_{2i+1}-s_{2i}|}{|s_{2i-1}-s_{2i-2}|}<\kappa\right\}.$ 
\end{enumerate}

Let us again distinguish two sub-cases:

\begin{enumerate}
\item[(ii.1)] Assume that 
\begin{equation}\label{conditionMax}\max_{i\in\llbracket 1,i_0\rrbracket}\frac{x_i^2}{|s_{2i-1}-s_{2i-2}|^{2H}}\geq\kappa\max_{i\in\llbracket i_0+1,m+1\rrbracket}\frac{x_i^2}{|s_{2i-1}-s_{2i-2}|^{2H}}.\end{equation}
In this case, we have again thanks to \eqref{eigenvalues},
\begin{eqnarray*}
x^T\Sigma^{-1}x&\geq& \frac{1}{m+1}\frac{1}{|s_1-s_0|^{2H}}\sum_{i=1}^{m+1}x_i^2\\
&\geq& \frac{1}{m+1}\max_{i\in\llbracket 1,i_0\rrbracket}\frac{x_i^2}{|s_{2i-1}-s_{2i-2}|^{2H}}\frac{|s_{2i-1}-s_{2i-2}|^{2H}}{|s_1-s_0|^{2H}}\\
&\geq&\frac{\kappa^{2H(i_0-1)}}{(m+1)}\max_{i\in\llbracket 1,i_0\rrbracket}\frac{x_i^2}{|s_{2i-1}-s_{2i-2}|^{2H}}\\
&\geq&\frac{\kappa^{2H(i_0-1)+1}}{(m+1)}\max_{i\in\llbracket 1,m+1\rrbracket}\frac{x_i^2}{|s_{2i-1}-s_{2i-2}|^{2H}}\\
&\geq&\frac{\kappa^{2H(m+1)}}{(m+1)^2}\sum_{i=1}^{m+1}\frac{x_i^2}{|s_{2i-1}-s_{2i-2}|^{2H}},
\end{eqnarray*}
where the second inequality comes from the fact that for all $i\leq i_0$, $\frac{|s_{2i-1}-s_{2i-2}|^{2H}}{|s_1-s_0|^{2H}}\geq\kappa^{2H(i-1)}$.

\item[(ii.2)] Finally, if the condition $(ii.1)$ does not hold true, we have thanks to \eqref{CofacteurBis1},
\begin{eqnarray}\label{finalCase}
x^T\Sigma^{-1}x&=&\sum_{i,j>i_0}\Sigma^{-1}_{ij}x_ix_j+\sum_{i,j\leq i_0}\Sigma^{-1}_{ij}x_ix_j+2\sum_{i< i_0,j\geq i_0}\Sigma^{-1}_{ij}x_ix_j\notag\\
&\geq&\sum_{i,j\geq i_0}\Sigma^{-1}\left((s_{2k-1},s_{2k-2})_{k\in \llbracket i_0+1,m+1\rrbracket}\right)_{ij}x_ix_j-2\left|\sum_{i< i_0,j\geq i_0}\Sigma^{-1}_{ij}x_ix_j\right|\notag\\
&&-|\Theta(\kappa)|\sum_{i,j\geq i_0}\left|\Sigma^{-1}\left((s_{2k-1},s_{2k-2})_{k\in \llbracket i_0+1,m+1\rrbracket}\right)_{ij}x_ix_j\right|\notag\\
\end{eqnarray}

Using the induction hypothesis, we have that 
\begin{eqnarray*}\sum_{i,j\geq i_0}\Sigma^{-1}\left((s_{2k-1},s_{2k-2})_{k\in \llbracket i_0+1,m+1\rrbracket}\right)_{ij}x_ix_j&\geq &D(m)\sum_{i=i_0+1}^{m+1}\frac{x_i^2}{|s_{2i-1}-s_{2i-2}|^{2H}}\\
&\geq&\frac{D(m)}{2}\sum_{i=1}^{m+1}\frac{x_i^2}{|s_{2i-1}-s_{2i-2}|^{2H}}.
\end{eqnarray*}

We also have, thanks to \eqref{CofacteurBis2},
\begin{eqnarray*}
&&\Theta(\kappa)\sum_{i,j\geq i_0}\left|\Sigma^{-1}\left((s_{2k-1},s_{2k-2})_{k\in \llbracket i_0+1,m+1\rrbracket}\right)_{ij}x_ix_j\right|\\&\leq& \Theta(\kappa)K(m-i_0)\sum_{i=i_0+1}^{m+1}\frac{x_i^2}{|s_{2i-1}-s_{2i-2}|^{2H}}\\
&\leq&\frac{D(m)}{16(m+1)}\sum_{i=i_0+1}^{m+1}\frac{x_i^2}{|s_{2i-1}-s_{2i-2}|^{2H}},
\end{eqnarray*}
where the last line follows from the definition of $\kappa$.

 Similarly, we have 
\begin{eqnarray*}
\left|2\sum_{i< i_0,j\geq i_0}\Sigma^{-1}_{ij}x_ix_j\right|&\leq& 2\sum_{i\leq i_0,j>i_0}\frac{K|x_ix_j|}{|s_{2i-1}-s_{2i-2}|^H|s_{2j-1}-s_{2j-2}|^{H}}\\
&\leq&2(m+1)^2\max_{i> i_0}\frac{x_i^2}{|s_{2i-1}-s_{2i-2}|^{2H}}\sqrt{\kappa}\\
&\leq&2(m+1)^2\sqrt{\kappa}\sum_{i=1}^{m+1}\frac{x_i^2}{|s_{2i-1}-s_{2i-2}|^{2H}}\\
&\leq& \frac{D(m)}{8}\sum_{i=1}^{m+1}\frac{x_i^2}{|s_{2i-1}-s_{2i-2}|^{2H}},
\end{eqnarray*}
where the third inequality follows from the fact that the condition \eqref{conditionMax} is not satisfied, and where the last line follows from the definition of $\kappa$.
 Plugging all these elements into \eqref{finalCase} yields
$$x^T\Sigma^{-1}x\geq \frac{D(m)}{4}\sum_{i=1}^{m+1}\frac{x_i^2}{|s_{2i-1}-s_{2i-2}|^{2H}}.$$
\end{enumerate}
In every case, the result is valid for $k=m+1$, which completes the proof by induction.
\end{proof}

In order to prove Proposition \ref{mainBound}, we will also need the following result whose proof is trivial, and hence omitted.
\begin{lemma}\label{trivial}
Let $x\in\mathbb{R}$. Then
$ |e^x-1|\leq |x|e^{x\vee 0}.$
\end{lemma}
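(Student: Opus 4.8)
The plan is to reduce the inequality to a one-line consequence of the fundamental theorem of calculus. First I would write
$$
e^x - 1 = \int_0^x e^t \, dt,
$$
valid for every real $x$ with the convention $\int_0^x = -\int_x^0$ when $x < 0$. Taking absolute values and bounding the integrand by its supremum over the interval with endpoints $0$ and $x$ gives $|e^x - 1| \leq |x| \sup_t e^t$, where the supremum is over $t$ lying between $0$ and $x$. Since $t \mapsto e^t$ is increasing, that supremum equals $e^{\max(x,0)} = e^{x \vee 0}$, which is exactly the claimed bound.

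Alternatively, one can argue by cases using the mean value theorem. If $x \geq 0$, then $x \vee 0 = x$ and $|e^x - 1| = e^x - 1$; the mean value theorem gives $e^x - 1 = x e^c$ for some $c \in (0,x)$, and $e^c \leq e^x$ yields the inequality. If $x < 0$, then $x \vee 0 = 0$ and $|e^x - 1| = 1 - e^x$; again $1 - e^x = |x| e^c$ for some $c \in (x,0)$, and $e^c \leq e^0 = 1$ gives $|e^x - 1| \leq |x|$, which is the bound in this regime.

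There is essentially no obstacle here: the statement is elementary, and the only point requiring (minimal) care is the sign bookkeeping when $x < 0$, i.e. recording that the supremum of $e^t$ over $[x,0]$ is attained at $t = 0$, so that the exponential factor on the right-hand side is $e^0 = 1$ rather than something growing with $|x|$. This is precisely why the exponent in the statement is $x \vee 0$ rather than $|x|$, and it is also what makes the lemma useful in the later applications, where $x$ will typically be a (possibly negative) error term of small magnitude.
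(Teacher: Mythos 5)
Your proof is correct; both the integral representation $e^x-1=\int_0^x e^t\,dt$ with the supremum bound and the mean value theorem variant are valid, and the case analysis for $x<0$ is handled properly. The paper omits the proof as trivial, so there is nothing to compare against; your argument is the standard one and fills the gap correctly.
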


We are now ready to complete the proof of Proposition \ref{mainBound}.
As in the statements of Lemma \ref{decompDet} and Proposition \ref{mainBound}, let us write\\
 $\Sigma:=\Sigma(\left(t_i,t_{i-1}\right))_{i\in\llbracket 1,p+q\rrbracket}$, and
 $\Sigma':=\Sigma\left(\left(t_{\bar J[k-1]},t_{\bar J[k]}\right)_{k\in\llbracket 1,q\rrbracket}\right)$ to simplify the expressions.
 Recall also that $\Theta$ is a generic function which satisfies $|\Theta(h)| \leq \beta h^{2-2H}$.
 Moreover let us write $A$ (resp $A'$) for the $q+p$ (resp $q$)-dimensional vector whose first coordinate is $a$ and the other ones are $0$.

\begin{proof}[Proof\nopunct]\textit{of \eqref{Basic}:}
\begin{eqnarray*}&&\mathbb{E}\left[F\left(B_{t_1}-a,\ldots, B_{t_{p+q}}-a\right)\right]\\
&=&\frac{1}{(2\pi)^\frac{p+q}{2}\sqrt{det(\Sigma)}}\int_{\mathbb{R}^{p+q}}e^{-\frac{y^T\Sigma^{-1}y}{2}}dy_1\ldots dy_{p+q}\\
&&\times F\left(y_1-a,\ldots,y_1-a+\sum_{k=2}^iy_k,\ldots,y_1-a+\sum_{k=2}^{p+q}y_k\right)\\
&=&\frac{1}{(2\pi)^\frac{p+q}{2}\sqrt{det(\Sigma)}}\int_{\mathbb{R}^{p+q}}e^{-\frac{(y+A)^T\Sigma^{-1}(y+A)}{2}}\\
&&\times F\left(y_1,\ldots,y_1+\sum_{k=2}^iy_k,\ldots,y_1+\sum_{k=2}^{p+q}y_k\right)dy_1\ldots dy_{p+q},
\end{eqnarray*}
where we have performed the change of variable $y_1\rightarrow y_1-a$.

 We can now apply Lemma \ref{Diagonalisation} and the the local non-determinism property of the fractional Brownian motion (see Lemma \ref{Sandwich}) to obtain
\begin{eqnarray*}
&&\left|\mathbb{E}\left[F\left(B_{t_1}-a,\ldots, B_{t_{2p+q}}-a\right)\right]\right|\\
&\leq&\frac{\beta}{\left(2\pi\right)^\frac{p+q}{2}\prod_{i=1}^{p+q}|t_i-t_{i-1}|^H}\int_{\mathbb{R}^{p+q}}e^{-D\frac{(y_1+a)^2}{|t_1|^{2H}}-D\sum_{k=2}^{p+q}\frac{y_k^2}{|t_k-t_{k-1}|^{2H}}}\\
&&\times \left|F\left(y_1,\ldots,y_1+\sum_{k=2}^iy_k,\ldots,y_1+\sum_{k=2}^{p+q}y_k\right)\right|dy_1\ldots dy_{p+q}.
\end{eqnarray*}
We can now exploit hypothesis $(H_1)$: indeed, when $|y_1|\leq M\sum_{i=1}^p|y_{J[i]}|$, we have that
\begin{eqnarray*}
(a+y_1)^2\leq \frac{a^2}{4}\implies \sum_{i=1}^p\frac{y^2_{J[i]}}{|t_{J[i]}-t_{J[i]-1}|^{2H}}\geq\sum_{i=1}^p\frac{y^2_{J[i]}}{2|t_{J[i]}-t_{J[i]-1}|^{2H}}+\frac{y_1^2}{2pMT^{2H}}\\
\geq \sum_{i=1}^p\frac{y^2_{J[i]}}{2|t_{J[i]}-t_{J[i]-1}|^{2H}}+\frac{a^2}{8pMT^{2H}}.
\end{eqnarray*}
This leads to a bound
\begin{eqnarray*}
&&\left|\mathbb{E}\left[F\left(B_{t_1}-a,\ldots, B_{t_{2p+q}}-a\right)\right]\right|\\
&\leq&\frac{1}{\left(2\pi\right)^\frac{p+2}{2}\prod_{i=1}^{p+q}|t_i-t_{i-1}|^H}\int_{\mathbb{R}^{p+q}}e^{-D'a^2-D'\sum_{i=1}^{p}\frac{y^2_{J[i]}}{2|t_{J[i]}-t_{J[i]-1}|^{2H}}}\\
&&\times \left|F\left(y_1,\ldots,y_1+\sum_{k=2}^iy_k,\ldots,y_1+\sum_{k=2}^{p+q}y_k\right)\right|dy_1\ldots dy_{p+q},
\end{eqnarray*}
with $D'=\frac{D}{8pMT^{2H}}\wedge \frac{1}{4T^{2H}}$. This concludes the proof of \eqref{Basic}.
\end{proof}

\begin{proof}[Proof\nopunct]\textit{of \eqref{complete}:}
Recall that $1\in \bar J$. We can now write, using again the change of variables $y_1\rightarrow y_1-a$,

\begin{eqnarray*}
&&\mathbb{E}[F(B_{t_1}-a,\ldots, B_{t_{p+q}}-a)]\\
&=&\frac{1}{(2\pi)^\frac{p+q}{2}\sqrt{det(\Sigma)}}\int_{\mathbb{R}^{2p+q}}\exp\left({-\frac{\sum_{i=1}^p y^2_{J[i]}\Sigma^{-1}_{J[i],J[i]}}{2}}\right)\\
&&\times\exp\left({-\frac{\sum_{i,j=1}^{p+q} y_iy_j\Sigma^{-1}_{i,j}}{2}}\mathbb{I}_{(i,j)\in\mathcal D}\right)\\
&&\times F\left(y_1-a,\ldots,y_1-a+\sum_{k=2}^iy_k,\ldots,y_1-a+\sum_{k=2}^{p+q}y_k\right)dy_1\ldots dy_{p+q}\\
&=&\frac{1}{(2\pi)^\frac{p+q}{2}\sqrt{det(\Sigma)}}\int_{\mathbb{R}^{p+q}}\exp\left({-\frac{\sum_{i=1}^p y^2_{J[i]}\Sigma^{-1}_{J[i],J[i]}}{2}}\right)\\
&&\times\exp\left({-\frac{\sum_{i,j=1}^{p+q} (y+A)_i(y+A)_j\Sigma^{-1}_{i,j}}{2}}\mathbb{I}_{(i,j)\in\mathcal D}\right)\\
&&\times F\left(y_1,\ldots,y_1+\sum_{k=2}^iy_k,\ldots,y_1+\sum_{k=2}^{p+q}y_k\right)dy_1\ldots dy_{p+q}\\
\end{eqnarray*}
where the notation $\mathcal D$ is introduced in \eqref{D}.

 From the first item in Lemma \ref{decompDet}, we have
$$\frac{1}{\sqrt{det(\Sigma)}}=\frac{(1+\Theta(h))}{\sqrt{\det\left(\Sigma'\right)}\prod_{i=1}^p|t_{J^1_{\sigma}[i]}-t_{J^1_{\sigma}[i]-1}|^H}.$$
We then have 
\begin{eqnarray*}&&\mathbb{E}\left[F\left(B_{t_1}-a,\ldots, B_{t_{p+q}}-a\right)\right]\\
&=&\frac{d_{\Sigma'}(a,0,\ldots,0)}{(2\pi)^\frac{q}{2}\sqrt{det(\Sigma')}}\\
&&\times\int_{\mathbb{R}^{p+q}}\frac{\exp\left({-\sum_{i=1}^p\frac{ y^2_{J[i]}}{2|t_{J[i]}-t_{J[i]-1}|^{2H}}}\right)}{(2\pi)^\frac{p}{2}\prod_{i=1}^p|t_{J[i]}-t_{J[i]-1}|^H}\\
&&\times F\left(y_1,\ldots,y_1+\sum_{k=2}^iy_k,\ldots,y_1+\sum_{k=2}^{p+q}y_k\right)\\
&&\times \left(1+\Theta(h)\right)Rdy_1\ldots dy_{p+q}.
\end{eqnarray*}
Here
\begin{eqnarray*}
R&=&\exp\left(\frac{-(y+A)^T\Sigma^{-1}(y+A)+(u(y)+A')\Sigma'^{-1}(u(y)+A')}{2}\right.\\
&&\left.+\sum_{i=1}^p\frac{y^2_{J[i]}}{2|t_{J[i]}-t_{J[i]-1}|^{2H}}\right),
\end{eqnarray*}
where the notations $d_{M}$ has been introduced in \eqref{densityExp}, and $u(y)$ is the $q$-dimensional vector verifying 
$$\forall i\in\llbracket 1,q\rrbracket, ~u(y)_i:=y_{\bar J[i]}.$$
Hence we obtain
\begin{eqnarray*}
&&\left|\mathbb{E}[F(B_{t_1}-a,\ldots, B_{t_{p+q}}-a)]-\frac{d_{\Sigma'}(A')}{(2\pi)^\frac{q}{2}\sqrt{det(\Sigma')}}\right.\\
&&\left.\times\int_{\mathbb{R}^{p+q}}\mathbb{E}[F(\tilde B(y))]dy_1\ldots dy_{p+q}\right|\\
&\leq& e_1+e_2,
\end{eqnarray*}
with
\begin{eqnarray*}
e_1&:=&\frac{d_{\Sigma'}(A')}{(2\pi)^\frac{q}{2}\sqrt{det(\Sigma')}}\int_{\mathbb{R}^{p+q}}\left|1-R\right|\left|F\left(y_1,\ldots, y_{1}+\sum_{k=2}^{p+q}y_k\right)\right|\\
&&\times \frac{\exp\left({-\sum_{i=1}^p\frac{ y^2_{J[i]}}{2|t_{J[i]}-t_{J[i]-1}|^{2H}}}\right)}{(2\pi)^\frac{p}{2}\prod_{i=1}^p|t_{J[i]}-t_{J[i]-1}|^H}dy_{1}\ldots dy_{p+q},
\end{eqnarray*}
and
\begin{eqnarray*}
e_2&:=&\frac{d_{\Sigma'}(A')}{(2\pi)^\frac{q}{2}\sqrt{det(\Sigma')}}\Theta(h)\int_{\mathbb{R}^{p+q}}R\left|F\left(y_1,\ldots, y_{1}+\sum_{k=2}^{p+q}y_k\right)\right|\\
&&\times \frac{\exp\left({-\sum_{i=1}^p\frac{ y^2_{J[i]}}{2|t_{J[i]}-t_{J[i]-1}|^{2H}}}\right)}{(2\pi)^\frac{p}{2}\prod_{i=1}^p|t_{J[i]}-t_{J[i]-1}|^H}dy_{1}\ldots dy_{p+q}.
\end{eqnarray*}
We will only study the term $e_1$, while the bound for $e_2$ is easier and can be obtained similarly, leading to
\begin{eqnarray}\label{D21}e_2&\leq& \beta\theta(h)e^{-Da^2}\int_{\mathbb{R}^{p+q}}\left|F\left(y_1,\ldots, y_{1}+\sum_{k=2}^{p+q}y_k\right)\right|\notag\\
&&\times \exp\left(-D\sum_{i=1}^{p}\frac{y_{J[i]}^2}{|t_{J[i]}-t_{J[i]-1}|^{2H}}\right)dy_1\ldots dy_{p+q},\notag\\
\end{eqnarray}
for $\beta,D>0$ independent of $t_1,\ldots,t_{p+q}$. For the term $e_1$, notice that $R$ is written as $R=e^{R'}$ so thanks to Lemma \ref{trivial}, $|1-R|\leq |R'|e^{R'\vee 0}$. Thus
\begin{eqnarray*}
&&|1-R|\\&\leq&\left|\frac{1}{2}(y+A)^TM^{-1}(y+A)-\sum_{i=1}^{p}\frac{y_{J^1_{\sigma}[i]}}{2|t_{J^1_{\sigma}[i]}-t_{J^1_{\sigma}[i]-1}|^{2H}}\right.\\
&&\left.-\frac{1}{2}\left(u(y)+A'\right)M'^{-1}\left(u(y)+A'\right)\right|e^{R'\vee 0}
\end{eqnarray*}
We now distinguish two cases depending on whether $R'>0$ or $R'\leq 0$:

\begin{itemize}
\item if $R'<0$, then 
\begin{eqnarray*}
&&d_{\Sigma'}(A')\exp\left({-\sum_{i=1}^p\frac{ y^2_{J[i]}}{2|t_{J[i]}-t_{J[i]-1}|^{2H}}}\right)e^{R'\vee 0}\\
&=&\exp\left(-\frac{1}{2}\left(u(y)+A'\right)\Sigma'^{-1}\left(u(y)+A'\right)-\sum_{i=1}^{p}\frac{y^2_{J[i]}}{2|t_{J[i]}-t_{J[i]-1}|^{2H}}\right)\\
&\leq&\exp\left(-D\left(\sum_{i=1}^q\frac{\left(A'_{\bar J[i]}+y_{\bar J[i]}\right)^2}{2|t_{\bar J[i]}-t_{\bar J[i]-1}|^{2H}}+\sum_{i=1}^{p}\frac{y^2_{J[i]}}{2|t_{J[i]}-t_{J[i]-1}|^{2H}}\right)\right).
\end{eqnarray*}
where the last line comes from Lemma \ref{Diagonalisation}.

\item If $R'>0$, we similarly have
\begin{eqnarray*}
&&d_{\Sigma'}(A')\exp\left({-\sum_{i=1}^p\frac{ y_{J[i]}^2}{2|t_{J[i]}-t_{J[i]-1}|^{2H}}}\right)e^{R'\vee 0}\\
&\leq& \exp\left(-D\sum_{i=1}^{p+q}\frac{(y_i+A_i)^2}{|t_i-t_{i-1}|^{2H}}\right)\\
&\leq&\exp\left(-D\left(\sum_{i=1}^q\frac{\left(A'_{\bar J[i]}+y_{\bar J[i]}\right)^2}{|t_{\bar J[i]}-t_{\bar J[i]-1}|^{2H}}+\sum_{i=1}^{p}\frac{y_{J[i]}^2}{|t_{J[i]}-t_{J[i]-1}|^{2H}}\right)\right)
\end{eqnarray*}
giving us the same upper bound.
\end{itemize}
Next, exploiting $(H_1)$ and proceeding as in the proof of \eqref{Basic} above, we obtain that for some $D>0$, independent on the choice of $t_i$, 
\begin{eqnarray*}
&&\left|F\left(y_1,\ldots, y_{1}+\sum_{k=2}^{p+q}y_k\right)\right|d_{\Sigma'}(A')\exp\left({-\sum_{i=1}^p\frac{ y_{J[i]}^2}{2|t_{J[i]}-t_{J[i]-1}|^{2H}}}\right)e^{R'\vee 0}\\
&\leq&\left|F\left(y_1,\ldots, y_{1}+\sum_{k=2}^{p+q}y_k\right)\right|\exp\left(-Da^2-D\sum_{i=1}^p\frac{y^2_{J[i]}}{|t_{J[i]}-t_{J[i]-1}|^{2H}}\right).
\end{eqnarray*}
It remains to bound the term $R'$. Exploiting the items (b.), (c.) and (d.) in Lemma \ref{decompDet}, we have 
\begin{eqnarray*}
&&\left|\frac{1}{2}(y+A)^T\Sigma^{-1}(y+A)-\sum_{i=1}^{p}\frac{y^2_{J^1_{\sigma}[i]}}{2|t_{J^1_{\sigma}[i]}-t_{J^1_{\sigma}[i]-1}|^{2H}}\right.\\
&&\left.-\frac{1}{2}\left(u(y)+A'\right)\Sigma'^{-1}\left(u(y)+A'\right)\right|\\
&\leq&\frac{1}{2}\sum_{i=1}^{p+q}|(y_i+A_i)(y_j+A_j)||\Sigma^{-1}_{ij}|\mathbb{I}_{(i,j)\in\mathcal D}\\
&&+\frac{1}{2}\sum_{i\neq j=1}^p|y_{J[i]}y_{J[j]}||\Sigma^{-1}_{J[i],J[j]}|+\Theta(h)\frac{1}{2}\sum_{i=1}^{p}|y_{J[i]}|^2\frac{1}{|t_{J[i]}-t_{J[i]-1}|^{2H}},
\end{eqnarray*}
with
\begin{eqnarray*}&&|\Sigma^{-1}_{ij}|\mathbb{I}_{(i,j)\in\mathcal D}\leq\frac{\beta}{\max\left(|t_i-t_{i-1}||t_j-t_{j-1}|^{2H-1}+|t_j-t_{j-1}||t_i-t_{i-1}|^{2H-1}\right)},\\
&&|\Sigma^{-1}_{J[i],J[j]}|\leq\Theta(h)\frac{1}{|t_{J[i]}-t_{J[j]-1}|^H|t_{J[i]}-t_{J[i]-1}|^H}.
\end{eqnarray*}
For the term
$$
\frac{1}{2}\sum_{i\neq j=1}^p|y_{J[i]}y_{J[j]}||\Sigma^{-1}_{J[i],J[j]}|+\Theta(h)\frac{1}{2}\sum_{i=1}^{p}|y_{J[i]}|^2\frac{1}{|t_{J[i]}-t_{J[i]-1}|^{2H}},
$$
we use identities $|ab|\leq\frac{a^2+b^2}{2}$ and $a^2e^{-a^2}\leq e^{\frac{-a^2}{2}}$ and obtain that there exists two constants $\beta,D>0$, independent of the choice of $t_i$, such that 
\begin{eqnarray}\label{D11}
&&\left|F\left(y_1,\ldots, y_{1}+\sum_{k=2}^{p+q}y_k\right)\right|d_{\Sigma'}(A')\exp\left({-\sum_{i=1}^p\frac{ y_{J[i]}^2}{2|t_{J[i]}-t_{J[i]-1}|^{2H}}}\right)e^{R'\vee 0}\notag\\
&&\times \left(\frac{1}{2}\sum_{i\neq j=1}^p|y_{J[i]}y_{J[j]}||\Sigma^{-1}_{J[i],J[j]}|+\Theta(h)\frac{1}{2}\sum_{i=1}^{p}|y_{J[i]}|^2\frac{1}{|t_{J[i]}-t_{J[i]-1}|^{2H}}\right)\notag\\
&\leq&\Theta(h)e^{-Da^2-D\sum_{i=1}^p\frac{y^2_{J[i]}}{|t_{J[i]}-t_{J[i]-1}|^{2H}}}.\notag\\
\end{eqnarray}
For the term 
$$
\frac{1}{2}\sum_{i=1}^{p+q}|(y_i+A_i)(y_j+A_j)||\Sigma^{-1}_{ij}|\mathbb{I}_{(i,j)\in\mathcal D},
$$
there are three cases to distinguish: $(i,j)\in\bar J\times\bar J$,  $(i,j)\in\bar J\times J$, and $(i,j)\in J\times\bar J$.
 If $(i,j)\in J\times\bar J$, then for all $l\in\llbracket 1,p\rrbracket$ it holds that
$$|\Sigma^{-1}_{ij}|\leq \frac{\beta}{|t_j-t_{j-1}||t_i-t_{i-1}|^{2H-1}}\leq\frac{\beta}{h}\frac{1}{|t_i-t_{i-1}|^H|t_{J[l]}-t_{J[l]-1}|^H}.$$
Then, using also $(H_1)$, we get 
\begin{eqnarray}\label{N1}
&&\left|F\left(y_1,\ldots, y_{1}+\sum_{k=2}^{p+q}y_k\right)\right||\Sigma^{-1}_{i,j}||y_{i}y_j|\notag\\
&\leq&h\left|F\left(y_1,\ldots, y_{1}+\sum_{k=2}^{p+q}y_k\right)\right|\sum_{l=1}^pM\beta\frac{|y_iy_{J[l]}|}{|t_i-t_{i-1}|^H|t_{J[l]}-t_{J[l]-1}|^H}.\notag\\
\end{eqnarray}
The case $(i,j)\in \bar J\times J$ can be treated with a symmetric argument, by intechanging the roles of $i$ and $j$. Finally, when $(i,j)\in\bar J\times\bar J$ we obtain
\begin{eqnarray}\label{N2}
&&\left|F\left(y_1,\ldots, y_{1}+\sum_{k=2}^{p+q}y_k\right)\right||\Sigma^{-1}_{i,j}||y_{i}y_j|\notag\\
&\leq&h^{2H}\left|F\left(y_1,\ldots, y_{1}+\sum_{k=2}^{p+q}y_k\right)\right|\sum_{l,r=1}^pM^2\beta\frac{|y_iy_{J[l]}|}{|t_{J[r]}-t_{J[r]-1}|^H|t_{J[l]}-t_{J[l]-1}|^H}.\notag\\
\end{eqnarray}
To conclude, we have obtained 
\begin{eqnarray}
&&\left|F\left(y_1,\ldots, y_{1}+\sum_{k=2}^{p+q}y_k\right)\right|d_{\Sigma'}(A')\exp\left({-\sum_{i=1}^p\frac{ y_{J[i]}^2}{2|t_{J[i]}-t_{J[i]-1}|^{2H}}}\right)e^{R'\vee 0}\notag\\
&&\times \frac{1}{2}\sum_{i, j=1}^{p+q}|y_{i}y_{j}||\Sigma^{-1}_{ij}|\mathbb{I}_{(i,j)\in\mathcal D}\notag\\
&\leq&\beta(h+h^{2H})e^{-Da^2-D\sum_{i=1}^p\frac{y^2_{J[i]}}{|t_{J[i]}-t_{J[i]-1}|^{2H}}}.\label{D12}
\end{eqnarray}
for some $D,\beta>0$. Now combining \eqref{D21}, \eqref{D11}, \eqref{D12} together with the fact that $h,h^{2H}\leq h^{2-2H}$ if $H>\frac{1}{2}$ and $h\in (0,1)$ yields the desired result \eqref{complete}. This completes the proof.
\end{proof}

\appendix

\section{Some technical results}
In this section, we prove three lemmas which are useful for the proof of Theorem \ref{main}.

\begin{lemma}\label{calculus1}
Let $\theta^2>0$ and $X\sim\mathcal N(0,\theta^2)$. Then, for all $\alpha>0$,
$$\mathbb{E}\left[\int_{\mathbb{R}}|y+X-\alpha|\mathbb{I}_{sgn(y+X-\alpha)\neq sgn(y-\alpha)}dy\right]=\frac{1}{2}\theta^2.$$
\end{lemma}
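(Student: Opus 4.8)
The plan is to reduce the assertion to a deterministic one–dimensional integral by conditioning on the value of $X$. Integrating against the Gaussian law of $X$ and applying Tonelli's theorem (the integrand is nonnegative, so no integrability issue arises and the order of integration may be swapped freely), the left–hand side equals $\mathbb{E}[g(X)]$, where
$$
g(x):=\int_{\mathbb{R}}|y+x-\alpha|\,\mathbb{I}_{\{sgn(y+x-\alpha)\neq sgn(y-\alpha)\}}\,dy .
$$
The first step is to simplify $g$. Performing the translation $z=y-\alpha$ eliminates the parameter $\alpha$ altogether and gives
$$
g(x)=\int_{\mathbb{R}}|z+x|\,\mathbb{I}_{\{sgn(z+x)\neq sgn(z)\}}\,dz ;
$$
in particular the identity will hold for every $\alpha\in\mathbb{R}$, not only for $\alpha>0$.

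Next I would evaluate $g(x)$ by a short case analysis on the sign of $x$ (the set on which $z$ or $z+x$ vanishes is Lebesgue–null and can be discarded). If $x>0$, then $z$ and $z+x$ have opposite signs exactly when $-x<z<0$, on which interval $|z+x|=z+x$, so $g(x)=\int_{-x}^{0}(z+x)\,dz=\tfrac{x^{2}}{2}$. If $x<0$, the opposite–sign region is $0<z<-x$, where $|z+x|=-(z+x)$, and one checks again $g(x)=\int_{0}^{-x}\bigl(-(z+x)\bigr)\,dz=\tfrac{x^{2}}{2}$; the case $x=0$ is trivial. Hence $g(x)=\tfrac{x^{2}}{2}$ for all $x\in\mathbb{R}$.

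Finally, substituting back yields $\mathbb{E}[g(X)]=\tfrac12\,\mathbb{E}[X^{2}]=\tfrac{\theta^{2}}{2}$, which is the claim. The only point demanding a little care is the bookkeeping in the case analysis for $g(x)$: correctly identifying the endpoints of the ``opposite–sign'' interval and the sign of $|z+x|$ on it. Everything else — the reduction to $\mathbb{E}[g(X)]$ via nonnegativity, and the final moment computation — is routine.
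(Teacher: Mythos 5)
Your proof is correct. It differs from the paper's in the order in which the two integrations are carried out: after the common translation $z=y-\alpha$, the paper symmetrises the two sign-crossing events, applies Fubini to write the surviving term as a double integral against the Gaussian density, and evaluates it through the identities $\theta^2\mathbb{P}[X>0]-\tfrac12\mathbb{E}[X^2\mathbb{I}_{X>0}]=\tfrac14\theta^2$; you instead freeze $X=x$, compute the inner integral in closed form as $g(x)=\tfrac{x^2}{2}$ by locating the opposite-sign interval ($(-x,0)$ for $x>0$, $(0,-x)$ for $x<0$), and only then take the expectation. Your route is the more elementary of the two and exposes something the paper's computation hides: the Gaussian law plays no role beyond its second moment, so the identity holds for any centered $X$ with $\mathbb{E}[X^2]=\theta^2$, and for every $\alpha\in\mathbb{R}$ rather than only $\alpha>0$. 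The paper's version, by contrast, keeps the density explicit throughout, which costs a slightly longer computation but stays closer in style to the Gaussian estimates used elsewhere in Section 3. Both arguments rest on Tonelli applied to a nonnegative integrand, which you correctly flag, and your case bookkeeping (endpoints of the crossing interval and the sign of $|z+x|$ on it) checks out.
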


\begin{proof}
We do the change of variable $y\rightarrow y-\alpha$ to obtain
\begin{eqnarray*}
&&\mathbb{E}\left[\int_{\mathbb{R}}|y+X-\alpha|\mathbb{I}_{sgn(y+X-\alpha)\neq sgn(y-\alpha)}dy\right]\\
&=&\mathbb{E}\left[\int_{\mathbb{R}}|y+X|\mathbb{I}_{(y+X)>0,y<0}dy\right]+\mathbb{E}\left[\int_{\mathbb{R}}|y+X|\mathbb{I}_{(y+X)<0,y>0}dy\right]\\
&=&2\mathbb{E}\left[\int_{\mathbb{R}}|y+X|\mathbb{I}_{(y+X)>0,y<0}dy\right].
\end{eqnarray*}
Here, by using also Fubini's theorem,
\begin{eqnarray*}
&&\mathbb{E}\left[\int_{\mathbb{R}}|y+X|\mathbb{I}_{(y+X)>0,y<0}dy\right]\\
&=&\frac{1}{\sqrt{2\pi}\theta}\int_{-\infty}^0\int_{-y}^{\infty}(x+y)e^{-\frac{x^2}{2\theta^2}}dxdy\\
&=&\frac{1}{\sqrt{2\pi}\theta}\left(\int_{0}^{\infty}\int_y^{\infty}xe^{-\frac{x^2}{2\theta^2}}dxdy-\int_0^{\infty}e^{-\frac{x^2}{2\theta^2}}\int_0^xydydx\right)\\
&=&\frac{1}{\sqrt{2\pi}\theta}\left(\int_0^{\infty}\theta^2e^{-\frac{y^2}{2\theta^2}}dy-\frac{1}{2}\int_0^{\infty}x^2e^{-\frac{x^2}{2\theta^2}}dx\right)\\
&=&\theta^2\mathbb{P}[X>0]-\frac{1}{2}\mathbb{E}[X^2\mathbb{I}_{X>0}]=\frac{1}{4}\theta^2,
\end{eqnarray*}
which concludes the proof.
\end{proof}

\begin{lemma}\label{calculus2}
Let $X_1,X_2$ be independent centered Gaussian variables with variances $\theta_1^2$ and $\theta_2^2$. Then, for all $\alpha>0$, we have 
$$\mathbb{E}\left[\int_{\mathbb{R}}\left|\mathbb{I}_{y>\alpha}-\mathbb{I}_{y+X_2>\alpha}\right|dy\right]\leq |\theta_2|$$
and 
$$\mathbb{E}\left[\int_{\mathbb{R}^2}\left|\left(\mathbb{I}_{y_1>\alpha}-\mathbb{I}_{y_1+X_1>\alpha}\right)\left(\mathbb{I}_{y_1+X_1+y_2>\alpha}-\mathbb{I}_{y_1+X_1+y_2+X_2>\alpha}\right)\right|dy_2dy_1\right]\leq |\theta_1\theta_2|.$$
\end{lemma}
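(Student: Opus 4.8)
The plan is to integrate out the spatial variables first, \emph{conditionally} on the Gaussian vector, and only then take the expectation; since every integrand is nonnegative, Tonelli's theorem legitimises interchanging $\mathbb{E}$ with the Lebesgue integrals in $y$. The whole computation rests on the elementary observation that for any real numbers $a$ and $b$,
$$\int_{\mathbb{R}}\left|\mathbb{I}_{y>b}-\mathbb{I}_{y+a>b}\right|dy=|a|,$$
because $\mathbb{I}_{y+a>b}=\mathbb{I}_{y>b-a}$, so the two indicators disagree precisely on the interval with endpoints $b$ and $b-a$, whose length is $|a|$.

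For the first inequality, applying this identity with $b=\alpha$ and $a=X_2$ gives, for each fixed value of $X_2$, $\int_{\mathbb{R}}|\mathbb{I}_{y>\alpha}-\mathbb{I}_{y+X_2>\alpha}|\,dy=|X_2|$; hence, by Tonelli,
$$\mathbb{E}\left[\int_{\mathbb{R}}\left|\mathbb{I}_{y>\alpha}-\mathbb{I}_{y+X_2>\alpha}\right|dy\right]=\mathbb{E}[|X_2|]=\sqrt{\tfrac{2}{\pi}}\,|\theta_2|\leq|\theta_2|.$$

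For the second inequality, fix $X_1,X_2$. The first factor $|\mathbb{I}_{y_1>\alpha}-\mathbb{I}_{y_1+X_1>\alpha}|$ equals $1$ exactly when $y_1$ lies in the interval $I(X_1)$ with endpoints $\alpha$ and $\alpha-X_1$, which has length $|X_1|$. For $y_1,X_1$ frozen, writing $c=y_1+X_1$ and applying the elementary identity with $b=\alpha-c$ and $a=X_2$ gives $\int_{\mathbb{R}}|\mathbb{I}_{y_1+X_1+y_2>\alpha}-\mathbb{I}_{y_1+X_1+y_2+X_2>\alpha}|\,dy_2=|X_2|$, a quantity independent of $y_1$. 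Therefore the inner double integral equals $\int_{I(X_1)}|X_2|\,dy_1=|X_1|\,|X_2|$, and taking expectations and using the independence of $X_1$ and $X_2$,
$$\mathbb{E}\left[\int_{\mathbb{R}^2}\left|\left(\mathbb{I}_{y_1>\alpha}-\mathbb{I}_{y_1+X_1>\alpha}\right)\left(\mathbb{I}_{y_1+X_1+y_2>\alpha}-\mathbb{I}_{y_1+X_1+y_2+X_2>\alpha}\right)\right|dy_2\,dy_1\right]=\mathbb{E}[|X_1|]\,\mathbb{E}[|X_2|]=\tfrac{2}{\pi}|\theta_1\theta_2|\leq|\theta_1\theta_2|.$$

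The argument is essentially a bookkeeping exercise, so there is no serious obstacle; the only points that need a modicum of care are the identification, for the second factor, of the set of $y_2$ on which the two indicators disagree as an interval of length $|X_2|$ not depending on $y_1$, and the (routine) appeal to Tonelli's theorem to exchange the expectation with the spatial integrals, which is valid here only because all integrands are nonnegative.
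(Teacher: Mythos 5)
Your proof is correct and follows essentially the same route as the paper: both arguments reduce everything, via Tonelli, to the observation that the two indicators disagree on an interval of length $|X_i|$, and then integrate out the spatial variables before taking expectations (the paper merely phrases this as conditioning on $X_1$ and reusing the one-variable bound). Your pathwise computation is if anything a little cleaner, since it yields the exact values $\sqrt{2/\pi}\,|\theta_2|$ and $\tfrac{2}{\pi}|\theta_1\theta_2|$ rather than only the stated upper bounds.
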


\begin{proof}
Performing change of variable $y_1\rightarrow y_1-\alpha$ and conditioning with respect to $X_1$ we get
\begin{eqnarray*}
&&\mathbb{E}\left[\int_{\mathbb{R}^2}\left|\left(\mathbb{I}_{y_1>\alpha}-\mathbb{I}_{y_1+X_1>\alpha}\right)\left(\mathbb{I}_{y_1+X_1+y_2>\alpha}-\mathbb{I}_{y_1+X_1+y_2+X_2>\alpha}\right)\right|dy_2dy_1\right]\\
&\leq&\mathbb{E}\left[\int_{\mathbb{R}^2}\left|\mathbb{I}_{y_1>0}-\mathbb{I}_{y_1+X_1>0}\right|\mathbb{E}\left[\left|\mathbb{I}_{y_2>-(y_1+X_1)}-\mathbb{I}_{y_2+X_2>-(y_1+X_1)}\right||X_1\right]dy_2dy_1\right]\\
\end{eqnarray*}

Proceeding as in Lemma \ref{calculus1}, we can see that for all $\beta>0$, 
$$\int_{\mathbb{R}}\mathbb{E}\left[|\mathbb{I}_{y>\beta}-\mathbb{I}_{y+X_2>\beta}|\right]dy\leq|\theta_2|,$$
which prove the first part of the statement.

 Then,
\begin{eqnarray*}
&&\left|\mathbb{E}\left[\int_{\mathbb{R}^2}\left(\mathbb{I}_{y_1>\alpha}-\mathbb{I}_{y_1+X_1>\alpha}\right)\left(\mathbb{I}_{y_1+X_1+y_2>\alpha}-\mathbb{I}_{y_1+X_1+y_2+X_2>\alpha}\right)dy_2dy_1\right]\right|\\
&\leq&\mathbb{E}\left[\int_{\mathbb{R}}\left|\mathbb{I}_{y_1>0}-\mathbb{I}_{y_1+X_1>0}\right||\theta_2|dy_1\right]\\
&\leq&|\theta_1\theta_2|,
\end{eqnarray*}
which concludes the proof.
\end{proof}

\begin{lemma}\label{calculus3}
Let $\phi$ be as in \eqref{density} and $a\in\mathbb{R}$. There is a constant $\beta>0$ such that we have for all $n\in\mathbb{N}^*$,
\begin{equation}\label{ApproxU}\int_{[0,T]^2}\left|\phi_{u,v}(a,a)-\phi_{u_n,v}(a,a)\right|\mathbb{I}_{(u,v)\in\mathcal C}dudv\leq \beta e^{-\frac{a^2}{4T^{2H}}}n^{1-H},\end{equation}
\begin{equation}\label{ApproxUV}\int_{[0,T]^2}\left|\phi_{u,v}(a,a)-\phi_{u_n,v_n}(a,a)\right|\mathbb{I}_{(u,v)\in\mathcal C}dudv\leq \beta e^{-\frac{a^2}{4T^{2H}}}n^{1-H}
\end{equation}
where $\mathcal{C}=\{(u,v)\in [0,T]^2, \min(u,v,|u-v|)>\frac{2}{n}\}$.
\end{lemma}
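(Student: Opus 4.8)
\emph{Proof proposal.} The plan is to show that the crude bound $|\phi_{u,v}(a,a)-\phi_{u_n,v}(a,a)|\le\phi_{u,v}(a,a)+\phi_{u_n,v}(a,a)$ (and its analogue with $v_n$) already suffices: after pulling a Gaussian factor out of the density and checking that on $\mathcal{C}$ the discretised times stay comparable to the originals, the right-hand side integrates to a finite constant over $[0,T]^2$, and a constant is automatically $\le\beta e^{-a^2/(4T^{2H})}n^{1-H}$ since $1-H>0$ and $n\ge 1$.

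First I would rewrite \eqref{density} on the diagonal: since $\mathbb{E}[(B_u-B_v)^2]=|u-v|^{2H}$, one gets $\phi_{u,v}(a,a)=(2\pi\sqrt{\det\Sigma_{u,v}})^{-1}\exp\bigl(-a^2|u-v|^{2H}/(2\det\Sigma_{u,v})\bigr)$. Because $\det\Sigma_{u,v}$ equals the determinant of the covariance matrix of the non-overlapping increments $\bigl(B_{u\wedge v},\,B_{u\vee v}-B_{u\wedge v}\bigr)$, the bounds \eqref{Sandwich}--\eqref{Sandwich2} with $m=2$ give $k_H(u\wedge v)^{2H}|u-v|^{2H}\le\det\Sigma_{u,v}\le 2(u\wedge v)^{2H}|u-v|^{2H}$ on $[0,T]^2$. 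Hence $a^2|u-v|^{2H}/(2\det\Sigma_{u,v})\ge a^2/\bigl(4(u\wedge v)^{2H}\bigr)\ge a^2/(4T^{2H})$, and therefore $\phi_{u,v}(a,a)\le C_H\,e^{-a^2/(4T^{2H})}(u\wedge v)^{-H}|u-v|^{-H}$ with $C_H=(2\pi\sqrt{k_H})^{-1}$.

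Next I would observe that on $\mathcal{C}$ discretisation is harmless: there $u,v>2/n$ and $|u-v|>2/n$, so $0\le u-u_n<1/n<\tfrac12\min(u\wedge v,|u-v|)$ (and likewise for $v-v_n$), which forces $u_n\wedge v,\ u_n\wedge v_n\ge\tfrac12(u\wedge v)$ and $|u_n-v|,\ |u_n-v_n|\ge\tfrac12|u-v|$. Inserting these into the bound above shows that on $\mathcal{C}$ both $\phi_{u_n,v}(a,a)$ and $\phi_{u_n,v_n}(a,a)$ are at most $2^{2H}C_H\,e^{-a^2/(4T^{2H})}(u\wedge v)^{-H}|u-v|^{-H}$. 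Combining with the triangle inequality, on $\mathcal{C}$ both $|\phi_{u,v}(a,a)-\phi_{u_n,v}(a,a)|$ and $|\phi_{u,v}(a,a)-\phi_{u_n,v_n}(a,a)|$ are bounded by $(1+2^{2H})C_H\,e^{-a^2/(4T^{2H})}(u\wedge v)^{-H}|u-v|^{-H}$. Integrating over $\mathcal{C}\subset[0,T]^2$ and using that $\int_{[0,T]^2}(u\wedge v)^{-H}|u-v|^{-H}\,du\,dv<\infty$ — which follows by restricting to $\{u<v\}$ and substituting $u=vt$ in the inner integral, the resulting integral being finite precisely because $H<1$ — bounds the left-hand sides of \eqref{ApproxU} and \eqref{ApproxUV} by a constant times $e^{-a^2/(4T^{2H})}$, hence by $\beta\,e^{-a^2/(4T^{2H})}n^{1-H}$ for every $n\ge 1$.

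There is no essential obstacle here; the only points to get right are extracting the Gaussian weight $e^{-a^2/(4T^{2H})}$ from the exponent via the two-sided determinant estimates \eqref{Sandwich}--\eqref{Sandwich2}, and noticing that the defining constraint $\min(u,v,|u-v|)>2/n$ of $\mathcal{C}$ is exactly what keeps $u_n\asymp u$, $|u_n-v|\asymp|u-v|$, and so on, so that the finite double integral of $(u\wedge v)^{-H}|u-v|^{-H}$ controls everything — the power $n^{1-H}$ on the right is generous slack (in fact a bound uniform in $n$ is available).
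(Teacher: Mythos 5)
There is a genuine gap here, though it is partly masked by a sign typo in the statement itself. Your argument only produces a bound that is \emph{uniform in $n$}: you bound $|\phi_{u,v}(a,a)-\phi_{u_n,v}(a,a)|$ by the sum of the two densities and integrate, never using the fact that $|u-u_n|\le 1/n$. You then observe that a constant is $\le \beta e^{-a^2/(4T^{2H})}n^{1-H}$ because $n^{1-H}\ge 1$. But the exponent $n^{1-H}$ in the displayed statement is evidently a misprint for $n^{H-1}=n^{-(1-H)}$: look at how the lemma is invoked in the estimate of $R^{1,n}$ in Step~2 of the proof of Theorem \ref{main}, where it contributes the term $\beta e^{-Da^2}n^{H-1}$, and recall that the whole point is to show $\big|\mathbb{E}[S_n(t)L_t(a)]-\tfrac12\mathbb{E}[L_t(a)^2]\big|=O(n^{-(1-H)})$ so that the $L^2$-error in \eqref{mainEq} is $O(n^{-\frac{1-H}{2}})$. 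A bound on \eqref{ApproxU}--\eqref{ApproxUV} that does not decay in $n$ is useless for that purpose: with your estimate, $R^{1,n}$ would not tend to $0$ and the main theorem would not follow. So the "generous slack" you point to is not slack at all; it is the content of the lemma.

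What is actually needed, and what the paper's proof supplies, is a modulus-of-continuity estimate for $(u,v)\mapsto\phi_{u,v}(a,a)$ in the first variable. Concretely: writing $d_{u,v}$ for the determinant, Lemma \ref{estimates} gives $|d_{u,v}-d_{u_n,v}|\lesssim n^{-1}\big((u\wedge v)^{2H-1}|u-v|^{2H}+(u\wedge v)^{2H}|u-v|^{2H-1}\big)$; the $\tfrac12$-H\"older continuity of the square root then yields $\big|d_{u,v}^{-1/2}-d_{u_n,v}^{-1/2}\big|\lesssim n^{-1/2}\big((u\wedge v)^{-H}|u-v|^{-H-\frac12}+(u\wedge v)^{-H-\frac12}|u-v|^{-H}\big)$ on $\mathcal C$ (where $u_n\asymp u$ and $|u_n-v|\asymp|u-v|$, as you correctly noted), and a companion estimate controls the difference of the exponential factors with an extra term of order $n^{-H}|u-v|^{-H}$. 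Integrating these singular bounds over $\mathcal C$, where $|u-v|>2/n$, produces $n^{-1/2}\cdot n^{H-\frac12}=n^{H-1}$ and $n^{-H}\cdot n^{2H-1}=n^{H-1}$ (since $H>\tfrac12$ makes the exponents $H+\tfrac12$ and $2H$ supercritical near the diagonal). None of this appears in your proposal, and without it the decay in $n$ cannot be obtained. Your preliminary computations (the diagonal form of the density, the two-sided determinant bounds via \eqref{Sandwich}--\eqref{Sandwich2}, the extraction of $e^{-a^2/(4T^{2H})}$, and the comparability of $u_n,v_n$ with $u,v$ on $\mathcal C$) are all correct and reusable, but they are only the starting point.
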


\begin{proof} We will only do the proof of \eqref{ApproxU} since the proof of \eqref{ApproxUV} is similar.
 Let us write $d_{u,v}=det(\Sigma(\min(u,v),(\max(u,v)-\min(u,v))))$. By local non-determinism, we have that
 $|d_{u,v}|\geq k_H\min(u,v)^{2H}|u-v|^{2H}$.
 Notice also that since $(u,v)\in\mathcal{C}$, $\frac{|u_n\wedge v|^{2H}(|u-v|\wedge |u-n-v|)^{2H}}{|u\wedge v|^{2H}(|u-v|\vee |u-n-v|)}\leq 4^{2H}$.
 We have
\begin{eqnarray*}
&&\int_{\mathcal{C}}\left|\phi_{u,v}(a,a)-\phi_{u_n,v}(a,a)\right|dudv\\
&\leq&\int_{\mathcal C}\left(e^{-\frac{a^2}{4T^{2H}}}\left|\frac{1}{\sqrt{d_{u,v}}}-\frac{1}{\sqrt{d_{u_n,v}}}\right|\right.\\
&&\left.+\frac{4^H}{k_H\min(u,v)^{H}|u-v|)^{H}}\left(e^{-\frac{a^2|u-v|^{2H}}{4d_{u,v}}}-e^{-\frac{a^2|u_n-v|^{2H}}{4d_{u_n,v}}}\right)\right)dudv.
\end{eqnarray*}

Exploiting Lemma \ref{estimates}, we can see that the function $(u,v)\rightarrow d_{u,v}$ verifies, for $x_1\leq x_2$,
\begin{eqnarray*}|d_{x_1,v}-d_{x_2,v}|&\leq& K|x_1-x_2|\left((x_1\wedge v)^{2H-1}(|v-x_1|\vee|v-x_2|)^{2H}\right.\\
&&\left.+(x_1\wedge v)^{2H}(|v-x_1|\vee|v-x_2|)^{2H-1}\right).
\end{eqnarray*}
Consequently, since the square root function is $\frac{1}{2}$-H\"older continuous, one has
\begin{eqnarray*}
&&\left|\frac{1}{\sqrt{d_{u,v}}}-\frac{1}{\sqrt{d_{u_n,v}}}\right|\\
&\leq&\frac{K}{\sqrt{n}}\frac{(u_n\wedge v)^{H-\frac{1}{2}}(|u-v|\vee|u_n-v|^{H}+(u_n\wedge v)^{H}(|u-v|\vee|u_n-v|^{H-\frac{1}{2}})}{k_H(u_n\wedge v)^{2H}(|u-v|\wedge|u_n-v|)^{2H}}\\
&\leq& \frac{4^HK}{\sqrt{n}}\left(\frac{1}{\min(u,v)^H|u-v|^{H+\frac{1}{2}}}+\frac{1}{\min(u,v)^{H+\frac{1}{2}}|u-v|^{H}}\right).
\end{eqnarray*}

Moreover, we have 
\begin{eqnarray*}
&&\left|e^{-\frac{a^2|u-v|^{2H}}{4d_{u,v}}}-e^{-\frac{a^2|u_n-v|^{2H}}{4d_{u_n,v}}}\right|\\
&=&\left|e^{-\frac{|a||u-v|^{H}}{2\sqrt{d_{u,v}}}}-e^{-\frac{|a||u_n-v|^{H}}{2\sqrt{d_{u_n,v}}}}\right|\left|e^{-\frac{a|u-v|^{H}}{\sqrt{d_{u,v}}}}+e^{-\frac{|a||u_n-v|^{H}}{2\sqrt{d_{u_n,v}}}}\right|\\
&\leq&2\left|\frac{|a||u-v|^H}{2\sqrt{d_{u,v}}}-\frac{|a||u_n-v|^{H}}{2\sqrt{d_{u_n,v}}}\right|\frac{K_H(u\wedge v)^H|u-v|^H}{|a||u-v|}\\
&\leq&K\left|\frac{1}{\sqrt{d_{u,v}}}-\frac{1}{\sqrt{d_{u_n,v}}}\right|+K\frac{n^{-H}}{|u-v|^H},
\end{eqnarray*}
where we exploited the fact that $\forall x>0, e^{-x}\leq\frac{1}{x}$ and the fact that\\ $d_{u,v}\leq K_H(u\wedge v)^{2H}|u-v|^{2H}$.

 Finally,
\begin{eqnarray*}
&&\int_{[0,T]^2}\left|\phi_{u,v}(a,a)-\phi_{u_n,v}(a,a)\right|\mathbb{I}_{(u,v)\in\mathcal C}dudv\\
&\leq&e^{-\frac{a^2}{4T^{2H}}}\int_{\mathcal C}\left(\frac{4^HK}{\sqrt{n}}\left(\frac{1}{\min(u,v)^H|u-v|^{H+\frac{1}{2}}}+\frac{1}{\min(u,v)^{H+\frac{1}{2}}|u-v|^{H}}\right)\right.\\
&&\left.+\frac{1}{n^H\min(u,v)^H|u-v|^{2H}}\right)dudv\\
&\leq&Kn^{1-H}e^{-\frac{a^2}{4T^{2H}}}.
\end{eqnarray*}

\end{proof}
\end{document}